\documentclass[12pt]{amsart}
\usepackage{geometry} 
\usepackage{amsmath,amssymb,amsthm,verbatim}
\usepackage{hyperref}
\usepackage{color}
\usepackage{stmaryrd}
\usepackage{bm}

\allowdisplaybreaks

\newtheorem{theorem}[subsection]{Theorem}
\newtheorem{lemma}[subsection]{Lemma}
\newtheorem{cor}[subsection]{Corollary}

\theoremstyle{definition}

\newtheorem{remark}[subsection]{Remark}
\newtheorem{example}[subsection]{Example}
\newtheorem{question}[section]{Question}

\newcommand{\dist}{\mathrm{dist}}

\newcommand{\spt}{\mathrm{spt}}

\newcommand{\reg}{\mathrm{reg}}

\newcommand{\graph}{\mathrm{graph}}
\newcommand{\tr}{\mathrm{tr}}

\newcommand{\eps}{\epsilon}

\newcommand{\sing}{\mathrm{sing}}

\newcommand{\R}{\mathbb{R}}
\newcommand{\bC}{\mathbf{C}}
\newcommand{\C}{\mathbb{C}}
\newcommand{\N}{\mathbb{N}}
\newcommand{\Z}{\mathbb{Z}}

\newcommand{\del}{\partial}

\newcommand{\CP}{\mathbb{CP}}

\newcommand{\cH}{\mathcal{H}}

\newcommand{\cA}{\mathcal{A}}

\newcommand{\cL}{\mathcal{L}}

\newcommand{\cE}{\mathcal{E}}

\newcommand{\osc}{\mathrm{osc}}
\newcommand{\cG}{\mathcal{G}}

\newcommand{\res}{\mathbin{\hspace{0.1em}\vrule height 1.3ex depth 0pt width 0.13ex\vrule height 0.13ex depth 0pt width 1.0ex}} 
\newcommand{\vertiii}[1]{{\left\vert\kern-0.25ex\left\vert\kern-0.25ex\left\vert #1 \right\vert\kern-0.25ex\right\vert\kern-0.25ex\right\vert}} 

\makeatletter 														
	\newcommand*{\ccone}{\mathrel{\mathpalette\@ccone\relax}} 
	\newcommand*{\@ccone}[2]{
	   \setbox\z@=\hbox{\m@th$#1\times$}
 	  \ooalign{\raise.25\ht\z@\copy\z@\cr\box\z@}
	}
	\makeatother

\newcommand{\<}{\langle}
\renewcommand{\>}{\rangle}

\newcommand{\Lip}{\textnormal{Lip}}

\usepackage{fancyhdr}
\begin{document}

\title{Entire area-minimizing surfaces in $\R^n$ of density two are quadratic or planar}

\author{Nick Edelen}
\address{Department of Mathematics, University of Notre Dame, Notre Dame, IN 46556 USA}
\email{nedelen@nd.edu}
\author{Luis Atzin Franco Reyna}
\address{Department of Mathematics, University of Notre Dame, Notre Dame, IN 46556 USA}
\email{lfrancor@nd.edu}
\author{Paul Minter}
\address{Department of Pure Mathematics and Mathematical Statistics, University of Cambridge}
\email{pdtwm2@cam.ac.uk}

\begin{abstract}
We classify any entire area-minimizing surface $M^2\subset\R^n$ with density $2$ at infinity as either planar or algebraic, namely the zero set of a quadratic holomorphic polynomial (up to rigid motion) and contained within an affine copy of $\R^4$. In particular, $M$ is either planar, or smooth, multiplicity one, and genus zero.
\end{abstract}

\maketitle

Motivated by the interactions between area-minimizing surfaces and holomorphic subvarieties, we are interested in the following question:
\begin{question}\label{question-start}
    What is the structure of an entire, area-minimizing surface in $\R^n$?
\end{question}
It is well-known that any holomorphic subvariety of $\C^n$ is area-minimizing in its homology class. To what extent the converse holds, namely whether an entire area-minimizing surface in $\R^n$ is necessarily holomorphic (up to rigid motion) has been asked by numerous authors \cite{micallef, schoen:codim, white:parkcity, yau:problems}.

Globally-defined minimal surfaces often exhibit strong rigidity properties, and perhaps the most famous example of this is the Bernstein problem for entire, codimension-one minimal graphs (or, more generally, for area-minimizing or stable hypersurfaces) which asserts that such hypersurfaces must be flat in low dimensions  \cite{Fle62, DG65, Alm66, Sim68, dCP:R3, FCS:stable, P, SSY, B, CL:R4, CLMS24, Mazet}. Aside from being a natural classification question, understanding globally-defined minimal surfaces is in some sense ``dual'' to understanding the local degeneration of solutions, as one expects the small-scale picture of a surface near the formation of a singularity to be modeled on a global solution.

A starting point for Question \ref{question-start} is the work of Morgan \cite{Morgan_1982}, which classified dilation-invariant (``conical'') area-minimizers $T^2\subset\R^n$ as unions of holomorphic planes (holomorphic relative to some fixed orthogonal complex structure). In the special case $n=4$, i.e.~$M^2\subset\R^4\equiv \C^2$, Micallef \cite{micallef, micallef:note} gave a remarkable answer to Question \ref{question-start} by showing that in fact if $M$ is a complete, connected, oriented, parabolic, stable (branched) minimal surface, then it is holomorphic (up to a real rigid motion). In \cite{Edelen_Franco_Minter} we made explicit the further observation that if such $M^2\subset\R^4$ additionally has quadratic area growth, then in fact $M$ must be algebraic.

In higher codimension ($n \geq 5$), most progress has only been made when the surface is known a priori to have genus $0$ or $1$, primarily because of nice splitting properties of the complexified normal and tangent bundles.  Micallef \cite{micallef} showed that if $M^2\subset\R^n$ is complete, minimal, stable, genus $0$, and has finite total curvature, then $M$ must again be holomorphic up to a rigid motion, and is secretly contained within some copy of $\C^k\equiv \R^{2k}$.  Fraser--Schoen \cite{fraser-schoen} proved a holomorphicity result analogous to Micallef for genus $1$ surfaces which are ``covering stable,''  and recently Sagman--Thomas-Rene \cite{SaTh} extended the results of both Micallef and Fraser--Schoen by removing the assumptions of completeness and finite total curvature.  To illustrate the subtlety in general codimension, even in the case where the surface is smooth, there are examples of genus $2$ stable minimal immersions into $\R^{22}$ with finite total curvature which are not holomorphic with respect to any orthogonal complex structure (see \cite{ArMiPi, ArMi}).

\vspace{3mm}

For the general situation of area-minimizing surfaces as in Question \ref{question-start} (precisely, ``mass-minimizing integral $2$-currents $T$ without boundary''), it is known by the work of Rivière \cite{Riviere_2004} (see Theorem \ref{thm:unique}) that if $T$ has quadratic area growth, then $T$ must be asymptotic to a union of planes, possibly with multiplicity (and these planes must be jointly holomorphic for some orthogonal complex structure by \cite{Morgan_1982}). The total number of planes, counted with multiplicity, is equal to the density $\Theta_T(\infty) \in \N$ of $T$ at infinity. The quantity $\Theta_T(\infty)$ may be viewed as a measure of the complexity of $T$. The case $\Theta_T(\infty)=1$ is trivial, as then $T$ is necessarily a multiplicity-one plane (seen by, e.g., the monotonicity formula). For density $\Theta_T(\infty)=2$, there are non-trivial examples given by the zero set of any quadratic holomorphic polynomial in $\C^2$, e.g.~
\[
\{(z,w)\in \C^2: z^2=w\} \quad \text{or}\quad \{(z,w)\in \C^2:zw=1\}.
\]

Thus, area-minimizing surfaces $T\subset\R^n$ with $\Theta_T(\infty)=2$ are in some sense the ``simplest'' non-trivial area-minimizing surfaces.  Even so, their study (in particular in regards Question \ref{question-start}), can be quite tricky, as a priori such surfaces could be very structurally complicated.  A key difficulty which illustrates the non-trivial subtleties of the problem is seen in the instance where $T$ is asymptotic to a multiplicity $2$ plane, as one could a priori imagine a situation where $T$ looks like two parallel planes connected by an infinite sequence of ``necks'' or ``handles'' which drift off to infinity. This problem of how to deal with ``infinite topology'' is a common difficulty in the study of minimal surfaces.

Our main result is a resolution of Question \ref{question-start} when $\Theta_T(\infty)=2$. We completely classify such $T$, showing that $T$ must either be planar or must coincide with the zero set of a degree $2$ holomorphic polynomial in some copy of $\C^2$.

\begin{theorem}\label{thm:main}
    Let $T$ be an area-minimizing integral $2$-current in $\R^{2+m}$ (for $m\geq 2$) satisfying $\del T = 0$ and $\Theta_T(\infty)=2$. Identify $\C^2\equiv \R^4\times\{0^{m-2}\}\subseteq\R^{2+m}$. Then $T$ takes one of the following forms:
    \begin{enumerate}
        \item $T = 2\llbracket P\rrbracket$ for some affine $2$-plane $P\subset\R^{2+m}$;
        \item $T = A_\#\llbracket p_1+P_1\rrbracket + A_\#\llbracket p_2+P_2\rrbracket$ for $P_1,P_2$ complex $2$-planes in $\C^2$, $p_1,p_2\in \R^{2+m}$, and $A\in O(2+m)$;
        \item $T = A_\#\llbracket p + \{(z,w)\in\C^2: az^2+bzw+cw^2+dz+ew+f = 0\}\rrbracket$ for some $a,b,c,d,e,f\in \C$, $p\in \R^{2+m}$, and $A\in O(2+m)$.
    \end{enumerate}
    In particular, $T$ is either planar, or $T$ is smooth, multiplicity one, genus zero, and is (up to a rigid motion) the zero set of a quadratic holomorphic polynomial.
\end{theorem}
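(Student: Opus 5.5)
Since $\Theta_T(\infty)=2$, the tangent cone at infinity is a sum of two planes counted with multiplicity. By Rivière (Theorem \ref{thm:unique}) this cone $C$ is unique, and by Morgan \cite{Morgan_1982} it is a union of planes $P_1,P_2$ that are holomorphic for a single orthogonal complex structure; possibly $P_1=P_2$, in which case $C=2\llbracket P_1\rrbracket$. After a rotation $A$, I take $P_1,P_2\subset\C^2\equiv\R^4\times\{0\}$ to be complex lines. The monotonicity formula gives $\Theta_T(x)\le 2$ for every $x$. If some point has density $2$, equality in monotonicity forces $T$ to be a cone about that point. It is then $2\llbracket P\rrbracket$ or a union of two complex planes through the point, which is case (1) or (2). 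So I may assume $\Theta_T\equiv1$ on $\spt T$. Allard regularity, together with the fact that density-one tangent cones of 2D minimizers are planes, then shows that $T$ is a smooth, embedded, multiplicity-one minimal surface $M$, since branch points have density $\ge2$. If $M$ is disconnected, each component has density $1$ at infinity and is a plane, giving case (2) again.

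Next I split according to the cone. \emph{Transverse case} $P_1\ne P_2$: outside a large ball, $M$ is a union of two graphs over annular regions of $P_1$ and $P_2$, with decay coming from the rate in Rivière's uniqueness theorem. So $M$ is a properly embedded minimal surface with finite total curvature and two planar-type ends. Gauss–Bonnet (Jorge–Meeks) relates the genus to total curvature and shows the topology is finite. \emph{Multiplicity-two case} $P_1=P_2=P$: here the danger is necks or handles escaping to infinity. I would use the area-minimizing property at each large scale: outside a compact set, $M$ is a two-valued graph over $P$ with small Lipschitz constant, by Almgren/De Lellis–Spadaro-type Lipschitz approximation combined with decay. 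The key step is to show that no topology appears at large scales. At any scale $R$, the rescaled $M$ is close to $2\llbracket P\rrbracket$. If a neck existed at scale $R$, the blow-up (after rescaling the height by the excess) would give a nontrivial two-valued Dir-minimizer on $P$. For two-valued holomorphic-type minimizers, the frequency function is monotone and the limit is a homogeneous two-valued harmonic function $\mathrm{Re}(cz^{k/2})$. A planar branch point at infinity in the sheets is allowed, as in $z^2=w$, but a persistent sequence of necks would contradict monotonicity of frequency and uniqueness of the blow-down. This is the step I expect to be the main obstacle: ruling out infinite topology when the cone has multiplicity two. I would attack it by proving a decay/frequency estimate at infinity that makes the excess decay like a power $R^{-\alpha}$, so that $M\setminus B_R$ is a single two-valued graph with at most one branch point. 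Finite total curvature then follows.

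With finite total curvature, $\int_M|A|^2<\infty$, and quadratic area growth established, I would show that $M$ lies in $\R^4$ and is holomorphic. Since the ends are asymptotic to complex lines in $\C^2$, the Gauss map extends to the compactification $\bar M$, a compact Riemann surface, and $M$ is stable. I would then apply Micallef's argument: holomorphic sections of the complexified normal bundle, with stability and an index count, force holomorphicity. This needs genus $0$, which in turn follows from Gauss–Bonnet. Since total curvature equals $-2\pi(\text{degree contribution})$ and the density at infinity is $2$, the Jorge–Meeks formula gives $\int K=2\pi(\chi(M)-\#\text{ends}-\ldots)$ bounded below by $-4\pi$. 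This forces genus $0$, with either two ends (transverse) or one end of multiplicity two. To see that $M$ stays in $\R^4$, I use the holomorphic Gauss map: the coordinates in the $\R^{m-2}$ factor are harmonic functions of sublinear growth, because the ends are asymptotic to planes in $\C^2$, and are therefore constant. Finally, $M$ is a holomorphic curve in $\C^2$ with quadratic area growth, so by \cite{Edelen_Franco_Minter} (via Bishop/Stoll) it is algebraic. Its degree equals the density at infinity, which is $2$, so $M$ is the zero set of a quadratic polynomial, giving case (3). Smoothness, multiplicity one and genus zero follow from the above.
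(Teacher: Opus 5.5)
Your opening reductions match the paper: a density-$2$ point forces a cone, otherwise $T$ is smooth and multiplicity one, decomposable $T$ is two planes, and Allard handles infinity in the transverse case. The second half of your argument, however, has a genuine and circular gap.

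\textbf{Genus zero.} With end multiplicities summing to $\Theta_T(\infty)=2$, the Jorge--Meeks formula gives $\int_M K=2\pi(\chi(M)-2)=-2\pi(2g+k)$, which places no bound on $g$. The inequality $\int_M K\geq -4\pi$ is what a degree-$2$ holomorphic curve satisfies, so it presupposes the conclusion. Hence Micallef's higher-codimension theorem, which needs genus $0$ (without it there are non-holomorphic stable examples in $\R^{22}$), is not available to you.

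\textbf{Containment in $\R^4$.} Your Liouville step is false as stated: harmonic coordinates of sublinear growth need not be constant. The Hoffman--Osserman annulus in $\R^5$ from the introduction has $\Theta=2$ and two embedded ends asymptotic to transverse planes, yet its fifth coordinate is $\mathrm{Re}\log z$. In the multiplicity-two case, normal coordinates genuinely grow like $r^{1/2}$ (e.g.\ $z$ on $\{z^2=w\}$). So a copy of $\C^2\supset P$ fixed in advance need not contain $M$.

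The paper runs in the opposite order. It first proves $\spt\|T\|\subset p+W$ with $\dim W\leq 4$ (Theorem \ref{thm:subspace-rigidity}). Here $W=P\oplus\R a\oplus\R b$ comes from the $r^{1/2}$ coefficients of the asymptotic expansion, or $W=P_1\oplus P_2$ in the transverse case. A cut-off competitor argument uses minimality to force the $\log r$ coefficients into $W$; this is exactly what fails for the Hoffman--Osserman annulus. The now-bounded $W^\perp$ coordinates are then constant by Lemma \ref{lem:vector-rigidity}. Only after that does the paper invoke Micallef's $\R^4$ theorem, which has no genus hypothesis, and \cite{Edelen_Franco_Minter}. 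Genus $0$ is an output, not an input.

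\textbf{Multiplicity-two regularity at infinity.} You correctly flag this step, but leave it open, and your proposed fix aims at the wrong estimate. Power decay of $E_{T,P}(0,R)$ is already Rivi\`ere's theorem, and it does not exclude small necks joining two sheets at height $\sim R^{1-\alpha}$. Blowing up scale by scale and using frequency monotonicity of each limiting Dir-minimizer gives no quantity for $T$ that is monotone across scales. So you cannot conclude that the blow-ups are nonzero, homogeneous of a fixed degree, average-free, and with distinct values away from the origin, which is what splitting the sheets requires.

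The missing idea is almost-monotonicity of the planar frequency $N_{T,P}(\rho)$ of $T$ itself as $\rho\to\infty$ (Theorem \ref{thm:freq-mono}, Corollary \ref{cor:freq-mono}). This gives a limit $M$, and positivity of the monotone quantity together with the $O(\rho^{1-\alpha})$ height bound forces $M\in(0,1-\alpha]$. The frequency- and density-pinched sheeting Lemma \ref{lem:freq-sheeting} then identifies the blow-up as $a\sqrt r\cos(\theta/2)+b\sqrt r\sin(\theta/2)$. It separates the sheets via the improved $L^2$--$L^\infty$ estimate and applies Allard. The result is a single smooth two-valued graph outside a compact set (Theorem \ref{thm:structure-at-infty}).
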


We emphasize that Theorem \ref{thm:main} \emph{proves} that $T$ has genus $0$ using the weak asymptotics at infinity.  Theorem \ref{thm:main} also shows that density $2$ at infinity is strictly stronger than having genus $0$, as one can easily construct examples of genus $0$ holomorphic curves in $\C^n$ which are not contained within a copy of $\C^2$, such as the image of $z\mapsto (z,z^2,z^3)\in \C^3$.

It is tempting to conjecture that \emph{any} area-minimizing surface in $\R^n$ with quadratic area growth must be algebraic, as is the case in $\R^4$ (see \cite{Edelen_Franco_Minter}). However, the counterexamples in \cite{ArMiPi} of \emph{stable} minimal surfaces in high codimension show that the problem is quite subtle, and would strongly use the area-minimizing property.

Nonetheless, our methods are not purely restricted to the case $\Theta_T(\infty)=2$. In fact, for general $\Theta_T(\infty) = Q\in \N$, we are able to classify the ``simplest'' $T$ which are asymptotic to a single plane $P$ with multiplicity $Q$. Indeed, in this situation we are able to show that it is possible to adapt the planar frequency function of Krummel--Wickramasekera to analyze the behavior of the current, \emph{but now at infinity}. This allows us to associate another quantity which measures the ``complexity'' of $T$, namely the \emph{planar frequency} value $N_{T,P}(\infty)$ of $T$ at infinity, which we show takes values $N_{T,P}(\infty)\in \{p/q: p\in \{1,\dotsc,q-1\}, q\in\{2,\dotsc,Q\}\}$. In the ``simplest'' case $N_{T,P}(\infty)=1/Q$ (which, when $Q = 2$, is the only possible option), we are able to show that $T$ must be contained within some affine copy of $\R^4\equiv \C^2$, which implies from \cite{Edelen_Franco_Minter} that $T$ must be algebraic. In particular, up to a rigid motion and dilation, such $T$ are given by the zero set of the polynomial
\[
\{(z,w)\in \C^2: z^Q=w\},
\]
and so $T$ must be embedded with genus $0$. Unlike when $Q=2$, in the case $Q\geq 3$ a priori $T$ could have interior singularities, including branch points, which we a posteriori show do not exist.

\vspace{3mm}

\textbf{Outline of the Proof:} Let us describe our strategy to proving Theorem \ref{thm:main}. The overall goal is to prove that such $T$ must be supported in some affine copy of $\R^4$, as then the problem is reduced to the classification problem in $\R^4$ as considered by \cite{micallef, Edelen_Franco_Minter}.

The initial observation is that from \cite{Riviere_2004} we know that $T$ scale-invariantly decays to some unique tangent cone at infinity, which is necessarily a sum of two, possibly identical, planes. Therefore, there are two cases to consider, depending on whether $T$ is asymptotic to two distinct (necessarily transverse) planes $P_1,P_2$, or $T$ is asymptotic to a single plane $P$ with multiplicity $2$.

The first key step is to prove $T$ has good asymptotic behavior (is ``regular at infinity''). The difficult case for this is when $T$ is asymptotic to the multiplicity-2 plane $2\llbracket P\rrbracket$. There is not, in general, an Allard-type regularity theorem for area-minimizing surfaces of codimension $\geq 2$ near a plane with multiplicity (and indeed as previously mentioned a key difficulty is that a priori such $T$ might exhibit infinite topology drifting off to infinity). Nonetheless, we are able to prove a multiplicity-$2$ (and even multiplicity-3) sheeting theorem \emph{does} hold in this setting (Lemma \ref{lem:freq-sheeting})

The primary tool for our sheeting theorem is an adaptation of the intrinsic planar frequency function $N_{T,P}(\rho)$ of \cite{KW23a}, which is defined for any $T$, but resembles the standard frequency function $N_v(\rho)$ when $T$ is well-approximated by a Dirichlet-minimizer $v$.  We show that if both the area density $\Theta_T(\rho)$ and the planar frequency $N_{T,P}(\rho)$ are suitably pinched, then, outside of some ball, $T$ splits into a smooth, multiplicity-one, $2$-valued graph modeled on a $1/2$-homogeneous Dirichlet-minimizer. We state this below (see Sections \ref{sec:prelim} \& \ref{sec:freq} for details on notation).

\begin{lemma}[Multiplicity-2 sheeting; Lemma \ref{lem:freq-sheeting}]\label{lem:freq-sheeting-teaser}
    Let $P = \R^2\times\{0^m\}$ and let $T^2$ be an area-minimizing integral $2$-current in the cylinder $C_2(0)\subset \R^{2+m}$ over $P$, with $\del T = 0$. Take $\eps>0$ and $N_0\in [\eps,1-\eps]$. Suppose that
    \[
    E_{T,P}(0,2) \equiv \int_{C_2(0)}d(x,P)^2\, d\|T\| < \delta^2,
    \]
    and, for all $\rho\in [1/2,2]$,
    \[
    |\Theta_T(0,\rho)-2|\leq\delta \quad \text{and}\quad |N_{T,P}(\rho)-N_0|\leq \delta.
    \]
    Then, if $\delta = \delta(m,\eps)\in (0,1)$ is sufficiently small, we have $|N_0-1/2|<\eps$, and we can write
    \[
    T\res (B^2_1\setminus \overline{B^2_{1/2}}\times\R^m) = \llbracket \graph_{2\R^2}(v)\rrbracket,
    \]
    where $v$ is a smooth, 2-valued, solution to the minimal surface system, which is $\eps E_{T,P}(0,2)^{1/2}$-close to a $1/2$-homogeneous Dirichlet-minimizer.
\end{lemma}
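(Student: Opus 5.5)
We argue by contradiction and compactness, blowing up along a sequence that violates the conclusion. Suppose there are $\eps>0$, $\delta_k\to 0$, currents $T_k$ and values $N_{0,k}\in[\eps,1-\eps]$ satisfying the hypotheses with $\delta=\delta_k$, for which either $|N_{0,k}-1/2|\geq\eps$ or the graphical conclusion fails. Set $E_k:=E_{T_k,P}(0,2)\to 0$.

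\textbf{Step 1: blow-up.} Since the density is pinched near $2$ and the $L^2$ excess tends to zero, we use the Almgren/De Lellis--Spadaro strong Lipschitz approximation (as adapted in the planar-frequency framework of \cite{KW23a}). This approximates $T_k$ by $2$-valued Lipschitz maps $u_k$ over $P$. The normalized maps $u_k/E_k^{1/2}$ converge strongly in $L^2_{loc}$ and $W^{1,2}_{loc}$ on $C_{3/2}$ to a $2$-valued Dirichlet minimizer $v$. Strong convergence requires a non-concentration estimate for the Dirichlet energy, which follows from area minimality via the usual comparison. We also need $v\not\equiv 0$ with controlled $L^2$ norm on annuli, which comes from the pinched frequency hypothesis. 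The key point is that the planar frequency $N_{T_k,P}(\rho)$ is built from $\int|x^\perp|^2$ and its derivative, and it converges to Almgren's frequency $N_v(\rho)$ for a.e.\ $\rho\in[1/2,3/2]$, after subtracting the average $\mathbf{a}(v)$ if necessary. Because $T$ has no boundary and is centered, we expect the average to vanish in the limit, using the excess normalization. Hence $N_v(\rho)\equiv N_0:=\lim N_{0,k}$ on $[1/2,3/2]$.

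\textbf{Step 2: identify the limit.} Constant frequency on an interval gives, via Almgren's monotonicity identity, that $v$ is $N_0$-homogeneous on the annulus, and by unique continuation it is homogeneous globally. A homogeneous $2$-valued Dirichlet minimizer on $\R^2$ has degree in $\{k/2:k\in\N\}$. Indeed, in polar coordinates it is $\mathrm{Re}(c\,z^{k/2})$ or a sum of two single-valued harmonic polynomials. Since $N_0\in[\eps,1-\eps]$, we must have $N_0=1/2$, and $v=\{\pm\mathrm{Re}(c\,z^{1/2})\}$ with $c\in\C^m$, $c\neq 0$, up to an additive linear part that is excluded by the homogeneity degree. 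This contradicts $|N_{0,k}-1/2|\geq\eps$ for large $k$.

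\textbf{Step 3: sheeting.} For the graphical statement, observe that on the annulus $B_1\setminus\overline{B_{1/2}}$ the limit $v$ is a genuinely two-valued function whose two sheets are separated by $\gtrsim|c|E_k^{1/2}$, with $|c|$ bounded below by the normalization. Hence, on any simply connected sector of the annulus, $T_k$ is $L^2$-close, at scale $E_k^{1/2}$, to two separated single-valued sheets. By the monotonicity-formula density bound, each sheet carries density close to $1$. We then apply Allard regularity, or the codimension-one-sheet De Giorgi $\eps$-regularity, to each sheet separately on small balls. This shows that $T_k$ there is a union of two smooth single-valued minimal graphs converging in $C^{1,\alpha}$ to the sheets of $v$. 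Patching around the annulus produces a smooth $2$-valued graph whose monodromy matches that of $v$, so it is connected. Elliptic estimates for the minimal surface system then upgrade the closeness to $\eps E^{1/2}$ in $C^1$.

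\textbf{Main obstacle.} The hard step is making Step 1 rigorous. We need the planar frequency of $T_k$ to converge to the Almgren frequency of the blow-up, including strong $W^{1,2}$ convergence and the absence of mass loss in the derivative term $\int|\nabla^T x^\perp|^2$ as $\delta\to0$. We also need to rule out a degenerate blow-up, namely $v\equiv0$, or $v$ collapsing to a single-valued function with a double sheet. Against degeneracy I would first use the lower frequency bound $N_0\geq\eps$ together with the excess decay estimate inherited from frequency monotonicity in \cite{KW23a}, which prevents energy from escaping to the origin or to the boundary.
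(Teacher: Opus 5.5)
Your Steps 1--2 essentially follow the paper: Lipschitz approximation, blow-up to a $2$-valued Dirichlet minimizer $w$, $N_{T_k,P}(\rho)\to N_w(\rho)$ as in \cite[Lemma 5.5]{KW23a}, homogeneity, and the double-cover/Fourier argument forcing $N_0=1/2$. The step you flag as the main obstacle is handled in the paper by citation. The genuine gap is in Step 3, which you treat as routine.

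Knowing $\int d(x,\graph(E_k^{1/2}w))^2\,d\|T_k\|=o(E_k)$ does not let you ``apply Allard to each sheet''. Allard needs a current without boundary in the cylinder. So you must first write $T_k\res C_\rho(z)=T_{k,1}+T_{k,2}$ with $\del T_{k,j}=0$. This requires $\spt\|T_k\|\cap C_\rho(z)$ to avoid a slab of width comparable to the sheet separation $E_k^{1/2}h$, where $h=|w_1(z)-w_2(z)|$.

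$L^2$-closeness does not give this. A neck of diameter $\sim E_k^{1/2}h$ joining the two sheets has area $O(E_k)$, so it contributes only $O(E_k^2)$ to the $L^2$ distance. Monotonicity only yields $\sup d\lesssim(\int d^2)^{1/4}=o(E_k^{1/4})$. Lemma \ref{lem:Linfty-by-L2}, applied relative to a single plane, only yields $\sup d\leq cE_k^{1/2}h$ with $c\geq 1$, which is the same order as the separation and cannot distinguish points between the sheets from points on them. So ``each sheet carries density close to $1$'' presupposes the very decomposition that has to be proved. (De Giorgi's codimension-one theory is also not available in higher codimension.)

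The missing ingredient is the improved $L^2$--$L^\infty$ estimate for area-minimizers near a union of parallel planes, Theorem \ref{thm:improved-Linfty-L2} (from \cite{DLMS23} or \cite{KW23b}). This is a genuinely minimizing height bound. The paper uses it as follows.

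First it shrinks $\rho$ so that $\osc_{B_{2\rho}(z)}w_j\leq\beta h$. Then $T_k$ is $L^2$-close to $\bm{\pi}_k=(E_k^{1/2}w_1(z)+\R^2)\cup(E_k^{1/2}w_2(z)+\R^2)$, with squared $L^2$ distance of order $E_k(F_k+E_k^{\gamma}+\beta^2h^2)$ up to powers of $\rho$. Here $F_k=\int_{B_{7/4}}\cG(E_k^{-1/2}u_k,w)^2\to 0$.

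The improved estimate then places $\spt\|T_k\|\cap C_\rho(z)$ in an $R_k$-neighbourhood of $\bm{\pi}_k$ with $R_k<E_k^{1/2}h/100$, for $\beta$ small and $k$ large. Only now can one:
\begin{enumerate}
\item cut along the gap;
\item check both pieces are nonzero, since otherwise all of $T_k\res C_\rho(z)$ lies within $R_k$ of one plane, contradicting $L^2$-closeness to both branches of $w$;
\item note each piece is area-minimizing and converges to a multiplicity-one plane;
\item apply Allard.
\end{enumerate}
The radius $\rho$ can be taken uniform in $z$ on the annulus, and a covering argument gives the $2$-valued graph.

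There are also three smaller corrections.
\begin{itemize}
\item Non-degeneracy of the blow-up comes from normalizing by $E_{T_k,P}(0,1)$ and comparing with $E_{T_k,P}(0,2)$ via the doubling estimate of Corollary \ref{cor:H-control}. That estimate uses the \emph{upper} frequency bound, not the lower one.
\item No recentering is needed, since $N_{T_k,P}$ is computed relative to $P$. The average of $w$ vanishes because it is a single-valued $N_0$-homogeneous harmonic function with $N_0\in(0,1)$, not because of any centering of $T$.
\item Uniform separation of the branches $\pm\mathrm{Re}(cz^{1/2})$ needs $\mathrm{Re}\,c$ and $\mathrm{Im}\,c$ to be linearly independent. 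This follows from discreteness of $\{w=2\llbracket 0\rrbracket\}$ (Theorem \ref{thm:singular-set-bound}).
\end{itemize}
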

For $T$ close to a plane of arbitrary multiplicity, the analogue of Lemma \ref{lem:freq-sheeting-teaser} implies a separation-type theorem -- see Remark \ref{rem:freq-sheeting}.

To apply Lemma \ref{lem:freq-sheeting-teaser}, we generalize the frequency monotonicity formula of \cite{KW23a} (originally used for $T$ decaying to $P$ near $0$) to show that the planar frequency $N_{T,P}(\rho)$ is monotone almost-increasing \emph{near infinity} (Theorem \ref{thm:freq-mono}). In fact, we prove a general scale-invariant differential inequality for $N_{T,P}(\rho)$ in terms of the $L^2$ height excess $E_{T,P}(0,\rho)$ of $T$ relative to $P$, which implies almost-monotonicity in a variety of potential cases. The upshot is that for an area-minimizing $T$ asymptotic to $2\llbracket P\rrbracket$, we can show that, for large scales, $T$ looks like the graph over $P$ of a function like
\begin{equation}\label{eqn:intro-1}
    ar^{1/2}\cos(\theta/2)+br^{1/2}\sin(\theta/2) + c\log(r) + d + O(r^{-1/2}),
\end{equation}
for $a,b,c,d\in P^\perp$ and $a,b$ linearly independent. Here, $(r,\theta)$ are polar coordinates in the $2$-plane $P$. We stress that almost-monotonicity of planar frequency near infinity is the mechanism which rules out pathological behavior of $T$ at large scales, as it allows one to show that the asymptotic behavior of $T$ is modeled on its tangent maps at infinity (which, at this stage of the proof, are not known to be unique), which are $2$-valued Dirichlet-minimizers that are \emph{non-zero}, \emph{average-free}, and \emph{homogeneous} (of degree $<1$).

In the easier case when $T$ is asymptotic to distinct planes $\llbracket P_1\rrbracket + \llbracket P_2\rrbracket$, then from Allard's regularity theorem we can deduce that each end of $T$ looks like the graph over $P_i$ of a function like
\begin{equation}\label{eqn:intro-2}
    c_i\log(r)+d_i+O(r^{-1}),
\end{equation}
for $c_i,d_i\in P_i^\perp$ and $r$ the radial variable in $P_i$. So, in either case, we have precise asymptotics, and one might refer to such $T$ as being ``regular at infinity''.

\vspace{3mm}

The second key ingredient is a Liouville-type theorem which shows $T$ that lies in some affine copy of $\R^4$ (Theorem \ref{thm:subspace-rigidity}). In the first case \eqref{eqn:intro-1}, the candidate $4$-space is $W:= P\oplus \R a \oplus \R b$, whilst in the second case \eqref{eqn:intro-2} the candidate $4$-space is $W:=P_1\oplus P_2$. We use the area-minimizing property to show that the $\log$ terms in \eqref{eqn:intro-1} or \eqref{eqn:intro-2} must lie in $W$ (i.e.~$c\in W$ or $c_1,c_2\in W$). This is done by using a competitor argument, as one can build a competitor with less area if this were not the case. From the expansions \eqref{eqn:intro-1}, \eqref{eqn:intro-2}, we then get that the coordinate functions in the $W^\perp$ are bounded, and using harmonicity of the coordinate functions and parabolicity of $T$ it follows that the $W^\perp$ directions must all be constant in $T$, and hence $T$ is supported in some translate of $W$.  

We highlight that the principle of bounding codimension in terms of density is very general, and applies to general densities at infinity as long as $T$ is indecomposable and regular at infinity. We state this below.

\begin{theorem}[Codimension bound; Theorem \ref{thm:subspace-rigidity}]\label{thm:subspace-rigidity-teaser}
    Let $T^2$ be an indecomposable area-minimizing $2$-current in $\R^{2+m}$ with $\del T = 0$ and $\Theta_T(\infty)<\infty$. Suppose $T$ is regular at infinity, i.e.~outside some ambient ball $T$ can be written as a smooth multigraph over its tangent cone at infinity. Then, $T$ is supported in an affine subspace $p+W$ of dimension $\leq 2\Theta_T(\infty)$.
\end{theorem}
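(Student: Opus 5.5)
The plan is to define the candidate subspace $W$ from the leading asymptotics of $T$ at infinity, and then show that every coordinate of $T$ transverse to $W$ is constant. By Rivière (Theorem \ref{thm:unique}) and Morgan, the tangent cone at infinity is $\sum_i Q_i\llbracket P_i\rrbracket$ with $\sum_i Q_i=\Theta_T(\infty)$. Regularity at infinity means that, outside a ball, $T$ is a finite union of ends. Each end is a smooth $q$-valued graph over some $P_i$, with $q\le Q_i$, of the form $\graph_{P_i}(u)$ with $u=o(r)$ as $r\to\infty$.

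\textbf{Step 1: asymptotic expansion of each end.} Reparametrize an end over $\{|\zeta|>R\}\subset\C$ via $\zeta\mapsto \zeta^q$. It becomes a single-valued graph solving the minimal surface system with sublinear growth. Standard linearization, with the decay from Rivière, then gives an expansion
\[
u = \sum_{0<k/q<1} \mathrm{Re}(a_k \zeta^{k})\; +\; c\log|\zeta|+d+O(|\zeta|^{-\alpha}),
\]
with $a_k\in P_i^\perp\otimes\C$ and $c,d\in P_i^\perp$. The fractional harmonic terms have growth $r^{k/q}$ with $k/q<1$. Set
\[
W:=\sum_{\text{ends}} \Big( P_i + \mathrm{span}_\R\{\mathrm{Re}\,a_k,\mathrm{Im}\,a_k\}\Big).
\]
A dimension count gives $\dim W\le 2\Theta_T(\infty)$. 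Each end over $P_i$ of order $q$ contributes $P_i$ plus at most $2(q-1)$ extra real directions, one complex vector for each $k=1,\dots,q-1$. Hence the total is at most $\sum_{\text{ends}} 2q \le 2\sum_i Q_i$, with $P_i$ shared among the ends lying over it.

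\textbf{Step 2: the log terms lie in $W$ (main obstacle).} Let $\pi^\perp$ be the orthogonal projection onto $W^\perp$. On each end, $\pi^\perp u = \pi^\perp c\,\log r+\pi^\perp d+o(1)$, since the fractional terms lie in $W$. The claim is that $\pi^\perp c=0$ for every end. The coordinate functions $x\cdot e$, $e\in W^\perp$, are harmonic on $T$. Applying the divergence theorem to $\nabla^T(x\cdot e)$ on $T\cap B_R$ shows that the flux $\sum_{\text{ends}} 2\pi q\,(c\cdot e)$ vanishes, so the log coefficients balance.

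To rule out nonzero individual coefficients, I would use a competitor. Replace each end, on the annulus $B_{R'}\setminus B_R$, by a surface that interpolates $\pi^\perp$ to a constant. One option is to cut off $\pi^\perp u$ logarithmically and then project onto $p+W$, since nearest-point projection onto an affine subspace is $1$-Lipschitz and mass-nonincreasing. Comparing with the original, the saving is the Dirichlet energy of the $W^\perp$ components on the annulus, of order $\sum |\pi^\perp c|^2 \log(R'/R)$. The cost of the gluing errors is $O(1)$. For large $R'/R$ this contradicts minimality unless $\pi^\perp c=0$ for all ends.

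The delicate point is to make this rigorous in higher codimension. The cut-and-project competitor must have the same boundary, and the error terms (non-parallel ends, the $d$ offsets, the $O(r^{-\alpha})$ remainders) must genuinely be $O(1)$. Indecomposability should be used here: it ensures the ends are connected through the compact core, so the constants $\pi^\perp d$ can be compared. I expect this to be the hardest step.

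\textbf{Step 3: Liouville conclusion.} With $\pi^\perp c=0$, each function $x\cdot e$ for $e\in W^\perp$ is bounded on $T$. It is also harmonic on the regular part of $T$, which has quadratic area growth and is therefore parabolic. The singular set is of codimension $2$, which is negligible for capacity. Hence $x\cdot e$ is constant on each indecomposable piece, and so on $T$. Thus $\spt T\subset p+W$ with $\dim W\le 2\Theta_T(\infty)$.
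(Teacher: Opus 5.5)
Your Step 3 matches the paper: it is the log-cutoff/parabolicity argument of Lemma \ref{lem:vector-rigidity}. Your dimension count in Step 1 is the one in Lemma \ref{lem:soft-subspace-rigidity}. Step 2, however, has a genuine gap: the competitor you describe cannot produce a contradiction. You keep the ends joined through the core and project that core onto a single $p+W$. So on every end the $W^\perp$-part must be taken, on $R<r<R'$, from $c_j\log R'+d_j$ to one common constant $k$.

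By the capacity of the annulus, any such interpolation costs roughly at least $\pi q_j|c_j\log R'+d_j-k|^2/\log(R'/R)$ in area. The original $W^\perp$-energy from the core out to $R'$ is $\sum_j\pi q_j|c_j|^2\log(R'/\rho)$. Your balancing identity $\sum_j q_jc_j=0$ says $(c_j)_j$ is orthogonal to constants in the $q_j$-weighted inner product. Minimizing over $k$ then gives competitor energy at least $\sum_j\pi q_j|c_j|^2\log(R'/\rho)-O(1)$. So the net gain is $O(1)$, never of order $\log$.

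This is structural, not a matter of choosing a better cutoff. The coordinates $\langle x,e\rangle$, $e\in W^\perp$, are harmonic on the connected surface. Hence no competitor that keeps the ends connected and merely re-interpolates these coordinates can beat them at order $\log$. The saving ``on the annulus'' you claim does not exist.

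The missing idea is to change the topology: discard the core and cap each end off separately. In the paper, inside $C_R(0,P_i)$ the $i$-th end is replaced by the graph of $w_i=\phi(r)(v_i-A_i^\perp\log r)+A_i^\perp\log R$. This agrees with $v_i$ at $r=R$ and equals the constant $A_i^\perp\log R$ (a flat $Q_i$-fold disc) for $r<\rho$. Each end gets its own height, so balancing is irrelevant.

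Since $A_i^\perp\perp W$ and the $W^\perp$-part of $v_i-A_i^\perp\log r$ has gradient $O(r^{-1-1/Q+})$, the cross terms in the Jacobian are integrable. The area saving is then $\pi Q_i|A_i^\perp|^2\log(R/\rho)$ against an $O(\rho^2)$ cost; this is the catenoid-versus-two-discs mechanism. Indecomposability plays no role in this step. It is used only at the end, to upgrade ``locally constant'' to ``constant''.

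There is also a secondary issue in Step 1. Your expansion with remainder $O(|\zeta|^{-\alpha})$ is false in general. For $q\geq 4$ the cubic nonlinearity $Dv\star Dv\star D^2v$ produces non-harmonic terms of positive order (up to $r^{1-3/q}$), and these need not lie in the span of your $a_k$. The paper avoids this by building $W$ iteratively. It expands only components $\langle v,e\rangle$ with $e$ orthogonal to the current subspace. These solve the linear equation $g^{ij}(Dv)D^2_{ij}u=0$, so their non-harmonic corrections are $2/Q$ lower than their own growth.
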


It's worth comparing and contrasting Theorem \ref{thm:subspace-rigidity-teaser} to the codimension bound of \cite[Corollary 0.6]{CM-complexity}, which implies that any (smooth) stationary $T^n \subset \R^{n+m}$ is contained in an affine subspace of dimension $\leq c(n) \Theta_T(\infty)$,\footnote{\cite{CM-complexity} actually give an upper bound in terms of entropy $\lambda(T)$, but it follows directly from the monotonicity formula and the expression for Gaussian area that $\lambda(T) \leq \Theta_T(\infty)$ for any stationary $T$.} for some dimensional constant $c(n)$.  One can interpret our Theorem \ref{thm:subspace-rigidity-teaser} as saying that when $n = 2$ and $T$ is area-minimizing (and has reasonable asymptotics), then the sharp constant $c(n)$ is $2$ (though we also remark our proof strategy is very different from \cite{CM-complexity}).  Having the sharp bound $2\Theta_T(\infty)$ is obviously crucial for our argument, and obtaining this bound also requires heavily the area-minimizing property.  Indeed, the following example shows that the bound $2\Theta_T(\infty)$ can fail for complete, embedded, minimal $2$-surfaces which are merely stationary.

\begin{example}
If $F : \C \setminus \{0\} \to \R^5$ is the map
\[
F(z) = \mathrm{Re} \left( -(z^{-1} + z/4), -\sqrt{-1}(z^{-1} - z/4), z, \sqrt{-1}z, \log(z) \right),
\]
then by \cite[Proposition 6.6]{hoffman-osserman} (see also \cite[Example 6.2]{Hojoo}) $F$ defines a complete, embedded, minimal annulus $M^2 \subset \R^5$ (with total curvature $-4\pi$) which is not contained in any affine $\R^4$. It's easy to see that each end has multiplicity-one, and so $\Theta_M(\infty) = 2$.
\end{example}

We expect Theorem \ref{thm:subspace-rigidity-teaser} to hold for \emph{any} indecomposable area-minimizing integral $2$-current $T$ with $\Theta_T(\infty)<\infty$. It is worth noticing that Theorem \ref{thm:subspace-rigidity-teaser} holds (and is sharp) when $T$ is an irreducible algebraic curve in $\C^n$, as in this case each independent projective complex coordinate contributes one complex dimension to the canonical line bundle, but on the other hand this line bundle has complex dimension $\leq \Theta_T(\infty) + 1$.

More generally, we would expect that any area-minimizing $2$-current $T$ with finite density to be regular at infinity, by analogy with Chang's regularity theorem \cite{chang, DSS3} which says that near any (interior) singular point $T$ is a branched minimal immersion. If the tangent cone of $T$ at infinity is multiplicity-one (by Allard) or is a density $\leq 3$ plane (by Lemma \ref{lem:freq-sheeting}, Remark \ref{rem:freq-sheeting}), then $T$ is regular at infinity. Likely there is a notion of a (branched) center manifold at infinity, however one initial issue is that, unlike the local picture, it is not clear how or whether one can reduce to studying $T$ near a single plane with multiplicity (as illustrated by $\{zw=1\}\subset\C^2$). Regularity at infinity would immediately imply finite total curvature and finite genus, and would be amenable to proving a priori bounds on the total curvature also.

\vspace{3mm}

\textbf{Acknowledgments:} We thank G\'abor Sz\'ekelyhidi, Brian White, and Neshan Wickramasekera for inspiring conversations.  N.E. was support by NSF grant DMS-2506700 and a Simons Foundation travel award. This research was conducted during the period P.M. served as a Clay Research Fellow.

\section{Preliminaries}\label{sec:prelim}

We work with $n$-currents and $n$-varifolds in $\R^{n+m}$, with our main results concerning the case $n=2$ and several auxiliary results holding for general $n$. We make use of standard notation and concepts from geometric measure theory, referring the reader to \cite{simon:gmt} for general background, and \cite{Almgren_2000, DLS11, DeLellis_Spadaro_2014} for background on multi-valued functions and the theory of high-codimension area-minimizing currents. We will write $c$ for a generic constant $c\geq 1$, and $\eps,\delta,\eta$ for generic (positive) constants $<1$. Named constants, such as $c_1,c_2,\dotsc$, are fixed and do not change in value.

We identify $\R^n\cong \R^n\times\{0^m\}\subset \R^{n+m}$.  Given $x\in \R^{n+m}$, the standard open Euclidean ball of radius $\rho>0$ centered at $x$ is $B_\rho(x):= \{y\in \R^{n+m}:|y-x|<\rho\}$, and when $x\in \R^n$ we write $B_\rho^n(x):= B_\rho(x)\cap \R^n$ for the corresponding $n$-dimensional ball in $\R^n$. For $0<\sigma<\rho$ write $A_{\rho,\sigma}(x):= B_\rho(x)\setminus \overline{B_\sigma(x)}$ for the open annulus with inner radius $\sigma$ and outer radius $\rho$, and $A^n_{\rho,\sigma}(x):= A_{\rho,\sigma}(x)\cap \R^n$. Write $B_\rho := B_\rho(0)$, and define analogously $B_\rho^n$, $A_{\rho,\sigma}$, and $A^n_{\rho,\sigma}$. We will always write $d_H(A,B)$ for the Hausdorff distance between subsets $A,B\subset\R^{n+m}$, and $d_A(x)$ for the distance between $x\in \R^{n+m}$ and a subset $A\subset\R^{n+m}$.

Let $P$ be an oriented $n$-plane (typically we take $P = \R^m\times\{0^m\}$). Write $\pi_P$ for the orthogonal projection to $P$, and $\pi_P^\perp$ for the orthogonal projection to $P^\perp$. Given $z\in \R^{n+m}$, we write $d_{z+P}(x):= |\pi_P^\perp(x-z)|$ for the distance of $x$ to the affine plane $z+P$. We also define the open cylinder over $P$ of radius $\rho$ and center $z$ by
\[
C_\rho(z,P):= \{x\in \R^{n+m}:|\pi_P(x-z)|<\rho\}.
\]
For shorthand, we write $C_\rho(z) := C_\rho(z,\R^n\times\{0^m\})$ and $C_\rho := C_\rho(0)$.

\medskip
We will often consider the special case of a $Q$-valued function which lifts to a single-valued function on a $Q$-branched copy of $\R^2\setminus\{0\}$. As such, we introduce specialized notation for this setting. Let us define $QS^1:= [0,2\pi Q]/\sim$, where $\sim$ is the equivalence relation on $[0,2\pi Q]$ with the only extra relation being $0\sim 2\pi Q$. We then write $v:Q\R^2\to \R^m$ to mean a function $v:(0,\infty)\times QS^1\to \R^m$ (note that $v$ is not defined at $0\in \R^2$), and similarly $v:Q\R^2\cap A_{b,a}\to \R^m$ for a function $v:(a,b)\times QS^1\to \R^m$.

One can interpret $v:Q\R^2\to \R^m$ as a function defined on a $Q$-branched cover of $\R^2\setminus\{0\}$, and in particular in a neighborhood of any $(r,\theta)$ with $r\neq 0$, we can think of $v$ as being defined on an open subset of $\R^2$.  Away from the origin, we can therefore make sense of the standard Euclidean derivatives $D^k v$, and talk about $v$ solving a PDE of the form $G(D^2v,Dv,v)=0$ by asking $v$ to solve the PDE locally, as a single-valued function, on each branch.

Given $v : Q\R^2 \to \R^m$, write
\begin{equation}\label{eqn:prelim-1}
\graph_{Q\R^2}(v):= \{(r\cos(\theta),r\sin(\theta),v(r,\theta): r\in (0,\infty)),\, \theta\in QS^1\},
\end{equation}
and similarly define $\graph_{Q\R^2}(v)\cap A_{b,a}$ for $v : Q\R^2 \cap A_{b, a} \to \R^m$, except now the radial variable $r$ obeys $r\in (a,b)$. We always endow $\graph_{Q\R^2}(v)$ with the orientation induced by the graphing map \eqref{eqn:prelim-1}.  Write $Jv := \sqrt{ \det(\delta_{ij} + \< D_i v, D_j v\>)} : Q\R^2 \to \R$ for the Jacobian of the graphing map, and for shorthand write
\[
\int_{Q\R^2 \cap A_{b, a}} v \, dx := \int_{QS^1} \int_a^b v(r, \theta) \, r dr d\theta .
\]

We will be particularly interested in $v:Q\R^2\to \R^m$ which solves the minimal surface system
\begin{equation}\label{eqn:mss}
g^{ij}D_{ij}^2v = 0,
\end{equation}
where $g^{ij} = (g^{-1})_{ij}$ for $g = (g_{ij})_{ij}$ with $g_{ij} = \delta_{ij} + \langle D_iv,D_jv\rangle$. This can be recast as a perturbation of the Laplacian, namely
\begin{equation}\label{eqn:mss-rough}
\Delta v = F(Dv,D^2v) = Dv\star Dv\star D^2v
\end{equation}
where we use the shorthand
\[
p \star p \star q \equiv \sum_{\substack{i,j,k,l=1,2 \\\alpha, \beta, \gamma = 1, \ldots, m}} c_{\alpha \beta \gamma}^{ijkl}(p, q) p_i^\alpha p_j^\beta q_{kl}^\gamma,
\]
for analytic functions $c^{ijkl}_{\alpha\beta\gamma}(p, q)$

Given $f : (a, \infty) \times QS^1 \to \R^m$ (or simply $f: (a,\infty) \to \R^m$), we write $f = O(r^\alpha)$ to mean $|f(r,\cdot)|\leq Cr^\alpha$ for $r\gg 1$ and some constant $C$. Similarly, we write $f = O_k(r^\alpha)$ to mean $r^i D^i f = O(r^\alpha)$ for each $i\in\{0,1,\dotsc,k\}$, and $f = O_{k,\gamma}(r^\alpha)$ to mean $f = O_k(r^\alpha)$ and $r^{k+\gamma}[D^k f]_{\gamma;A_{2r,r}} = O(r^\alpha)$. Finally, write $f = O_{k,1-}(r^\alpha)$ if $f = O_{k,\gamma}(r^\alpha)$ for all $\gamma<1$, and $f = O(r^{\alpha+})$ if $f = O(r^{\alpha+\eps})$ for all $\eps>0$. Similarly, we define $O_k(r^{\alpha+})$, $O_{k,\gamma}(r^{\alpha+})$, and $O_{k,1-}(r^{\alpha+})$ analogously.

\subsection{Varifolds and currents}

If $V$ is a stationary integral $n$-varifold in $\R^{n+m}$, the density ratio $\Theta_V(x, r)$ on the ball $B_r(x)$ is defined to be
\[
\Theta_V(x, r) = \frac{\|V\|(B_r(x))}{\omega_n r^n},
\]
where $\omega_n$ is the $n$-volume of the $n$-dimensional unit ball.  The classical monotonicity formula says that $\Theta_V(x, r)$ is increasing in $r$, and is constant in $r$ if and only if $V$ is dilation-invariant centered at $x$. In particular, the quantities
\[
\Theta_V(x) := \lim_{r \to 0} \Theta_V(x, r), \quad \Theta_V(\infty) := \lim_{r \to \infty} \Theta_V(0, r)
\]
always exist (but the latter may be infinite).  Integrality implies $\Theta_V(x) \geq 1$ iff $x \in \spt \|V\|$.

Likewise, if $T$ is an area-minimizing integral $n$-current with $\del T = 0$, then the underlying integral $n$-varifold $|T|$ is stationary, and so one can define $\Theta_T := \Theta_{|T|}$.  A basic fact is that any dilation-invariant area-minimizing $2$-current in $\R^{2+m}$ is supported on a union of transverse planes, and therefore if $n = 2$ it holds that $\Theta_T(x), \Theta_T(\infty) \in \N \cup \{ \infty \}$.

We will make use of the following ``excess'' quantities.  Define
\[
E_{T, P}(z, \rho) := \rho^{-n-2} \int_{C_{\rho}(z, P)} d_{z + P}(x)^2\, d\|T\|(x)
\]
for the \emph{$L^2$ height excess} in the cylinder $C_\rho(z,P)$, and
\begin{align*}
\cE_{T, P}(z, \rho) 
& := \rho^{-n} \|T\|(C_{\rho}(z, P)) - \rho^{-n} \|\pi_{P \#} (T \res C_{\rho}(z, P))\|(B^n_\rho(z)) \\
&\equiv \frac{1}{2\rho^n} \int_{C_\rho(z, P)} |\vec{T} - \vec{P}|^2\, d\|T\| 
\end{align*}
for the \emph{mass excess} in the cylinder (with the second alternative form being true up to flipping the sign of the orientation on $P$, cf.~\cite[§5.3.1]{Federer_GMT}). Here $\vec{T}(x)$ is the orienting $n$-vector for the approximating tangent space $T_x T$ of $T$ at $x$, $\vec{P}$ the orienting $n$-vector for $P$, and $|\vec{T} - \vec{P}|$ is the norm induced on the exterior product.

Though we will not name them explicitly, we will also commonly use the $L^2$ height and tilt excesses in a ball $B_\rho(y)$ (relative to an affine plane $z+P$), defined respectively as
\[
\rho^{-n-2} \int_{B_\rho(y)} d_{z + P}(x)^2\, d\|T\|(x), \quad \rho^{-n} \int_{B_\rho(y)} \|\pi_T - \pi_P\|^2\, d\|T\|(x).
\]
Here $\pi_T|_x$ is the projection onto the approximate tangent space $T_x T$, and $\|\pi_T - \pi_P\|$ is the Frobenius norm.  The mass excess and tilt excess are both kinds of geometric $W^{1,2}$-norms, but the former ``sees'' orientation while the latter does not. We record the following elementary geometric inequality
\begin{equation*}\label{eqn:tilt-by-tilt}
\|\pi_T - \pi_P\| \leq c(n, m) |\vec{T} - \vec{P}|.
\end{equation*}

We now record well-known relationships concerning the respective excesses.

\begin{lemma}[$L^2$ controls $L^\infty$]\label{lem:Linfty-by-L2}
Let $V$ be a stationary integral $n$-varifold in $B_1$.  Then for any $\gamma \in (0, 1)$ we have
\begin{equation}\label{eqn:Linfty-by-L2-concl1}
\sup_{x \in \spt\|V\| \cap B_\gamma} d_P(x)^2 \leq c(n, m, \gamma) \int_{B_1} d_P^2\, d\|V\|,
\end{equation}
If instead $V$ is a stationary integral $n$-varifold in $C_1$, we have
\begin{equation*}\label{eqn:Linfty-by-L2-concl2}
\sup_{x \in \spt\|V\| \cap C_\gamma} d_P(x)^2 \leq c(n, m, \gamma) \int_{C_1} d_P^2\, d\|V\|,
\end{equation*}
and in particular, we have
\[
\spt\|V\| \cap C_\gamma \subset B_{\gamma+c(n, m,\gamma)E_{V, P}(0, 1)^{1/2}}.
\]
\end{lemma}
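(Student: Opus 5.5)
The proof will rest on the fact that $d_P^2$ is a subsolution for stationary varifolds, combined with the monotonicity formula. Since $V$ is stationary, for any $C^2$ function $f$ we have $\int \mathrm{div}_V(f\phi\nabla f\ldots)$, but the cleanest route is the following. The coordinate functions $x^{n+\alpha}$, $\alpha=1,\dots,m$, of $P^\perp$ satisfy $\mathrm{div}_V \nabla^V x^{n+\alpha}=0$ weakly, so $u:=d_P^2=\sum_\alpha (x^{n+\alpha})^2$ satisfies
\[
\int \langle \nabla^V u,\nabla^V\phi\rangle\, d\|V\|\leq 0 \quad\text{for all } \phi\geq 0,\ \phi\in C^1_c .
\]
More precisely, $\Delta_V u = 2|\nabla^V x^\perp|^2\geq 0$ in the weak sense. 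We then apply the mean-value inequality for nonnegative weakly subharmonic functions on stationary varifolds, which is the monotonicity identity with weight $u$ (Simon's book, \S 18 / Allard): for $x\in\spt\|V\|\cap B_\gamma$ and $r=1-\gamma$,
\[
u(x)\leq \frac{1}{\omega_n r^n}\int_{B_r(x)} u\, d\|V\| \le c(n,\gamma)\int_{B_1} d_P^2\, d\|V\|.
\]
This uses integrality only through $\Theta_V(x)\geq 1$ on the support, which gives the lower bound on the left side when we pass from $\lim_{\sigma\to0}\sigma^{-n}\int_{B_\sigma(x)}u$ to $u(x)\Theta_V(x)\geq u(x)$, using continuity of $u$. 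This proves \eqref{eqn:Linfty-by-L2-concl1}.

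The derivation of the weighted monotonicity inequality is the one technical step. We test the first variation with $X(y)=\eta(|y-x|)\,u(y)\,(y-x)$ and use $\mathrm{div}_V(y-x)=n$ and $|\nabla^\perp|y-x||^2\geq0$. This gives
\[
\frac{d}{d\sigma}\Big(\sigma^{-n}\int_{B_\sigma(x)}u\Big)\geq \sigma^{-n-1}\int_{B_\sigma(x)}(y-x)\cdot\nabla^V u,
\]
up to the usual nonnegative term. The right side equals $\tfrac12\sigma^{-n-1}\int(\sigma^2-|y-x|^2)\Delta_V u\geq 0$ after integrating by parts against $\tfrac12(\sigma^2-|y-x|^2)_+$, whose $V$-gradient is $-(y-x)^T$. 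Subharmonicity then makes this nonnegative. I expect the main care to be needed in this integration by parts with a non-smooth cutoff, which is handled by the standard approximation $\eta\to\chi_{[0,\sigma)}$.

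For the cylinder version the argument is the same. For $x\in\spt\|V\|\cap C_\gamma$ we use a ball $B_r(x)$ with $r=1-\gamma$. This ball lies in $C_1$ since $|\pi_P(y-x)|<r$, and the mean-value bound gives $d_P(x)^2\le c\int_{C_1}d_P^2$. For the final containment, with $E:=E_{V,P}(0,1)=\int_{C_1}d_P^2\,d\|V\|$ (normalized at $\rho=1$), any $x\in\spt\|V\|\cap C_\gamma$ has $|\pi_P x|<\gamma$ and $|\pi_P^\perp x|\le cE^{1/2}$. Hence $|x|\le\gamma+cE^{1/2}$.
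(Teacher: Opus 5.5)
Your proof is correct, and for the cylinder statement it takes a genuinely different and somewhat cleaner route than the paper.

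\textbf{The paper's route.} The paper cites Allard for the ball estimate. It then derives the cylinder estimate from it by splitting into two cases.
\begin{enumerate}
\item Points with $d_P(x)\le\eta$ lie in a smaller concentric ball $B_{(1+\gamma)/2}\subset B_1\subset C_1$, where the ball estimate applies.
\item For points with $d_P(x)\ge\eta$, it uses $d_P\ge d_P(x)/2$ on $B_{\eta/2}(x)$ together with the monotonicity lower bound $\|V\|(B_{\eta/2}(x))\ge\omega_n(\eta/2)^n$.
\end{enumerate}

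\textbf{Your route.} You prove the mean-value inequality $d_P(x)^2\le(\omega_n r^n)^{-1}\int_{B_r(x)}d_P^2\,d\|V\|$ centered at each support point. This rests on the weak subharmonicity of $d_P^2$, the weighted monotonicity identity, and $\Theta_V(x)\ge1$. You then observe that $B_{1-\gamma}(x)\subset C_1$ whenever $|\pi_P x|<\gamma$, however far $x$ is from $P$.

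\textbf{Comparison.} Your argument handles both statements at once, with no case distinction. It also gives the explicit constant $(\omega_n(1-\gamma)^n)^{-1}$, independent of $m$. The paper's route is shorter on the page because it only invokes black boxes (Allard's estimate and plain monotonicity). In fact, its second case is just a crude version of your mean-value bound.

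\textbf{Two small remarks.} The integration-by-parts issue you flag disappears if you keep a smooth nonincreasing cutoff $\zeta(|y-x|/\sigma)$ throughout. Then $\Psi(y)=\int_{|y-x|}^{\infty}s\,\zeta(s/\sigma)\,ds$ is a nonnegative $C^1_c$ function with $\nabla^V\Psi=-\zeta(|y-x|/\sigma)(y-x)^T$. So weak subharmonicity applies directly, and you pass to the indicator only at the end. Also, the unfinished sentence about $\int\mathrm{div}_V(f\phi\nabla f\ldots)$ in your first paragraph plays no role and should be deleted.
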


\begin{proof}
The first inequality is well-known, see e.g.~\cite[Theorem 7.5(6)]{Allard_1972}.  To see the second inequality, we break into two cases. If $x \in \spt\|V\| \cap C_\gamma$ satisfies $d_P(x) \leq \eta \leq 1-\gamma$ for suitably small $\eta = \eta(n,m,\gamma)\in (0,1)$, then we have $\spt\|V\| \cap C_\gamma \subset B_{(1+\gamma)/2}$, and hence we can apply \eqref{eqn:Linfty-by-L2-concl1} in $B_{(1+\gamma)/2}$ to get
\[
d_P(x)^2 \leq c(n, m, \gamma) \int_{B_1} d_P^2\, d\|V\| \leq c E_{V, P}(0, 1).
\]
On the other hand, if $d_P(x) \geq \eta$, then in the ball $B_{\eta/2}(x)$ we have $d_P \geq d_P(X)/2$ and by monotonicity $\|V\|(B_{\eta/2}(X)) \geq \omega_n (\eta/2)^n$, and so we can estimate
\[
(d_P(x)/2)^2 \omega_n (\eta/2)^n \leq \int_{B_{\eta/2}} d_P^2\, d\|V\| \leq E_{V, P}(0, 1). \qedhere
\]
\end{proof}

\begin{lemma}[$L^2$ height controls tilt]\label{lem:tilt-by-L2}
Let $V$ be a stationary rectifiable $n$-varifold in $B_1(0)$.  Then for any $z \in \R^{n+m}$ and $\gamma \in (0, 1)$ we have
\[
\int_{B_\gamma} \|\pi_T - \pi_P\|^2\, d\|V\| \leq c(n, \gamma) \int_{B_1} d_{z + P}^2\, d\|V\|.
\]
\end{lemma}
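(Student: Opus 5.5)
This is the standard Caccioppoli-type tilt inequality for stationary varifolds. It follows from the first variation formula tested against a vector field built from the height function relative to $z+P$.

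\textbf{Setup.} Translate so that $z$ lies in $P^\perp$; the integrand $d_{z+P}$ depends only on $\pi_P^\perp(x-z)$. Set
\[
X(x) = \phi(x)^2\,\pi_P^\perp(x-z),
\]
where $\phi\in C^1_c(B_1)$ satisfies $\phi\equiv 1$ on $B_\gamma$, $0\le\phi\le1$, and $|D\phi|\le 2/(1-\gamma)$. Stationarity gives $\int \mathrm{div}_{T}X\,d\|V\|=0$. Write $S=\pi_T$ for the approximate tangent projection and $\pi^\perp=\pi_P^\perp$.

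\textbf{Computing the divergence.} We have
\[
\mathrm{div}_T X = \phi^2\,\tr(S\pi^\perp) + 2\phi\,\langle S D\phi,\ \pi^\perp(x-z)\rangle .
\]
Since $S$ and $\pi_P$ are both orthogonal projections of rank $n$,
\[
\tr(S\pi^\perp) = n-\tr(S\pi_P)=\tfrac12\|S-\pi_P\|^2 .
\]
So the principal term is exactly half the squared tilt.

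\textbf{Absorbing the cross term.} Stationarity therefore yields
\[
\tfrac12\int \phi^2\|S-\pi_P\|^2\,d\|V\| = -2\int \phi\,\langle SD\phi,\pi^\perp(x-z)\rangle\,d\|V\| .
\]
The main obstacle is this cross term, which carries no smallness on its own. The key observation is that
\[
\langle SD\phi,\pi^\perp(x-z)\rangle=\langle D\phi,\ S\pi^\perp(x-z)\rangle,
\]
and since $\pi_P\pi^\perp=0$ we have $S\pi^\perp=(S-\pi_P)\pi^\perp$. Hence
\[
|S\pi^\perp(x-z)|\le\|S-\pi_P\|\,d_{z+P}(x).
\]
Cauchy--Schwarz with a small parameter then bounds the right-hand side by
\[
\tfrac14\int\phi^2\|S-\pi_P\|^2\,d\|V\| + 4\int|D\phi|^2 d_{z+P}^2\,d\|V\| .
\]

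\textbf{Conclusion.} Absorb the first term into the left-hand side and use $\phi=1$ on $B_\gamma$. This gives
\[
\int_{B_\gamma}\|\pi_T-\pi_P\|^2\,d\|V\|\le 16\sup|D\phi|^2\int_{B_1}d_{z+P}^2\,d\|V\|\le \frac{64}{(1-\gamma)^2}\int_{B_1}d_{z+P}^2\,d\|V\|,
\]
so the constant is $c(n,\gamma)$ as claimed.

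Only stationarity and rectifiability are used; integrality is not needed. The test field $X$ is Lipschitz with compact support in $B_1$, so it is admissible in the first variation formula.
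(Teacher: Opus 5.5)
Your proof is correct, and it is the standard Caccioppoli-type argument: test the first variation with $\phi^2\pi_P^\perp(x-z)$, use $\tr(\pi_T\pi_P^\perp)=\tfrac12\|\pi_T-\pi_P\|^2$, and absorb the cross term via $\pi_T\pi_P^\perp=(\pi_T-\pi_P)\pi_P^\perp$. The paper gives no argument of its own and simply cites Allard's Lemma 8.13, whose proof rests on this same computation; the absorption step also needs $\|V\|(\spt\phi)<\infty$, which holds because $\|V\|$ is a Radon measure on $B_1$.
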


\begin{proof}
The proof is standard, see e.g.~\cite[Lemma 8.13]{Allard_1972}.
\end{proof}

The following lemma is the first relationship which relies on being area-minimizing.

\begin{lemma}[$L^2$ controls mass excess {\cite[Lemma 2.7]{KW23a}}]\label{lem:mass-by-L2}
Let $T$ be an area-minimizing integral $n$-current in $C_1(0)$ satisfying
\begin{equation*}\label{eqn:mass-by-L2-hyp}
(\del T) \llcorner C_1(0) = 0, \quad \sup_{x \in \spt\|T\| \cap C_1(0)} d_P(x) \leq 1.
\end{equation*}
Then for any $\gamma \in (0, 1)$ we have
\begin{equation*}\label{eqn:mass-by-L2-concl1}
\cE_{T, P}(0, \gamma) \leq c(n, m, \gamma) E_{T, P}(0, 1) .
\end{equation*}
\end{lemma}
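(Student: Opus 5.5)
The proof rests on comparing $T$ with a competitor that flattens the current towards $P$ inside a slightly smaller cylinder. Write $\cE_{T,P}(0,\gamma)$ as the mass of $T$ in $C_\gamma$ minus the mass of the projection $\pi_{P\#}(T\res C_\gamma)$. The projection is an integral current on $B^n_\gamma$ with integer multiplicity; its boundary on $\partial B^n_\gamma$ is not controlled, so the gluing has to be done carefully.

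\textbf{Step 1 (slicing).} I would pick a radius $s\in(\gamma,(1+\gamma)/2)$ at which the slice $\langle T, |\pi_P|, s\rangle$ is well-behaved. Its mass should be bounded by the tilt and height integrals over the annulus, divided by $(1-\gamma)$. First I use Lemma \ref{lem:Linfty-by-L2} in the cylinder, together with the hypothesis $d_P\le1$, to keep $\spt T\cap C_{(1+\gamma)/2}$ inside a bounded region. Then Lemma \ref{lem:tilt-by-L2}, applied on balls covering $C_{(1+\gamma)/2}\cap\spt T$, bounds the tilt excess in the cylinder by $cE_{T,P}(0,1)$.

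\textbf{Step 2 (competitor).} I would take the straight-line homotopy $h(t,x)=\pi_P(x)+t\,\pi_P^\perp(x)$ and set
\[
S:=\pi_{P\#}(T\res C_s) + h_\#(\llbracket0,1\rrbracket\times\langle T,|\pi_P|,s\rangle).
\]
By the homotopy formula, $S$ has the same boundary as $T\res C_s$. The homotopy piece has mass at most $c\int d_P\,d\|\langle T,|\pi_P|,s\rangle\|$. To estimate this sharply I would use Cauchy--Schwarz and the coarea formula, and average over $s$. The bound I aim for is
\[
\int_\gamma^{(1+\gamma)/2}\!\!\int d_P\, d\|\langle T,|\pi_P|,s\rangle\|\,ds\le \int_{C_{(1+\gamma)/2}\setminus C_\gamma} d_P\,|\nabla^T|\pi_P||\,d\|T\|.
\]
The naive $L^1$ bound on $d_P$ gives only $E^{1/2}$, and that is not good enough.

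\textbf{Step 3 (main obstacle: getting linear rather than square-root dependence).} This is where the main difficulty lies. Following the standard Almgren/De Lellis--Spadaro approach in \cite{KW23a}, I would \emph{not} compare masses directly. Instead I would use the identity $\cE = \tfrac12\int|\vec T-\vec P|^2$ together with the fact that mass minus projected mass on the cylinder equals $\int(1-\langle\vec T,\vec P\rangle)$.

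Minimality gives $\mathbf M(T\res C_s)\le \mathbf M(S)$. Also $\mathbf M(\pi_{P\#}(T\res C_s))\le \int_{C_s}\langle \vec T,\vec P\rangle\,d\|T\|$, with equality up to cancellation, which only helps. Combining these,
\[
\cE_{T,P}(0,s)\,s^n\le \mathbf M\bigl(h_\#(\llbracket0,1\rrbracket\times\langle T,|\pi_P|,s\rangle)\bigr).
\]
The key refinement is that the homotopy piece has mass bounded by $\int d_P\cdot|\vec T-\vec P|$ plus a term of size $d_P^2$ over the slice. This holds because $h$ moves points only in the normal direction, so the area of the swept surface, measured against the slice tangent, picks up tilt only through its normal component. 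Averaging in $s$ and applying Cauchy--Schwarz then gives
\[
\cE(0,\gamma)\le c\,\bigl(E^{1/2}\,\cE(0,(1+\gamma)/2)^{1/2}+E\bigr).
\]
Absorbing by Young's inequality together with the tilt bound from Step 1, or alternatively via a standard iteration over radii between $\gamma$ and $1$ (Simon's absorption lemma), yields $\cE_{T,P}(0,\gamma)\le c(n,m,\gamma)E_{T,P}(0,1)$.
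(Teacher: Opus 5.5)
The paper gives no argument for this lemma; it cites \cite[Lemma 2.7]{KW23a} (cf.\ \cite[Lemma 4.1]{DLMS23}). Your proposal has a genuine gap at the ``key refinement'' in Step 3, and that claim is false.

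The homotopy piece $h_\#(\llbracket 0,1\rrbracket\times\langle T,|\pi_P|,s\rangle)$ is a vertical wall. Moving the slice in the normal direction sweeps out area proportional to the normal displacement, regardless of the tilt. For example, take $T=\llbracket P+he\rrbracket$ with $e\in P^\perp$ a unit vector and $0<h\le 1$. Then $\vec T=\vec P$ and $d_P\equiv h$, but the wall is $\partial B^n_s\times[0,h]e$, of mass $n\omega_n s^{n-1}h$, which is not bounded by $c\,h^2s^{n-1}$. The graph of $x\mapsto\epsilon x_1 e$ shows the same failure when tilt is present: the wall has mass of order $\epsilon$, while $\int(d_P|\vec T-\vec P|+d_P^2)$ over the slice is of order $\epsilon^2$.

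The competitor ``projection inside $C_s$ plus a vertical wall'' is therefore intrinsically first order in the height. It can only give $\cE_{T,P}(0,\gamma)\lesssim\int d_P\,d\|T\|\lesssim E^{1/2}$, and neither averaging in $s$ nor absorption repairs this. There is also a secondary problem: your closing absorption relies on Step 1, but Lemma \ref{lem:tilt-by-L2} controls the unoriented tilt $\|\pi_T-\pi_P\|$, not $|\vec T-\vec P|$. The latter can be of order one on orientation-reversed pieces.

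The missing idea is to spread the transition between $T$ and its projection over an annulus of fixed width, instead of using a wall. Fix $s\in((1+\gamma)/2,1)$ with $T\res C_s$ integral (true for a.e.\ $s$). Take $\lambda:[0,\infty)\to[0,1]$ with $\lambda=0$ on $[0,\gamma]$, $\lambda=1$ on $[(1+\gamma)/2,\infty)$, and $|\lambda'|\le 4(1-\gamma)^{-1}$, and set $F(x)=\pi_P(x)+\lambda(|\pi_P(x)|)\,\pi_P^\perp(x)$. Since $F$ is the identity near $\partial C_s$, $F_\#(T\res C_s)$ is an admissible competitor. Since $F=\pi_P$ on $C_\gamma$, we have $F_\#(T\res C_\gamma)=\pi_{P\#}(T\res C_\gamma)$, so cancellations in the projection are accounted for automatically. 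Minimality then gives
\[
\gamma^n\cE_{T,P}(0,\gamma)\le\int_{C_{(1+\gamma)/2}\setminus C_\gamma}\bigl(|\wedge^n DF\,\vec T|-1\bigr)\,d\|T\|.
\]
Write $DF=(\pi_P+\lambda\pi_P^\perp)+\pi_P^\perp(x)\otimes\nabla\lambda$, a contraction plus a rank-one term. Choose an orthonormal frame of $T_xT$ in which only one vector is not orthogonal to $\nabla\lambda$, and expand the resulting Gram determinant. This gives
\[
|\wedge^n DF\,\vec T|\le 1+c(n)\,|\nabla\lambda|\,d_P\,\|\pi_T-\pi_P\|+c(n)\,|\nabla\lambda|^2d_P^2 .
\]
This is the correct form of your refinement: the first-order term carries an unoriented tilt factor, and the remainder is quadratic in the height. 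Cauchy--Schwarz together with your Step 1 bound $\int_{C_{(1+\gamma)/2}\setminus C_\gamma}\|\pi_T-\pi_P\|^2\,d\|T\|\le c(n,m,\gamma)E_{T,P}(0,1)$ then yields $\cE_{T,P}(0,\gamma)\le c(n,m,\gamma)E_{T,P}(0,1)$ directly. No absorption and no a priori mass bound are needed.
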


\begin{proof}
This is the content of \cite[Lemma 2.7]{KW23a} (cf.~\cite[Lemma 4.1]{DLMS23}). 
\end{proof}

\begin{lemma}[Zero projection implies zero mass]\label{lem:zero-mass}
Let $T$ be an area-minimizing integral $n$-current in $C_1(0)$ satisfying
\begin{gather}\label{eqn:zero-mass-hyp}
(\del T) \res C_1(0) = 0, \quad \Theta_T(0, 1) \leq \theta + \eps, \quad E_{T, P}(0, 1) \leq \eps^2.
\end{gather}
Then, provided $\eps = \eps(n, m, \theta)$ is sufficiently small, we have
\begin{equation}\label{eqn:zero-mass-concl}
\pi_{P \#} (T \res C_1) = q\llbracket B^n_1(0) \rrbracket \text{ for some $|q| \leq \theta$.}
\end{equation}
Moreover, if $q = 0$, then for any $\gamma \in (0, 1)$ we have $T \res C_\gamma(0) = 0$ provided $\eps = \eps(n, m, \theta, \gamma)$ is sufficiently small.
\end{lemma}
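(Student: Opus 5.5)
First I will show that the support of $T$ lies close to $P$ inside a slightly smaller cylinder. Then I will show that the projected current $\pi_{P\#}(T\res C_1)$ is a constant-multiplicity disk. Finally I will handle the case $q=0$ by a mass comparison.

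\emph{Step 1 (support near $P$).} Since $\|T\|(C_1)\le c\,\Theta_T(0,1)$ is bounded, Lemma \ref{lem:Linfty-by-L2} applies to the stationary varifold $|T|$ in $C_1$. It gives $\sup_{\spt\|T\|\cap C_\gamma} d_P \le c(n,m,\gamma)\eps$ for every $\gamma<1$. In particular $\spt T\cap C_\gamma$ is compact in the $P^\perp$ directions.

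\emph{Step 2 (constancy theorem).} Set $S:=\pi_{P\#}(T\res C_1)$, an integral $n$-current in $B^n_1$. Since $\pi_P$ is a linear map and $\partial T\res C_1=0$, we get $\partial S\res B^n_1=\pi_{P\#}(\partial(T\res C_1))\res B^n_1$. By Step 1, $T\res C_\gamma$ has support bounded in the $P^\perp$ directions, so the boundary of $T\res C_\gamma$ lies over $\partial B^n_\gamma$. Hence $\partial S\res B^n_\gamma=0$ for every $\gamma<1$. To keep this rigorous I would use a slicing argument, replacing $T\res C_1$ by $T\res C_\gamma$ and letting $\gamma\to1$. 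Since $S$ is then an $n$-current in $\R^n$ with no boundary in $B^n_1$, the constancy theorem gives $S=q\llbracket B^n_1\rrbracket$ for some $q\in\Z$. Because $\pi_P$ is $1$-Lipschitz on $n$-vectors, $|q|\,\omega_n\le\mathbf M(S)\le\|T\|(C_1)$. On the other hand $\|T\|(C_1)\le\|T\|(B_{1+c\eps})$ up to the small tail controlled by Step 1 near $\partial C_1$. The cleanest route is to work on $C_\gamma$ with $\gamma$ close to $1$ and use monotonicity. This yields $|q|\le\theta+c\eps+o_{\gamma\to1}(1)<\lfloor\theta\rfloor+1$, so $|q|\le\theta$ by integrality once $\eps$ is small.

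\emph{Step 3 (the case $q=0$).} This is the step I expect to be the main obstacle. Suppose $q=0$ and fix $\gamma$. Choose $\gamma'\in(\gamma,1)$. By Lemma \ref{lem:mass-by-L2}, applicable after Step 1 gives $d_P\le1$ on a slightly smaller cylinder (rescale as needed), we have
\[
\cE_{T,P}(0,\gamma')\le c\,E_{T,P}(0,1)\le c\eps^2.
\]
Since the projected mass is $|q|\omega_n\gamma'^n=0$, the definition of mass excess gives $\|T\|(C_{\gamma'})\le c\gamma'^{n}\eps^2$. If some point $x\in\spt T\cap C_\gamma$ existed, then by Step 1 it lies in $B_{\gamma+c\eps}$. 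Monotonicity then gives $\|T\|(B_{r}(x))\ge\omega_n r^n$ with $r=(\gamma'-\gamma)/2$, and $B_r(x)\subset C_{\gamma'}$. This contradicts the mass bound once $\eps^2\ll(\gamma'-\gamma)^n$. Therefore $T\res C_\gamma=0$.

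The delicate point is the bookkeeping of rescalings in Lemma \ref{lem:mass-by-L2}, whose hypothesis is stated on $C_1$ with $d_P\le1$. I would first apply Step 1 on $C_{\gamma''}$ with $\gamma''\in(\gamma',1)$. I would then rescale $C_{\gamma''}$ to $C_1$, which makes $E$ of size at most $c\eps^2$ and $d_P\le c\eps\le1$, and apply the lemma there.
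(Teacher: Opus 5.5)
Your proposal is correct. Steps 1 and 2 follow the paper's argument: the constancy theorem, then
\[
|q|\omega_n\gamma^n\le\|T\|(C_\gamma)\le\|T\|(B_{\gamma+c\eps})\le(\gamma+c\eps)^n\omega_n(\theta+\eps),
\]
using Lemma \ref{lem:Linfty-by-L2} and monotonicity. Two small points here.
\begin{itemize}
\item Your $o_{\gamma\to1}(1)$ term is unnecessary. For any fixed $\gamma$ the display already gives $|q|\le(1+c\eps/\gamma)^n(\theta+\eps)$.
\item The aside $\|T\|(C_1)\le c\,\Theta_T(0,1)$ in Step 1 is neither justified nor needed. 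Lemma \ref{lem:Linfty-by-L2} requires no mass bound.
\end{itemize}
Defining $\pi_{P\#}(T\res C_1)$ as a limit of $\pi_{P\#}(T\res C_\gamma)$ is a sensible refinement. The paper skips it, even though $\spt\|T\|$ may be unbounded near $\del C_1$.

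The genuine difference is the $q=0$ case.
\begin{itemize}
\item \emph{The paper's route.} It argues by contradiction and compactness. A sequence $T_i$ with $\eps_i\to0$, $q=0$ and $T_i\res C_\gamma\neq0$ subconverges to a minimizer $T'$, as currents, as varifolds, and in Hausdorff distance of supports. This $T'$ has zero height excess and zero projection, hence vanishes. But Lemma \ref{lem:Linfty-by-L2} gives support points of $T_i$ in $C_\gamma$ close to $P$, and these must survive in the limit.
\item \emph{Your route.} You argue directly. Lemma \ref{lem:mass-by-L2}, rescaled from $C_{\gamma''}$ to $C_1$, gives $\|T\|(C_{\gamma'})=\gamma'^n\cE_{T,P}(0,\gamma')\le c\eps^2$, since the projected mass vanishes. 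Monotonicity at a support point in $C_\gamma$ forces $\|T\|(C_{\gamma'})\ge\omega_n((\gamma'-\gamma)/2)^n$, which is a contradiction for small $\eps$.
\end{itemize}

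What each buys:
\begin{itemize}
\item Your version is effective, with explicit dependence of $\eps$ on $\gamma$. It also avoids compactness of minimizers and convergence of supports.
\item It puts the weight on Lemma \ref{lem:mass-by-L2} holding with no hypothesis on the projection. That is how the paper states the lemma, so the use is legitimate.
\item The paper's version needs only qualitative compactness, and never invokes that excess estimate.
\item You only need $\|T\|(C_{\gamma'})$ to be small, not the sharp quadratic bound. So the elementary competitor would already suffice: $\pi_{P\#}(T\res C_\rho)=0$ plus the straight-line homotopy over a good slice $\langle T,|\pi_P|,\rho\rangle$, $\rho\in(\gamma',\gamma'')$. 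This gives $\|T\|(C_{\gamma'})\le c\eps$, which is enough for the monotonicity contradiction.
\end{itemize}
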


\begin{proof}
By the constancy theorem for currents we know that $\pi_{P\#}(T\res C_1) = q\llbracket B^n_1(0)\rrbracket$ for some integer $q\in \Z$.  Fix any $\gamma\in (0,1)$.  We know that, up to changing the orientation on $P$,
\[
\gamma^{-n}\|T\|(C_\gamma) - \gamma^{-n}|q|\omega_n = \cE_{T,P}(0,\gamma) \geq 0
\]
and so $|q|\omega_n \leq \|T\|(C_\gamma)$. But by Lemma \ref{lem:Linfty-by-L2} and hypothesis \eqref{eqn:zero-mass-hyp}, provided $\eps(n, m, \gamma)$ is sufficiently small, we have
\begin{equation}\label{eqn:zero-1}
    \|T\|(C_\gamma) \leq \|T\|(B_{\gamma+c\eps}) \leq (\gamma+c\eps)^n\Theta_T(0,1)\omega_n \leq \theta\omega_n + c(n,m,\theta)\eps.
\end{equation}
and so combining these inequalities we get that $|q|\leq \theta + c(n,m,\theta)\eps$, and hence as $q\in \Z$ if we choose $\eps =\eps(n,m,\theta)$ sufficiently small we get $|q|\leq\theta$.

For the second claim we argue by contradiction.  Suppose, for a fixed $\gamma \in (0, 1)$, there are sequences $\eps_i \to 0$, $T_i$ satisfying the hypotheses \eqref{eqn:zero-mass-hyp} (with $\eps_i, T_i$ in place of $\eps, T$) and
\[
\pi_{P\#}(T_i \res C_1) = 0,
\]
but for which $T_i \res C_\gamma \neq 0$.

From \eqref{eqn:zero-1}, we know that for any $\gamma' \in (0, 1)$ we have uniform mass bounds $\|T_i\|(C_{\gamma'}) \leq c(n, m, \theta)$ for all $i \gg 1$, and therefore after passing to a subsequence we can find an area-minimizing integral $n$-current $T'$ (without boundary) in $C_1$ so that $T_i \to T'$ as currents in $C_1$, $|T_i| \to |T|$ as varifolds in $C_1$, and $\spt \|T_i\| \to \spt \|T'\|$ in the local Hausdorff distance in $C_1$. But then $T'$ must satisfy
\[
E_{T', P}(0, 1) = 0, \quad \pi_{P\#}(T' \res C_1) = 0,
\]
which implies $T' \res C_1 = 0$.  But now from our contradiction hypotheses and Lemma \ref{lem:Linfty-by-L2} we can find $x_i \in \spt \|T_i\| \cap C_\gamma$ with $d_P(x_i) \to 0$, which means by the local Hausdorff convergence of supports there must be some $x \in \spt \|T'\| \cap C_{(1+\gamma)/2}$, which is a contradiction.
\end{proof}

We also require the following improved $L^2-L^\infty$ estimate for area-minimizers.
\begin{theorem}[Improved $L^2-L^\infty$ estimate]\label{thm:improved-Linfty-L2}
Let $Q \in \N$, $P = \R^n \times \{0^m\}$, $p_1, \ldots, p_N \in \R^m$, and write $\bm{\pi} = \cup_{i=1}^N (p_i + P)$ for the union of parallel planes.  Let $T$ be an area-minimizing integral $n$-current in $C_1(0)$ satisfying
\[
\|T\|(C_1(0)) \leq (Q+\tfrac{1}{2})\omega_n, \quad \pi_{P\#}(T \res C_1(0)) = Q\llbracket B_1^n \rrbracket, 
\]
\[
E := \int_{C_1} d(y, \bm{\pi})^2\, d\|T\| \leq 1.
\]
Then for any $\gamma < 1$, we have
\[
\spt\|T\| \cap C_\gamma(0) \subset \{ x : d(x, \bm{\pi}) \leq c E^{1/2} \}.
\]
for $c = c(n, m, Q, N, \gamma) > 0$.
\end{theorem}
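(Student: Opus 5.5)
Take any $x \in \spt\|T\| \cap C_\gamma$, set $d := d(x,\bm{\pi})$, and fix a small constant $\eta = \eta(n,m,Q,N,\gamma)$ with $\eta \leq (1-\gamma)/4$. The argument splits according to whether $d$ is large or small compared with $\eta$; in both cases the goal is the bound $d^2 \leq cE$.

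\textbf{Case $d \geq \eta$.} On $B_{\eta/2}(x)$ we have $d(\cdot,\bm{\pi}) \geq d/2$. Monotonicity gives $\|T\|(B_{\eta/2}(x)) \geq \omega_n(\eta/2)^n$, and $B_{\eta/2}(x) \subset C_1$. Hence
\[
(d/2)^2\omega_n(\eta/2)^n \leq E,
\]
which is the claim, exactly as in the proof of Lemma \ref{lem:Linfty-by-L2}.

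\textbf{Case $d < \eta$.} Let $p_i$ be a nearest sheet to $x$. The obstacle is that the $p_i$ are not assumed to be separated, so one cannot simply localize to a single sheet. I would handle this by induction on $N$, in the spirit of a Simon/Wickramasekera-type $L^2$–$L^\infty$ argument.
\begin{itemize}
\item[(i)] If the sheets meeting $B_{2\eta}(x)$ can be grouped into clusters that are pairwise separated by much more than $\eta$, then in a small cylinder $C_r(\pi_P x)$ only one cluster is seen. Replacing $\bm{\pi}$ by that cluster reduces $N$. One then needs that $\pi_{P\#}$ of $T$ restricted to the relevant slab is some $q\llbracket B^n_r\rrbracket$ with $q\leq Q$. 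This should follow from the constancy theorem, as in Lemma \ref{lem:zero-mass}, together with the mass bound $\Theta \leq Q+1/2$, which prevents extra mass from appearing. The induction hypothesis, applied after rescaling, then gives the bound.
\item[(ii)] If all sheets near $x$ lie within $c\eta$ of each other, then $d(\cdot,\bm{\pi})$ and $d(\cdot, p_i+P)$ are comparable up to an additive term $\lesssim \max_j |p_j - p_i|$. I would handle this case by a compactness/contradiction argument. Suppose $T_k$ are counterexamples, with $E_k \to 0$ and $d_k^2/E_k \to \infty$. Rescale vertically by $d_k$, the distance from the bad point. Lemma \ref{lem:Linfty-by-L2} and Lemma \ref{lem:tilt-by-L2} give that the $T_k$ converge to $Q\llbracket P\rrbracket$ in $C_1$. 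Consider the blow-up of $T_k$ relative to $\bm{\pi}_k$ at height scale $d_k$. The rescaled union of planes converges, after passing to a subsequence, to a union of at most $N$ parallel planes. The normalized $L^2$ distance from $T_k$ to it tends to zero, while a point of $T_k$ stays at rescaled distance $\approx 1$. Using the projection hypothesis, Almgren's Lipschitz approximation and the strong Almgren/De Lellis–Spadaro estimates ($L^2$ height controls $L^\infty$ away from a small bad set, with mass of the bad set controlled by excess), one should get a contradiction.
\end{itemize}

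The main obstacle is case (ii): ruling out a thin tentacle of $T$ that carries little $L^2$ mass relative to $\bm{\pi}$ but reaches far from it. Monotonicity alone only yields a bound like $d^{n+2} \lesssim E$, which is worse than the desired $d^2 \lesssim E$ when $d$ is small. The fix I would try first is to apply monotonicity at $x$ at the scale $r = d/2$ rather than at the fixed scale $\eta$. This gives
\[
d^2\, d^n \lesssim \int_{B_{d/2}(x)} d(\cdot,\bm{\pi})^2\, d\|T\|.
\]
I would combine this with a covering argument and the fact that, by the mass bound and the projection condition, the Lipschitz approximation has small bad set. That should let the vertical distance of such points be compared against the total excess through a Harnack/mean-value inequality for the subharmonic function $d(\cdot,p_i+P)^2$ restricted to $T$. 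This last ingredient is where the area-minimizing hypothesis, via Lemma \ref{lem:mass-by-L2}, would enter.
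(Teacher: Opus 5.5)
There is a genuine gap, and it sits exactly where the content of the theorem lies. For context, the paper proves nothing here; it quotes \cite[Theorem 3.2 \& Corollary 3.3]{DLMS23} or \cite[Theorem 2.15]{KW23b}.

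Your case $d\geq\eta$ is correct, and case (i) can be made to work, but notice how little they buy. Monotonicity at a fixed scale traps $\spt\|T\|\cap C_\gamma$ in the $\eta$-neighborhood of $\bm{\pi}$ once $\eta^{n+2}\gtrsim E$, and not below that scale. Combine this with merging sheets closer than $KE^{1/2}$, your cluster decomposition, and the single-plane estimate of Lemma \ref{lem:Linfty-by-L2} on each piece. That settles (roughly) every configuration except those in which some sheets are separated by an amount $s$ with $E^{1/2}\ll s\lesssim E^{1/(n+2)}$. That window is the entire ``improved'' content, and it is your case (ii), where none of the proposed tools works:
\begin{itemize}
\item The function $d(\cdot,\bm{\pi})^2=\min_i d(\cdot,p_i+P)^2$ is not subharmonic on $T$. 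The mean-value inequality for a single $d(\cdot,p_i+P)^2$ bounds it by $\int d(\cdot,p_i+P)^2\,d\|T\|$, which contains $|p_i-p_j|^2$ times the mass near the other sheets. On a ball of radius $\rho$ in which only one sheet matters, it gives $d^2\lesssim\rho^{-n}E$. This is sharp only for $\rho\sim 1$, whereas other sheets intervene once $\rho\gtrsim s$.
\item The compactness argument produces no limit. A purely vertical dilation by $d_k$ destroys minimality (even stationarity). An isotropic blow-up at scale $d_k$ has normalized excess $d_k^{-n-2}E_k$, which need not tend to $0$, since you only know $d_k^2/E_k\to\infty$.
\item Almgren's approximation and Lemma \ref{lem:mass-by-L2} refer to the single plane $P$. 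The bad set has mass $\lesssim\cE_{T,P}^{1+\gamma}$, and the only available bound is $\cE_{T,P}\lesssim E+\max_{i,j}|p_i-p_j|^2$. A tentacle reaching height $d$ needs only mass $\sim d^n$, so it can hide in the bad set with $d\gg E^{1/2}$.
\end{itemize}

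What is missing is a quantitative use of minimality together with the orientation hypothesis $\pi_{P\#}(T\res C_1)=Q\llbracket B^n_1\rrbracket$. It must show that, when sheets are separated by $s\gg E^{1/2}$, no part of $T$ sits between them at distance $\gg E^{1/2}$. Then $T\res C_\gamma$ splits into pieces near single sheets, to which the single-plane estimate applies. The mechanisms you propose for this step (monotonicity at scale $d/2$, mean-value inequalities, a covering) use only stationarity. For stationary varifolds the estimate fails in exactly this window.

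Consider the catenoidal neck $\{|x|=\epsilon\cosh(z/\epsilon)\}\cap C_1\subset\R^3$. It is stationary with mass $\approx 2\pi\leq(2+\tfrac12)\omega_2$. Take $\bm{\pi}=\{z=\pm h\}$ with $h=\epsilon\,\mathrm{arccosh}(1/\epsilon)\approx\epsilon\log(2/\epsilon)$. Then $\int d(\cdot,\bm{\pi})^2\,d\cH^2\sim\epsilon^2$, but $\sup d(\cdot,\bm{\pi})^2=h^2\sim\epsilon^2\log^2(1/\epsilon)$, and the separation $2h$ lies in the window. Its two ends carry opposite orientations, so as a current it has zero projection and is not minimizing. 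Only the orientation and minimality hypotheses exclude it, and in your case (ii) these enter only through single-plane statements that cannot see $\bm{\pi}$.

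A correct proof must exploit area-minimization at all scales between $E^{1/2}$ and $E^{1/(n+2)}$. Your proposal gives no mechanism for this, and that is precisely what the cited works supply.
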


\begin{proof}
This follows from \cite[Theorem 3.2 \& Corollary 3.3]{DLMS23} or \cite[Theorem 2.15]{KW23b}.
\end{proof}

\subsection{Multi-valued functions and Dirichlet-minimizers}

In this section we recall some standard facts about multi-valued functions which minimize the Dirichlet energy. Standard references are \cite{Almgren_2000, DLS11, DeLellis_Spadaro_2014}.

We write $\cA_Q(\R^m)$ for the space of unordered $Q$-tuples, identified by sums of $Q$ Dirac point masses in $\R^m$. We write $\cG$ for the corresponding Almgren metric on $\cA_Q(\R^m)$.  A $W^{1,2}$ function $v:B^n_1\to \cA_Q(\R^m)$ is \emph{Dirichlet-minimizing} if
\[
\int_{B_1} |Dv|^2 \leq \int_{B_1} |Du|^2
\]
for any $W^{1,2}$ function $u:B_1\to \cA_Q(\R^m)$ which obeys $u = v$ $\cL^n$-a.e.~on $B_1\setminus\Omega$ for some open $\Omega\subset\subset B^n_1$. It is a standard fact (see e.g.~\cite[Theorem 0.9]{DLS11}) that Dirichlet-minimizers are $C^{0,\alpha}$ Hölder continuous for some universal $\alpha = \alpha(n,Q)\in (0,1)$.

If $v:B^n_1\to \cA_Q(\R^m)$ is Dirichlet-minimizing, we call $x\in B_1$ a \emph{regular point} if there exists an open neighborhood $U_x\subset B_1$ of $x$ such that $v|_{U_x} = \sum^Q_{j=1}\llbracket v_j\rrbracket$ for some analytic functions $v_1,\dotsc,v_Q:U_x\to \R^m$ with either $v_i\equiv v_j$ or $v_i(y)\neq v_j(y)$ for all $y\in U_x$. We write $\reg(v)$ for the set of \emph{regular points} of $v$, with the \emph{singular set} then $\sing(v):= B_1\setminus \reg(v)$.

An important fact we will need is the following dimension bound on the singular set of a Dirichlet-minimizer.

\begin{theorem}\label{thm:singular-set-bound}
    Suppose $v\in W^{1,2}(B^n_1;\cA_Q(\R^m))$ is Dirichlet-minimizing. Then $\sing(v)$ has Hausdorff dimension $\leq n-2$. Moreover, if $n=2$, $\sing(v)$ consists of isolated points.
\end{theorem}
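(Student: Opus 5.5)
This is Almgren's regularity theorem for $Q$-valued Dirichlet-minimizers. The natural route is the one in \cite{Almgren_2000, DLS11}, and in the paper itself I would simply cite \cite[Theorems 0.11 \& 0.12]{DLS11}. The argument I would follow has four stages.

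\emph{Reduction to collapsed points.} The argument is by induction on $Q$; the case $Q=1$ is classical, since a single-valued minimizer is harmonic and hence analytic. Fix $x\in B_1$ and use the H\"older continuity of $v$.
\begin{itemize}
\item If $v(x)$ is not of the form $Q\llbracket p\rrbracket$, then near $x$ we can write $v=\llbracket v_1\rrbracket+\llbracket v_2\rrbracket$ with $v_1,v_2$ Dirichlet-minimizers taking $Q_1,Q_2<Q$ values and having disjoint images. The inductive hypothesis then applies to each piece.
\item Otherwise $v(x)=Q\llbracket p\rrbracket$. Let $\eta\circ v$ denote the average; it is harmonic. Subtracting it reduces us to an average-free minimizer $u$ with $u(x)=Q\llbracket 0\rrbracket$. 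If $u\equiv Q\llbracket 0\rrbracket$ near $x$, then $x$ is regular.
\end{itemize}
So it suffices to control the set $\Sigma$ of collapsed points at which $u\not\equiv Q\llbracket 0\rrbracket$ nearby.

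\emph{Frequency and tangent functions.} At such points I would use Almgren's frequency
\[
N_x(r)=\frac{r\int_{B_r(x)}|Du|^2}{\int_{\partial B_r(x)}|u|^2}.
\]
The first-variation identities (inner and outer variations) show that $N_x$ is monotone. This gives three things:
\begin{itemize}
\item the limit $N_x(0^+)$ exists and is upper semicontinuous in $x$;
\item blow-ups $u_{x,r}=u(x+r\,\cdot)/(r^{-n+1}\int_{\partial B_r}|u|^2)^{1/2}$ converge strongly in $W^{1,2}_{loc}$, along subsequences, to nonzero, average-free, $N_x(0^+)$-homogeneous minimizers (tangent functions);
\item singular points of $u$ converge to singular points of the limit, up to the usual care with the collapsed stratum. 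Upper semicontinuity of frequency plus compactness handle this.
\end{itemize}

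\emph{Dimension reduction.} This is Federer's argument. If $\mathcal H^{n-2+s}(\sing)>0$ for some $s>0$, I would blow up at density points repeatedly. The outcome is a homogeneous, average-free, nonzero tangent function invariant along an $(n-1)$-dimensional subspace, i.e.\ a one-variable $Q$-valued minimizer $f(x_1)$ that is singular on the hyperplane.

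This is excluded as follows. In one variable, minimizers are locally sums of affine functions whose graphs cannot cross. If two sheets crossed, say $\llbracket x\rrbracket+\llbracket -x\rrbracket$, then truncating to $\pm\max(|x|,\eps)$ near the crossing strictly lowers the energy. A homogeneous average-free one-variable minimizer is therefore either zero or regular, a contradiction. Hence $\dim\sing(v)\le n-2$.

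\emph{The case $n=2$.} I expect the main obstacle to be upgrading ``dimension $0$'' to ``isolated'', since dimension reduction alone only rules out positive $\mathcal H^s$-measure. My first attempt would be a contradiction and compactness argument:
\begin{itemize}
\item suppose $x_k\to x$ are singular points, and blow up at $x$ at scales $|x_k-x|$;
\item the limit is a homogeneous planar minimizer with a singular point on the unit circle;
\item by homogeneity it is then singular along a whole ray, contradicting the dimension bound just proved.
\end{itemize}
The delicate part is ensuring that singular points persist in the limit, since the limit could a priori be regular there. For this I would use upper semicontinuity of frequency at collapsed points, together with the inductive splitting at non-collapsed points, as done in \cite[Theorem 0.12]{DLS11}. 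I would cite that result rather than reproduce it.
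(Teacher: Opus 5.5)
Your bottom line, invoking \cite[Theorems 0.11 \& 0.12]{DLS11}, is exactly the paper's proof. Your outline of the dimension bound is the standard argument: induction on $Q$, reduction to the collapsed set, frequency blow-ups, Federer reduction to a one-variable minimizer, and non-crossing in one variable. One slip there: singular points are \emph{not} preserved under blow-up limits. So the contradiction hypothesis must be $\mathcal{H}^{n-2+s}(\Sigma)>0$ for the collapsed set $\Sigma$, not $\mathcal{H}^{n-2+s}(\sing(v))>0$. Collapse does pass to the limit, because the blow-ups are equi-H\"older and converge locally uniformly. The rest of $\sing(v)$ is then handled by your inductive splitting.

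The sketch for $n=2$ has a genuine gap: the mechanism you name does not work, and it is not what \cite{DLS11} use. Run on $Q$-collapsed points, your compactness argument does show those are isolated, since a homogeneous tangent function collapsed along a ray contradicts the dimension bound. The real difficulty is a sequence of singular points $x_k\to x$, with $x$ a $Q$-collapsed point, where each $x_k$ is collapsed only for a sub-piece of $v$ taking fewer values. After rescaling by $|x_k-x|$, these points converge to a point of $\partial B_1$ lying on a sheet of the tangent function $g$ of multiplicity $\geq 2$. Such sheets do occur: $g=2\llbracket z\rrbracket+2\llbracket -z\rrbracket$, with values in $\C\cong\R^2$, is holomorphic and hence a homogeneous, average-free Dirichlet-minimizer. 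That limit point is regular by definition, since coinciding branches are allowed. Moreover, the rescaled sub-piece minus its own average tends to zero, so its frequency information is lost in the limit. Consequently neither upper semicontinuity of frequency at $x$ nor splitting at non-collapsed points yields a contradiction. What is missing is control of $v$ on \emph{every} small annulus around $x$ simultaneously, rather than along one blow-up sequence. \cite{DLS11} obtain this from a specifically two-dimensional result, uniqueness of tangent functions with a power rate of convergence, which they combine with induction on $Q$. Since you ultimately cite the theorem as the paper does, the proposal stands, but the justification you attach to the citation is incorrect and should be replaced by this one.
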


\begin{proof}
    See \cite[Theorem 0.11 \& Theorem 0.12]{DLS11}.
\end{proof}

The standard tool for analyzing a (non-zero) Dirichlet-minimizer $v:\Omega\to\cA_Q(\R^m)$ in an open domain $\Omega \subset \R^n$ is the \emph{frequency function} $N$. We give a definition following \cite[Section 2.6]{KW23a}. Fix $\phi:[0,\infty)\to \R$ a non-increasing Lipschitz function satisfying $\phi(s)\equiv 1$ for $s\in [0,1/2]$ and $\phi(s)\equiv 0$ for $s\in [1,\infty)$. For each $z\in \Omega$ and $\rho\in (0,\dist(z,\del\Omega))$, define
\[
D_{v,z}(\rho):=\rho^{2-n}\int|Dv|^2\phi(r/\rho) \qquad \text{and} \qquad H_{v,z}(\rho):= -\rho^{1-n}\int|v|^2\frac{1}{r}\phi^\prime(r/\rho),
\]
where $r = r(x):= |x-z|$. The frequency function is then
\[
N_{v,z}(\rho):= \frac{D_{v,z}(\rho)}{H_{v,z}(\rho)}.
\]
We note that letting $\phi$ decrease to the indicator function of the interval $[0,1]$, one recovers the ``usual'' frequency function as originally defined by Almgren. The key property of the frequency function is that $N_{v,z}(\rho)$ is monotone non-decreasing in $\rho$ for each $z$, and furthermore $N_{v,z}(\rho)$ is constant in $\rho$ if and only if $v$ is homogeneous of degree $N\equiv N_{v,z}(\rho)$ centered at $z$.  If $0 \in \Omega$, we will often use the shorthand $N_v(\rho) \equiv N_{v, 0}(\rho)$.

Multi-valued Dirichlet-minimizing functions are important because of Almgren's (strong) Lipschitz approximation theorem for area-minimizing currents.  Here we write $\mathbf{v}(u)$ for the natural current associated to the graph of a Lipschitz multi-valued function $u: B^n_r\to \cA_Q(\R^m)$.

\begin{theorem}\label{thm:Lipschitz-approx}
    Fix $Q\in \{1,2,\dotsc\}$, $P = \R^n \times \{0^m\}$, and let $T$ be an area-minimizing integral $n$-current in $C_{1}(0) \subset \R^{n+m}$ satisfying
    \begin{gather*}
    (\del T)\res C_1 = 0, \quad \sup_{x\in\spt\|T\|}d_{P}(x)<\infty, \quad \pi_{P\#}T = Q\llbracket B_1^n\rrbracket, \\ \cE :=\cE_{V,P}(0,1)<\eps^2.
    \end{gather*}
    Then provided $\eps(n, m, Q) > 0$ is sufficiently small, there exists a Lipschitz function $u:B^n_{1/2}\to \cA_Q(\R^m)$ and a closed set $K\subseteq B^n_{1/2}$ so that
    \begin{enumerate}
        \item $T \res (K\times \R^m) = \mathbf{v}(u) \res (K\times \R^m)$;
        \item $\Lip(u) \leq c\cE^{\gamma}$;
        \item $\cH^n(B^n_{1/2}\setminus K) + \|T\|((B^n_{1/2}\setminus K)\times\R^m) \leq c\cE^{1+\gamma}$.
    \end{enumerate}
    Here $\gamma = \gamma(n,m,Q)$, $c = c(n, m, Q)$ are positive constants.
\end{theorem}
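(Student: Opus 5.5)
This is Almgren's strong Lipschitz approximation, which the paper quotes from the literature (De Lellis--Spadaro). I would not reprove it in full; the plan below is the standard proof structure that justifies invoking it.

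\textbf{Step 1: weak approximation.} Define the maximal function of the excess measure
\[
\mathbf{m}(x):=\sup_{0<s<1/4} s^{-n}\int_{C_s(x)}|\vec T-\vec P|^2\,d\|T\|,
\]
and set $K:=\{x\in B^n_{1/2}:\mathbf{m}(x)\le \cE^{2\gamma}\}$.

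\begin{itemize}
\item For $x\in K$, every slice of $T$ over $x$ consists of $Q$ points counted with multiplicity. This follows from $\pi_{P\#}T=Q\llbracket B^n_1\rrbracket$ and the bound on the mass excess, together with Lemma \ref{lem:zero-mass}-type arguments ruling out extra sheets of cancelling orientation.
\item The slicing theory of currents (BV estimates on slices) gives $\Lip(u|_K)\le c\cE^{\gamma}$ for the map $u$ recording these slices.
\item Extend $u$ to all of $B^n_{1/2}$ by the Lipschitz extension theorem for $\cA_Q$-valued maps.
\item The weak-type $(1,1)$ estimate for the maximal function bounds $\cH^n(B^n_{1/2}\setminus K)$ and $\|T\|((B^n_{1/2}\setminus K)\times\R^m)$ by $c\cE^{1-2\gamma}$.
\end{itemize}

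\textbf{Step 2: minimality and a harmonic comparison.}

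\begin{itemize}
\item By Taylor expansion of area, $\mathbf{v}(u)$ has mass $Q\omega_n/2^n+\frac12\int|Du|^2+O(\cE^{1+\gamma})$.
\item Compare with a Dirichlet-minimizer $w$ having the same boundary data on a good radius chosen by Fubini.
\item If the mass of $T$ off $K$ were much larger than $\cE^{1+\gamma}$, the graph of a Lipschitz-truncated $w$, glued in via an isoperimetric interpolation, would be a competitor with strictly less mass. This would contradict minimality.
\end{itemize}

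\textbf{Step 3: the higher-integrability upgrade (the main obstacle).}

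\begin{itemize}
\item Prove a reverse H\"older / Gehring-type estimate: $\int_{\{\mathbf{m}>\cE^{2\gamma}\}}\mathbf{m}\,dx \le c\,\cE^{1+\gamma}$.
\item Apply it scale by scale. This converts the weak-type bound from Step 1 into the improved bound $c\cE^{1+\gamma}$ for both $\cH^n(B^n_{1/2}\setminus K)$ and the mass over the bad set.
\item The key difficulty is establishing the higher integrability of $|Du|^2$ itself. I would follow De Lellis--Spadaro: first prove higher integrability for Dirichlet-minimizers, via a Caccioppoli inequality plus Gehring's lemma.
\item Then transfer it to $T$ through the harmonic approximation of Step 2, using a covering argument on the superlevel sets of $\mathbf{m}$.
\end{itemize}

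In practice, in the paper one simply cites \cite{DLS11, DeLellis_Spadaro_2014}. The hypothesis $\sup d_P<\infty$ only ensures that $\pi_{P\#}T$ is well-defined.
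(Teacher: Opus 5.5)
Your proposal matches the paper, which likewise gives no argument and simply cites \cite[Theorem 2.8]{KW23a}, a restatement of the standard strong Lipschitz approximation of Almgren and De Lellis--Spadaro, so invoking the literature is the whole proof. Two inaccuracies in your sketch of the cited proof: the competitor comparison in Step 2 only gives an $o(\cE)$ bound on the bad set, with the power gain coming entirely from the higher integrability in Step 3; and your displayed estimate $\int_{\{\mathbf{m}>\cE^{2\gamma}\}}\mathbf{m}\,dx\le c\,\cE^{1+\gamma}$ yields only $\cH^n(\{\mathbf{m}>\cE^{2\gamma}\})\le c\,\cE^{1-\gamma}$ by Chebyshev, so you need a gain exceeding $2\gamma$, obtained by taking $\gamma$ small relative to the higher-integrability exponent.
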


\begin{proof}
    See e.g.~\cite[Theorem 2.8]{KW23a}.
\end{proof}

\subsection{Uniqueness of tangent cones}

A crucial tool for us is that at large scales a $2$-dimensional area-minimizing current $T$ with quadratic area growth looks close to a \emph{fixed} tangent cone (which is necessarily a union of transverse planes with multiplicity), and moreover $T$ decays to this tangent cone at infinity at a controlled rate. This is the content of the following.

\begin{theorem}[Uniqueness of tangent cones at infinity, \cite{Riviere_2004}]\label{thm:unique}
Let $T$ be an area-minimizing integral $2$-current in $\R^{2+m}$ with $\del T = 0$ and $\Theta := \Theta_T(\infty) < \infty$.  Then there are transverse $2$-planes $\{ P_1, \ldots, P_\ell \}$ and multiplicities $n_1, \ldots, n_\ell \in \N$ so that $\bC = n_1 \llbracket P_1 \rrbracket + \cdots + n_\ell \llbracket P_\ell \rrbracket$ is the unique tangent cone of $T$ at infinity.  In other words, $(1/R)_\# T \to \bC$ as currents and varifolds, and $(1/R) \spt\|T\| \to \spt \|\bC\|\equiv \cup_i P_i$ locally in the Hausdorff distance, as $R\to\infty$.

Moreover, there is an $\alpha = \alpha(m, \Theta) > 0$ so that for all $R \gg 1$ we have the scale-invariant decay:
\begin{gather}
R^{-1} d_H( \spt\|T\| \cap B_R, \spt\|\bC\| \cap B_R) \leq R^{-\alpha}, \label{eqn:unique-concl1} \\
R^{-n-2} \int_{B_R} \mathrm{dist}(x, \bC)^2\, d\|V\|(x) \leq R^{-\alpha}. \label{eqn:unique-concl2}
\end{gather}
\end{theorem}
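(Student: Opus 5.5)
This is Rivière's theorem, so the natural route is to follow his argument, recast at infinity via scaling. We work with the blow-downs $T_R := (1/R)_\# T$. By monotonicity and $\Theta<\infty$, the $T_R$ have uniformly bounded mass on compact sets. Federer--Fleming compactness for area-minimizers then gives subsequential limits $\bC$, which are area-minimizing and have density ratio $\Theta_{\bC}(0,\rho)\equiv\Theta$ for all $\rho$. By the monotonicity identity, such $\bC$ are dilation-invariant. Morgan's classification, or the fact recorded above that conical area-minimizing $2$-currents are supported on transverse planes, shows $\bC=\sum_i n_i\llbracket P_i\rrbracket$ with $\sum n_i=\Theta$.

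What remains is uniqueness together with the decay estimates \eqref{eqn:unique-concl1}, \eqref{eqn:unique-concl2}. Both follow from a single decay lemma, and the plan is to prove that lemma first. For a minimizer $S$ in $B_1$ that is $L^2$-close to such a cone $\bC$, with density pinched near $\Theta$, we want to show that at scale $\theta$ (or, for blow-downs, at scale $\theta^{-1}$) there is another such cone $\bC'$ with
\[
\theta^{-4}\int_{B_\theta}\dist(x,\bC')^2\,d\|S\|\leq \tfrac12\int_{B_1}\dist(x,\bC)^2\,d\|S\|, \qquad d(\bC,\bC')\lesssim \Big(\int_{B_1}\dist^2(x,\bC)\,d\|S\|\Big)^{1/2}.
\]
A more convenient equivalent form measures the gap $\Theta_S(0,1)-\Theta_S(0,\theta)$ as the quantity that improves. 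We then iterate from scale $R_0$ outward. The density deficit $\Theta-\Theta_T(0,R)\to0$ monotonically, and we aim for a Łojasiewicz-type inequality
\[
\frac{d}{d\log R}\big(\Theta-\Theta_T(0,R)\big)\gtrsim \big(\Theta-\Theta_T(0,R)\big)^{1-\beta}.
\]
Combined with the monotonicity formula, in which the radial derivative of the density controls $\int|x^\perp|^2/|x|^4$, this gives geometric decay of the deficit. Summing the cone displacements $d(\bC_k,\bC_{k+1})$ then produces a Cauchy sequence, hence a unique limit and a power rate $R^{-\alpha}$. The Hausdorff estimate \eqref{eqn:unique-concl1} follows from the $L^2$ bound via Lemma \ref{lem:Linfty-by-L2} applied near each plane. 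Near the origin of each blow-down, the planes separate and we use monotonicity lower bounds.

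The decay lemma is the main obstacle. Near a plane of multiplicity $n_i>1$ there is no Allard theorem and the current may branch, so a naive linearization fails. Following Rivière, we would use the $2$-dimensional, almost-holomorphic structure instead of a linear decay argument. Morgan's theorem gives an orthogonal complex structure $J$ for which $\bC$ is holomorphic, and for a minimizer close to $\bC$ the Kähler form $\omega_J$ is a calibration up to an error. Precisely, the mass excess $\int(1-\langle\omega_J,\vec T\rangle)\,d\|T\|$ on $B_R$ can be written as a boundary term on $\partial B_R$, since $\omega_J=d\lambda$ with $\lambda$ homogeneous of degree $2$. Comparing $T\res B_R$ with the cone over its slice $\langle T,|x|,R\rangle$, which is admissible by minimality, together with an optimal choice of $J$ near the current one (the excess in $J$ is quadratic), yields the differential inequality above with $\beta=1/2$. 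The key sub-steps are:
\begin{enumerate}
\item Show that the slice on $\partial B_R$ is close to a union of great circles with multiplicity, by averaging the monotonicity formula in $R$.
\item Show that the cone-comparison defect is bounded by the $L^2$ distance of the slice to a $J'$-holomorphic configuration.
\item Bound that $L^2$ distance by the radial derivative term from monotonicity, using a $1$-dimensional Wirtinger/Poincaré inequality on circles, where transversality of the $P_i$ keeps the constants uniform.
\end{enumerate}
Step 2 is where I expect the real difficulty. Higher-multiplicity planes are handled because the estimate depends only on the total slice and never needs a graph decomposition. The final exponent is $\alpha=\alpha(m,\Theta)$.
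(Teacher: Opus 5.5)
The soft parts of your plan agree with the paper: compactness plus Morgan for the form of $\bC$, a White-type dyadic summation for uniqueness, and Lemma \ref{lem:Linfty-by-L2} for the Hausdorff bound. However, the two steps that carry the content are not in place.

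\textbf{The decay input is misstated.} The quantity $\frac{d}{d\log R}(\Theta-\Theta_T(0,R))$ is $\le 0$, so your inequality has the wrong sign. Even with the sign fixed, the exponent $1-\beta=1/2$ is impossible. Suppose $f(t)=\Theta-\Theta_T(0,e^t)$ obeyed $f'\le -cf^{1/2}$. Then $(f^{1/2})'\le -c/2$ wherever $f>0$, so $f$ would vanish at a finite radius. By the monotonicity formula, $x^\perp=0$ on $\spt\|T\|$ outside a compact set, which already fails for $\{zw=1\}$. An exponent $1/2$ in a {\L}ojasiewicz inequality refers to $|\nabla E|\gtrsim E^{1/2}$. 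Since the monotonicity derivative plays the role of $|\nabla E|^2$, what is needed is the \emph{linear} inequality $R\frac{d}{dR}\Theta_T(0,R)\ge 2\eps(\Theta-\Theta_T(0,R))$, i.e.\ $\Theta-\Theta_T(0,R)\le (R_0/R)^{2\eps}(\Theta-\Theta_T(0,R_0))$. This is exactly what the paper imports from \cite{Riviere_2004} as a black box. Your sketch does not produce it. Comparing $T\res B_R$ with the cone over its slice only recovers plain monotonicity, and all of the quantitative gain is deferred to your Step 2, which you leave open. You should either cite Rivi\`ere's inequality in this linear form or actually prove it.

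\textbf{You never explain how to reach \eqref{eqn:unique-concl2}.} This remains a gap even granting power decay of the density deficit. The $L^2$ excess-decay lemma you state first is exactly what is unavailable near planes of multiplicity $>1$, as you acknowledge. You then switch to the density-gap form and call it equivalent, but that equivalence is not justified. Uniqueness does follow from deficit decay by White's argument: monotonicity bounds $\int_{A_{2R,R}}|x^\perp|^2|x|^{-4}\,d\|T\|$ by the deficit, hence the flat distance between the normalized slices at $R$ and $2R$ by $cR^{-\eps}$, which is summable. That part of your plan is fine. A flat-norm rate, however, does not by itself give a rate for $R^{-4}\int_{B_R}\dist(x,\bC)^2\,d\|T\|$.

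The paper supplies this conversion with Lemma \ref{lem:L2-by-mono}. It tests stationarity with $X=\zeta(|x|/\rho)f^2x$, where $f$ is a smooth $1$-homogeneous function agreeing with the signed distance to the now-fixed cone $\bC$ near $\bC$. This controls $\frac{d}{d\rho}\big(\rho^{-4}\int\zeta f^2\,d\|T\|\big)$ purely by $x^\perp$-terms. Writing $e(R)=R^{-4}\int_{A_{R,R/2}}\dist(x,\bC)^2\,d\|T\|$, it gives $|e(R)-e(S)|\le c(\log(R/S)+1)(R_0/S)^{\eps}$. One then telescopes dyadically down from $R=\infty$, using $e(R)\to 0$ from the qualitative convergence, and sums over annuli. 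Only after this does Lemma \ref{lem:Linfty-by-L2} give \eqref{eqn:unique-concl1}.
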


\begin{proof}
This follows primarily from the reverse epiperimetric inequality shown in \cite{Riviere_2004}.  Specifically, \cite{Riviere_2004} implies there is an $\eps = \eps(m, \Theta) \in (0, 1)$ and $R_0 = R_0(T) \gg 1$ so that
\begin{equation}\label{eqn:unique-1}
\Theta - \Theta_T(0, R) \leq (R_0/R)^{2\eps}(\Theta - \Theta_T(0, R_0)) \quad \forall R \geq R_0.
\end{equation}
Arguing as in \cite{White_1983}, \eqref{eqn:unique-1} implies that given any sequence $R_i \to \infty$, the currents $\del (((1/R_i)_\# T) \llcorner B_1)$ form a Cauchy sequence in the flat norm, which implies $(1/R)_\# T \to \bC$ as currents (for some area-minimizing cone $\bC$), and hence as varifolds by the minimizing property (cf.~Lemma \ref{lem:mass-by-L2} and Theorem \ref{thm:Lipschitz-approx}) and in support by the monotonicity formula.

To see the convergence rates \eqref{eqn:unique-concl1} and \eqref{eqn:unique-concl2}, we can use Lemma \ref{lem:L2-by-mono}. For any $\delta>0$, by enlarging $R_0$ one can assume that $d_H( \spt\|T\| \cap B_R, \spt\|\bC\| \cap B_R) \leq \delta R$ for all $R \geq R_0$. By picking $\delta = \delta(\bC)$ suitably small, we can find a smooth, $1$-homogeneous function $f$ on $\R^{2+m} \setminus \{0\}$ which agrees with the signed distance to $\bC$ in the region
\[
\{ x \in \R^{2+m} \setminus \{0\} : \dist(x, \bC) \leq 2\delta |x| \}.
\]
Applying Lemma \ref{lem:L2-by-mono} to $f$ with the decay \eqref{eqn:unique-1}, we get
\begin{equation}\label{eqn:unique-2}
|e(R) - e(S)| \leq c(m, \Theta) (\log(R/S) + 1) (R_0/S)^{\eps} \quad \forall R_0 < S < R,
\end{equation}
where we write $e(R) := R^{-n-2} \int_{A_{R, R/2}} \dist(x, \bC)^2\, d\|V\|$. Taking $R_i = R_0 2^i$ we deduce from \eqref{eqn:unique-2} that
\[
e(R_i) = e(R_i) - \lim_{R \to \infty} e(R) \leq \sum_{j \geq i} |e(R_{j+1}) - e(R_j)| \leq c(m, \Theta) R_i^{-\eps},
\]
and hence
\begin{align}
\int_{B_{R_i}} \dist(x, \bC)^2 d\|V\| &\leq \int_{B_{R_0}} \dist(x, \bC)^2 d\|V\| + \sum_{j=1}^i e(R_j)R_j^{n+2} \nonumber \\
&\leq \int_{B_{R_0}} \dist(x, \bC)^2 d\|V\| + c R_i^{n+2-\eps} \nonumber \\
&\leq c(m, \Theta) R_i^{n+2-\eps} \label{eqn:unique-3}
\end{align}
provided $i \gg 1$.  Combing the above with \eqref{eqn:unique-2} and ensuring $R \gg 1$ we get \eqref{eqn:unique-concl1} with $\alpha = \eps/2$.  The decay estimate in \eqref{eqn:unique-concl1} then follows from \eqref{eqn:unique-concl2} and Lemma \ref{lem:Linfty-by-L2}.
\end{proof}

\section{Planar frequency for area-minimizing currents}\label{sec:freq}

In this section we give a generalized version of Krummel--Wickramasekera's planar frequency monotonicity formula for area-minimizing $n$-currents in $\R^{n+m}$, as originally introduced in \cite{KW23a}. For this, we fix $\phi:[0,\infty)\to \R$ the Lipschitz function with $\phi(s)\equiv 1$ for $s\in [0,1/2]$, $\phi(s) = 2-2s$ for $s\in [1/2,1]$, and $\phi(s)\equiv 0$ for $s\in [1,\infty)$. We also write $r(x):= |\pi_P(x)|$ for the cylindrical radius with respect to the plane $P := \R^n\times\{0^m\}$. Then define the quantities
\[
H_{T,P}(\rho):= -\rho^{1-n}\int d^2_P|\nabla r|^2\frac{1}{r}\phi^\prime(r/\rho)\, d\|T\|,
\]
\[
D_{T,P}(\rho) := \frac{1}{2}\rho^{2-n}\int\|\pi_T-\pi_P\|^2\phi(r/\rho)\, d\|T\|,
\]
and, when $H_{T,P}(\rho)>0$,
\[
N_{T,P}(\rho):= \frac{D_{T,P}(\rho)}{H_{T,P}(\rho)}.
\]
The function $N_{T,P}$ is an intrinsic notion of frequency for the current $T$ relative to the plane $P$ (at scale $\rho$), which effectively reduces to the `standard' frequency $N_v$ if $T$ is represented by the multi-graph of a `small' function $v$ over $P$.

The main theorem of this section is the following. The proof is a refinement of the calculations in the proof of the planar frequency monotonicity given in \cite{KW23a}, which we defer to Appendix \ref{sec:freq-mono-appendix}.

\begin{theorem}\label{thm:freq-mono}
    Let $\theta\geq 1$ and $P = \R^n\times\{0^m\}$. Then, there exists $\delta = \delta(n,m,\theta)\in (0,1)$ such that if $T$ is an area-minimizing integral $n$-current in $C_{2\rho}(0)$ obeying
    \[
    (\del T)\res C_{2\rho}(0) = 0,\qquad \Theta_T(0,\rho/2)\geq 1/2, \qquad \Theta_{T}(0,2\rho)\leq\theta,
    \]
    \[
    E_{T,P}(0,2\rho)\leq E \leq \delta^2,
    \]
    and such that $\spt \|T\|$ does not coincide with $P$ in $C_{2\rho}(0)$, then
    \begin{equation}\label{eqn:freq-mono-concl1}
\frac{d}{d\rho} \log N_{T, P}(\rho) \geq -c \rho^{-1} E^{\gamma} - c' E^{\gamma-\gamma_1} \frac{d}{d\rho}\left[ (\rho^{-2} D_{T, P}(\rho))^{\gamma_1} \right],
\end{equation}
and
\begin{equation}\label{eqn:freq-mono-concl2}
\left| \frac{d}{d\rho} \log H_{T, P}(\rho) - 2 \rho^{-1} N_{T, P}(\rho) \right| \leq c \rho^{-1} E^{\gamma}.
\end{equation}
Here, $0<c^\prime<c$ and $0<\gamma_1<\gamma$ are all constants depending only on $m,n,\theta$.
\end{theorem}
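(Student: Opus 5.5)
The plan is to follow the Krummel--Wickramasekera computation of $D'$ and $H'$ from first variations, keeping track of every error term in a scale-invariant way. By rescaling I may take $\rho$ as the running variable on $[\rho_0,2\rho_0]$, with hypotheses at scale $\rho_0$. Two first-variation identities come first.

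For the domain variation I use the stationarity of $|T|$ with the radial vector field $X = \phi(r/\rho)\pi_P(x)$. This expresses $\rho\, D'(\rho)$ as $(n-2)D(\rho)$ plus twice the integral of $|\nabla^\perp_{\partial_r}\pi_P^\perp x|^2$-type quantities against $-r\phi'(r/\rho)/\rho$. The errors are cubic in the tilt, of order $\rho^{2-n}\int \|\pi_T-\pi_P\|^3$ and similar.

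For the squeeze variation I use $X = \phi(r/\rho)\pi_P^\perp(x)$. This gives $D(\rho)$ equal to the integral of $-\rho^{-1}\phi'\, \langle \pi_P^\perp x, \nabla r\rangle$-type terms, again with cubic errors. Differentiating $H$ directly and using the coarea-type identity for $\int d_P^2|\nabla r|^2 r^{-1}\phi'$ gives $\rho H' = (2-?)H + 2\rho\cdot(\text{the cross term})$. Combined with the squeeze identity, this cross term is $\rho^{-1}\cdot 2D$ plus errors, which yields \eqref{eqn:freq-mono-concl2} once the errors are bounded by $E^\gamma H$.

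With the two identities in hand, $\log N$ is handled by Cauchy--Schwarz exactly as for Dirichlet minimizers. The principal part of $(\log D)' - (\log H)'$ is $\ge 0$ by Cauchy--Schwarz on the weighted integrals of $|\partial_r u|^2$ and $|u|^2$, where $u=\pi_P^\perp x$ along the surface. What remains is to bound the error terms relative to $D$ and $H$.

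The main obstacle is that the errors must be controlled relative to $D$ and $H$, not merely by $E$. Since $T$ is not assumed to be a graph, I use Theorem~\ref{thm:Lipschitz-approx} together with Lemma~\ref{lem:mass-by-L2} and Lemma~\ref{lem:tilt-by-L2}. On the good set $K$, where $T$ is a Lipschitz graph with $\mathrm{Lip}\le cE^\gamma$, the cubic errors are at most $cE^{\gamma}D$. On the bad set, the mass is $\le c\mathcal{E}^{1+\gamma}$, where $\mathcal{E}$ is the mass excess at scale $\rho$, comparable to $\rho^{-2}D$ up to constants. That contribution can only be bounded by $D\cdot(\rho^{-2}D)^{\gamma}$ rather than $E^\gamma D$, because $D$ may be much smaller than $E$.

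To cope with this I would use the inequality $(\rho^{-2}D)^{\gamma}\le E^{\gamma-\gamma_1}(\rho^{-2}D)^{\gamma_1}$. I would then absorb the term involving $\rho\frac{d}{d\rho}$ of the mass excess, which arises when differentiating the cutoff, into the derivative expression $-c' E^{\gamma-\gamma_1}\frac{d}{d\rho}[(\rho^{-2}D)^{\gamma_1}]$. This explains the shape of \eqref{eqn:freq-mono-concl1}.

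I also need $H$ to be comparable to the $L^2$ height on the annulus. This lower bound should follow from the nondegeneracy hypotheses $\Theta_T(0,\rho/2)\ge1/2$ and $\spt\|T\|\neq P$, via a Poincaré inequality on annuli ($D \gtrsim$ interior height controlled by $H$ plus $D$). With that comparability, dividing the error bounds by $H$ gives the stated forms.
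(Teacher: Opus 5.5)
Your principal computation is the Krummel--Wickramasekera framework the paper also uses: the squeeze identity $D=-\rho^{1-n}\int\phi'\langle\nabla^T r,\pi_{P^\perp}x\rangle$, the domain variation for $D'$, and Cauchy--Schwarz. The genuinely new content of this theorem is controlling the errors relative to $D$ and $H$ for arbitrary small $E$, and there your argument has a gap.

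\textbf{The single-scale approximation does not work.} Theorem \ref{thm:Lipschitz-approx} needs excess on a larger cylinder than where it produces a graph. Its bad set therefore contributes $\lesssim \mathcal{E}_{T,P}(0,2\rho)^{1+\gamma}\rho^n\lesssim E^{1+\gamma}\rho^n$. Nothing bounds $\mathcal{E}_{T,P}(0,2\rho)$ by $\rho^{-2}D_{T,P}(\rho)$. The weight $\phi(r/\rho)$ degenerates at $\partial C_\rho$, and the tilt can sit almost entirely outside $C_\rho$ (think of a high-frequency graph). A reverse inequality would be a doubling estimate, which is what the frequency is supposed to produce.

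\textbf{Your diagnosis of the obstacle is inconsistent.} If the bad set really contributed $\lesssim D(\rho^{-2}D)^\gamma$, then since $\rho^{-2}D\le cE$ (Lemma \ref{lem:tilt-by-L2}) this is already $\le cE^\gamma D$. No derivative term would ever appear, so the inequality $(\rho^{-2}D)^\gamma\le E^{\gamma-\gamma_1}(\rho^{-2}D)^{\gamma_1}$ does not touch the real problem.

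\textbf{The same gap affects $H$.} The bound $-\rho^{1-n}\int d_P^2\|\pi_T-\pi_P\|^2 r^{-1}\phi'\le cE^\gamma H$ gives both \eqref{eqn:freq-mono-concl2} and the comparability of $H$ with the annular $L^2$ height. It does not follow from a Poincar\'e inequality. The hypothesis $\Theta_T(0,\rho/2)\ge 1/2$ is only used to exclude $q=0$ in $\pi_{P\#}T=q\llbracket B^n_{2\rho}\rrbracket$.

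\textbf{What is missing is the multi-scale argument of Lemmas \ref{lem:H-error} and \ref{lem:D-error}.}
\begin{enumerate}
\item Define stopping scales $\bar\sigma(\xi)$ as the largest $\sigma$ at which the local excess is $\ge\chi^2(\sigma/\rho)^{2\eps}E$. Points with $\bar\sigma=0$ have $\vec T=\vec P$ (resp.\ lie in $P$).
\item Take a Vitali cover and apply Lipschitz approximation in each $C_{2\sigma_i}(\xi_i)$. The error there is then $\le c\sigma_i^n(\sigma_i/\rho)^{2\eps(1+\gamma)}E^{1+\gamma}$.
\item The non-concentration Lemmas \ref{lem:H-helper} and \ref{lem:D-helper} rest on unique continuation for Dirichlet minimizers (Lemma \ref{lem:dir-minz-recenter}). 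They move the local excess into a sub-ball $C_{\sigma_i/10}(y_i)\subset C_\rho$. This bounds the error by $(\sigma_i/\rho)^{2\eps\gamma}E^\gamma\int_{C_{2\sigma_i/5}(\xi_i)\cap C_\rho}\|\pi_T-\pi_P\|^2$.
\item The same step gives $\rho^{-2}D\ge c^{-1}(\sigma_i/\rho)^{n+1+2\eps}E$. This converts $(\sigma_i/\rho)^{2\eps\gamma}E^\gamma$ into $(\rho^{-2}D)^{\gamma_1}E^{\gamma-\gamma_1}$ with $\gamma_1=2\eps\gamma/(n+1+2\eps)$.
\item The derivative term comes from the unweighted tilt: $\frac12\int_{C_\rho}\|\pi_T-\pi_P\|^2\le (n-1)\rho^{n-2}D+\rho^{n-1}D'$, since $\phi(s)-s\phi'(s)\ge1$ on $[0,1)$. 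This is then rewritten as $D\frac{d}{d\rho}[(\rho^{-2}D)^{\gamma_1}]$ plus $cE^\gamma\rho^{-1}D$.
\end{enumerate}

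\textbf{A smaller point.} The $D'$ identity does not follow from stationarity of $|T|$ with $X=\phi(r/\rho)\pi_P(x)$ alone. That only yields a mass identity. You must subtract $T(d\omega)=0$ using $\partial T=0$ and $\pi_{P\#}T=q\llbracket B^n\rrbracket$, and the resulting error is quartic in $|\vec T-\vec P|$.
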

The primary corollary of Theorem \ref{thm:freq-mono} for us is almost-monotonicity of $N_{T,P}$ at large scales when $T$ decay polynomially to $P$ at infinity (almost-monotonicity of $N_{T,P}$ at small scales given polynomial decay to $P$ at a point was the original theorem of \cite{KW23a}). We remark that the differential inequality is more general, and would also imply an almost-monotonicity of $N_{T,P}$ if $E_{T,P}(\rho)$ were to decay logarithmically in $\rho$.

\begin{cor}\label{cor:freq-mono}
    Let $T$ be an area-minimizing integral $n$-current in $B_R\subset\R^{n+m}$ whose support does not coincide with a plane. Suppose that
    \[
    (\del T)\res B_R = 0,\qquad \Theta_T(0,1/2)\geq 1/2,\qquad \Theta_T(0,R)\leq \theta,
    \]
    and, for some fixed $\eta,\alpha>0$,
    \[
    \rho^{-n-2}\int_{B_\rho}d_P^2\, d\|T\| \leq \eta^2\rho^{-2\alpha} \qquad \text{for all }1\leq\rho\leq R.
    \]
    Then, provided $\eta = \eta(n,m,\theta)\in (0,1)$ is sufficiently small, there are constants $c^\prime<c$, $\gamma_1<\gamma$ depending only on $n,m,\theta,\alpha$ such that the function
    \begin{gather}\label{eqn:cor1-freq-mono-concl1}
    \rho\mapsto N_{T,P}(\rho) \exp(-c\eta^{2\gamma}\rho^{-2\alpha\gamma}+c^\prime\eta^{2\gamma-2\gamma_1}(\rho^{-2}D_{T,P}(\rho))^{\gamma_1})
    \end{gather}
    is non-decreasing for $1\leq\rho\leq R/4$.
\end{cor}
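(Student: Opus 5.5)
We will deduce Corollary \ref{cor:freq-mono} by applying Theorem \ref{thm:freq-mono} at each scale $\rho\in[1,R/4]$ with the scale-dependent choice $E = E(\rho) := c_0\eta^2\rho^{-2\alpha}$, and then integrating the differential inequality \eqref{eqn:freq-mono-concl1}.

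\emph{Checking the hypotheses of Theorem \ref{thm:freq-mono} at scale $\rho$.} We need them in $C_{2\rho}(0)$, whereas the corollary only controls $T$ in balls.

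First, by Lemma \ref{lem:Linfty-by-L2} (applied in $B_{4\rho}$ after rescaling), the decay hypothesis at scale $4\rho\leq R$ gives
\[
\spt\|T\|\cap B_{3\rho}\subset\{d_P\leq c\eta\rho^{1-\alpha}\}.
\]
Since $\rho\ge1$, the right-hand side is at most $c\eta\rho$. For $\eta$ small, monotonicity and this slab containment show that $T\res C_{2\rho}$ agrees with $T\res B_{4\rho}$ near the cylinder $C_{2\rho}$. We then regard $T$ as area-minimizing in $C_{2\rho}\cap B_{4\rho}$ with support in the slab. This point needs care: Theorem \ref{thm:freq-mono} is stated in a full cylinder. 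The cleanest fix is to note that its proof only uses the region where $\phi(r/\rho)\neq0$, together with minimality in a neighbourhood of $\spt\|T\|\cap C_{2\rho}$, and that region lies inside $B_{4\rho}$.

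With this in hand, the excess bound is
\[
E_{T,P}(0,2\rho)\leq (2\rho)^{-n-2}\int_{B_{3\rho}}d_P^2\,d\|T\|\leq c\eta^2\rho^{-2\alpha}=:E(\rho)\leq\delta^2
\]
once $\eta$ is small. The remaining hypotheses follow from monotonicity:
\[
\Theta_T(0,\rho/2)\geq\Theta_T(0,1/2)\geq 1/2,\qquad \Theta_T(0,2\rho)\leq\Theta_T(0,R)\leq\theta.
\]
Since $T$ is not planar, $\spt\|T\|$ does not coincide with $P$. We also need $H_{T,P}(\rho)>0$. This follows because \eqref{eqn:freq-mono-concl2} keeps $\log H$ finite, and $H$ vanishes only if $T$ lies in $P$ on an annulus, which by unique continuation would make $T$ planar.

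\emph{Integrating.} At each fixed $\rho$, \eqref{eqn:freq-mono-concl1} with $E=E(\rho)$ reads
\[
\frac{d}{d\rho}\log N\geq -c\rho^{-1}\eta^{2\gamma}\rho^{-2\alpha\gamma}-c'\eta^{2\gamma-2\gamma_1}\rho^{-2\alpha(\gamma-\gamma_1)}\frac{d}{d\rho}\big[(\rho^{-2}D)^{\gamma_1}\big].
\]
The first term is exactly
\[
\frac{d}{d\rho}\big[(c/(2\alpha\gamma))\,\eta^{2\gamma}\rho^{-2\alpha\gamma}\big],
\]
so after absorbing $1/(2\alpha\gamma)$ into $c$ it is an exact derivative.

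The second term is the main obstacle, because its coefficient depends on $\rho$. This prevents it from being a total derivative unless the factor $\rho^{-2\alpha(\gamma-\gamma_1)}$ is handled. Two routes are available:
\begin{itemize}
\item Use that the $E$ in Theorem \ref{thm:freq-mono} is only an upper bound. We may take $E$ equal to the constant $E(1)=c\eta^2$ in the second term, which is legitimate if the theorem's derivation allows separate bounds on the two error terms. The cost is that the first term then loses its decay.
\item Write $f=(\rho^{-2}D)^{\gamma_1}$ and integrate by parts, $\int \rho^{-\beta}f'=[\rho^{-\beta}f]+\beta\int\rho^{-\beta-1}f$. Then bound $f\le (c\eta^2\rho^{-2\alpha})^{\gamma_1}$ using Lemma \ref{lem:tilt-by-L2}, since $\rho^{-2}D\lesssim$ tilt excess $\lesssim$ $L^2$ excess $\leq c\eta^2\rho^{-2\alpha}$. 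The leftover $\int\rho^{-\beta-1}f$ is then integrable with a bound of the same form as the first term.
\end{itemize}
I would pursue the second route, readjusting $c,c'$ and renaming $\gamma_1<\gamma$ so that all terms combine into the exponent in \eqref{eqn:cor1-freq-mono-concl1}. Then $\frac{d}{d\rho}$ of the logarithm of \eqref{eqn:cor1-freq-mono-concl1} is nonnegative on $[1,R/4]$, which gives the stated monotonicity. If the integration by parts leaves a residual that does not fit this form, I would fall back to allowing $E$ in Theorem \ref{thm:freq-mono} to be any admissible bound and choose it constant in the $D$-term.
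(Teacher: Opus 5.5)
Your route is the paper's: bound $E_{T,P}(0,2\rho)\le c\eta^2\rho^{-2\alpha}$ via Lemma \ref{lem:Linfty-by-L2}, apply \eqref{eqn:freq-mono-concl1} at each scale, and integrate. The paper does this in two lines and writes the coefficient of the $D$-term as $c'\eta^{2\gamma-2\gamma_1}$. Plugging in $E(\rho)=c_0\eta^2\rho^{-2\alpha}$ actually gives $c'\eta^{2\gamma-2\gamma_1}\rho^{-\beta}$ with $\beta=2\alpha(\gamma-\gamma_1)$. Dropping $\rho^{-\beta}\le 1$ is legitimate only where $f'\ge 0$, with $f:=(\rho^{-2}D_{T,P})^{\gamma_1}$. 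So you have found a real issue that the paper passes over, and your care with balls versus cylinders is also fine. The gap is in how you close it. Your integration by parts is correct, and the leftover $c'\eta^{2\gamma-2\gamma_1}\beta\rho^{-\beta-1}f$ is indeed $\le C\eta^{2\gamma}\rho^{-2\alpha\gamma-1}$ by Lemma \ref{lem:tilt-by-L2}. But the boundary term keeps its weight, so what you have actually proved is that
\[
\rho\mapsto N_{T,P}(\rho)\exp\big(-c\eta^{2\gamma}\rho^{-2\alpha\gamma}+c'\eta^{2\gamma-2\gamma_1}\rho^{-2\alpha(\gamma-\gamma_1)}(\rho^{-2}D_{T,P}(\rho))^{\gamma_1}\big)
\]
is non-decreasing. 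The factor $\rho^{-2\alpha(\gamma-\gamma_1)}$ is not a constant. No renaming of $c,c',\gamma,\gamma_1$ turns this into \eqref{eqn:cor1-freq-mono-concl1}, so your claim that everything combines into that exponent is false as written.

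Your fallback cannot be read off Theorem \ref{thm:freq-mono} as stated, because a single $E$ appears in both terms. It can be made rigorous with two elementary facts. First, since $\phi-(r/\rho)\phi'\ge 0$ (as in the proof of Lemma \ref{lem:D-error}), $D'\ge -(n-1)\rho^{-1}D$, hence $f'\ge -(n+1)\gamma_1\rho^{-1}f$. Second, $f\le (cE(\rho))^{\gamma_1}$. Together these give, for all $\rho\ge 1$,
\[
c'\big((c_0\eta^2)^{\gamma-\gamma_1}-E(\rho)^{\gamma-\gamma_1}\big)f'\ge -C\eta^{2\gamma}\rho^{-2\alpha\gamma_1-1}.
\]
This yields the constant coefficient $c'(c_0\eta^2)^{\gamma-\gamma_1}$, but the first exponent becomes $-C\eta^{2\gamma}\rho^{-2\alpha\gamma_1}$. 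The decay is slower, not lost as you suggest. Enlarging $E^{\gamma-\gamma_1}$ inside \eqref{eqn:D-error-estimate-1} gives the same thing. Neither variant has both features of the displayed form at once, and the paper's one-line argument does not justify that combination either. Either variant suffices for the only use of the corollary, the existence of $\lim_{\rho\to\infty}N_{T,P}(\rho)$ in Theorem \ref{thm:structure-at-infty}, since both correction terms tend to $0$. Finally, drop the remark that \eqref{eqn:freq-mono-concl2} keeps $\log H$ finite, which is circular. Argue $H_{T,P}(\rho)>0$ via \eqref{eqn:H-error-concl3} and unique continuation instead.
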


\begin{proof}
    This follows directly from \eqref{eqn:freq-mono-concl1}, since by Lemma \ref{lem:Linfty-by-L2} we have $E_{T,P}(\rho)\leq \eta^2\rho^{-2\alpha}$ for all $1\leq\rho\leq R/2$, provided $\eta(n, m, \theta)$ is small.  From Theorem \ref{thm:freq-mono} we get
    \[
    \frac{d}{d\rho}\log N_{T,P}(\rho) \geq -c\eta^{2\gamma}\rho^{-2\alpha\gamma-1} - c^\prime \eta^{2\gamma-2\gamma_1}\frac{d}{d\rho}\left[(\rho^{-2}D_{T,P}(\rho))^{\gamma_1}\right]
    \]
    for all $1 \leq \rho \leq R/4$, which implies \eqref{eqn:cor1-freq-mono-concl1} is monotone.
\end{proof}

The second corollary is the standard growth and doubling estimates which follow from the almost monotonicity.  We also defer the proof to Appendix \ref{sec:freq-mono-appendix}.

\begin{cor}\label{cor:H-control}
    Let $T$ be an area-minimizing integral $n$-current in $C_{2\rho_2}$, for $0<\rho_1<\rho_2$, and suppose that
    \[
    (\del T)\res C_{2\rho_2} = 0,\qquad \Theta_T(0,\rho_1/2)\geq 1/2,\qquad \Theta_T(0,2\rho_2)\leq\theta,
    \]
    and
    \[
    E_{T,P}(2\rho)\leq \eps^2 \quad \text{and} \quad N_1\leq N_{T,P}(\rho)\leq N_2\qquad \text{for all }\rho_1\leq\rho\leq\rho_2.
    \]
    Then, provided $\eps = \eps(m,n,\theta)$ is sufficiently small, we have
    \begin{equation}\label{eqn:H-control-concl1}
(\rho_2/\rho)^{2N_1 - c \eps^{2\gamma}} \leq \frac{H_{T, P}(\rho_2)}{H_{T, P}(\rho_1)} \leq (\rho_2/\rho_1)^{2N_2 + c \eps^{2\gamma}}
\end{equation}
and
\begin{equation}\label{eqn:H-control-concl2}
E_{T, P}(\rho_2) \leq c (\rho_2/\rho_1)^{2N_2 + c \eps^{2\gamma}} E_{T, P}(\rho_1).
\end{equation}
where $c = c(n,m,\theta)\in (0,\infty)$.
\end{cor}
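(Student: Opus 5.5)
The plan is to integrate the two differential inequalities of Theorem \ref{thm:freq-mono} across $[\rho_1,\rho_2]$, using the pinching $N_1\leq N_{T,P}\leq N_2$ to control the log-derivative of $H$.

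\textbf{Step 1: hypotheses of Theorem \ref{thm:freq-mono}.} For each $\rho\in[\rho_1,\rho_2]$ we check the hypotheses at scale $\rho$.
\begin{itemize}
\item The bound $\Theta_T(0,\rho/2)\geq 1/2$ follows from $\Theta_T(0,\rho_1/2)\geq1/2$ by monotonicity.
\item The bound $\Theta_T(0,2\rho)\leq\theta$ follows from $\Theta_T(0,2\rho_2)\leq\theta$.
\item We take $E=\eps^2$, which is admissible since $E_{T,P}(2\rho)\leq\eps^2$.
\end{itemize}
The case where $\spt\|T\|$ coincides with $P$ is excluded, since otherwise $H\equiv 0$ and $N$ is undefined. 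So \eqref{eqn:freq-mono-concl2} gives, for all $\rho_1\leq\rho\leq\rho_2$,
\[
2\rho^{-1}N_1-c\rho^{-1}\eps^{2\gamma}\leq \frac{d}{d\rho}\log H_{T,P}(\rho)\leq 2\rho^{-1}N_2+c\rho^{-1}\eps^{2\gamma}.
\]

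\textbf{Step 2: integrate.} Integrating from $\rho_1$ to $\rho_2$ gives \eqref{eqn:H-control-concl1} directly. The left-hand side of \eqref{eqn:H-control-concl1} should read $(\rho_2/\rho_1)^{2N_1-c\eps^{2\gamma}}$. Here I use that $H_{T,P}$ is absolutely continuous in $\rho$, which holds because $\phi$ is Lipschitz; the derivative formula is understood a.e.

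\textbf{Step 3: from $H$ to the height excess.} To get \eqref{eqn:H-control-concl2}, I need to compare $E_{T,P}$ with $H_{T,P}$ at comparable scales. Since $\phi'=-2$ on $[1/2,1]$, we have
\[
H_{T,P}(\rho)=2\rho^{-n}\int_{C_\rho\setminus C_{\rho/2}} d_P^2|\nabla r|^2\,d\|T\|\leq 2\rho^{2}E_{T,P}(\rho).
\]

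\emph{Upper bound for $E$.} I bound $E_{T,P}(\rho_2)$ above by $H$-values via a dyadic decomposition $C_{\rho_2}=\bigcup_k (C_{2^{-k}\rho_2}\setminus C_{2^{-k-1}\rho_2})$. The innermost region $C_{\rho_1}$ is controlled by $E_{T,P}(\rho_1)$ directly. Each annulus at scale $s$ gives a contribution $\int_{\text{annulus}} d_P^2 \,d\|T\|$. This is comparable to $s^nH(s)$ once one knows $|\nabla r|^2\geq 1/2$ on most of the mass. The tilt $1-|\nabla r|^2\lesssim\|\pi_T-\pi_P\|^2$ is small in $L^1$ by Lemma \ref{lem:tilt-by-L2}, but it multiplies $d_P^2$. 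So I would use Lemma \ref{lem:Linfty-by-L2} to bound $d_P^2\leq c s^2E_{T,P}(2s)$ pointwise. This gives
\[
s^{-n-2}\int_{\text{annulus}} d_P^2\leq c s^{-2}H(s)+c\eps^2 E_{T,P}(2s),
\]
where the second term is absorbable.

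\emph{Lower bound for $E$.} I also need $H(\rho_1)\lesssim \rho_1^2E_{T,P}(\rho_1)$ up to enlarging the scale slightly. I would handle this by replacing $\rho_1$ by $\rho_1/2$ in the bookkeeping.

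\emph{Combining.} Summing, together with $H(s)\leq (s/\rho_1)^{2N_2+c\eps^{2\gamma}}H(\rho_1)$ from Step 2 and the $s^{-2}$ weights, gives
\[
E_{T,P}(\rho_2)\leq c(\rho_2/\rho_1)^{2N_2+c\eps^{2\gamma}}E_{T,P}(\rho_1).
\]
The $\rho^{-n-2}$ normalization of $E$ and the $\rho^{-2}$ factor relating $H$ and $E$ cancel consistently in this bound.

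\textbf{Main obstacle.} The hard step is the comparison between $E$ and $H$ in Step 3. It requires absorbing the $|\nabla r|^2$ weight and the mass in the inner region $C_{\rho_1}$, using the $L^\infty$ and tilt estimates, with constants depending only on $n,m,\theta$.
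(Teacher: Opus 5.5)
Your Steps 1 and 2 match how the paper proves \eqref{eqn:H-control-concl1}. You are also right that its left-hand side should read $(\rho_2/\rho_1)^{2N_1-c\eps^{2\gamma}}$.

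The gap is in Step 3. The additive error $c\eps^2E_{T,P}(2s)$ produced by your global bound $\sup d_P^2\cdot\int\|\pi_T-\pi_P\|^2$ is not absorbable.

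At the top dyadic scale $s=\rho_2$, this error involves the height in $C_{2\rho_2}$. The hypotheses only give $E_{T,P}(2\rho_2)\leq\eps^2$ there. The frequency pinching holds only on $[\rho_1,\rho_2]$, and $N_{T,P}(\rho_2)$ only sees $C_{\rho_2}$. So $E_{T,P}(2\rho_2)$ can be enormous compared with $E_{T,P}(\rho_2)$ and $E_{T,P}(\rho_1)$. This already happens in the linear picture: a homogeneous harmonic polynomial of degree $K\gg1$ is $2^{-K}$ times smaller on $\partial B_{\rho_2}$ than on $\partial B_{2\rho_2}$. Your argument therefore at best yields $E_{T,P}(\rho_2)\leq c(\rho_2/\rho_1)^{2N_2+c\eps^{2\gamma}}E_{T,P}(\rho_1)+c\eps^2E_{T,P}(2\rho_2)$, which is not the claim.

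At interior scales there is a second problem. The errors $c\eps^2\int_{C_{2s}}d_P^2\,d\|T\|$, summed over roughly $\log_2(\rho_2/\rho_1)$ annuli, are not small just because $\eps$ is. The constant $c$ must not depend on $\rho_2/\rho_1$, so at the very least a bootstrap would be needed.

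The missing ingredient is a multiplicative comparison on each annulus, which is \eqref{eqn:H-error-concl3} of Lemma \ref{lem:H-error}. Under exactly the hypotheses you checked in Step 1, it gives $2s^{1-n}\int_{C_s\setminus C_{s/2}}d_P^2\,r^{-1}\,d\|T\|\leq(1+c\eps^{2\gamma})H_{T,P}(s)$. Hence $\int_{C_s\setminus C_{s/2}}d_P^2\,d\|T\|\leq s^nH_{T,P}(s)$, with no additive term.

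Its proof is not an $L^\infty\times$tilt bound. It works as follows:
\begin{enumerate}
\item Cover the annulus by cylinders at the scales $\bar\sigma(\xi)$ where the height excess first exceeds $\chi^2(\sigma/\rho)^{2\eps}E$.
\item Apply Almgren's Lipschitz approximation on each cylinder to get, locally, $\int d_P^2\|\pi_T-\pi_P\|^2\leq cE^\gamma\int d_P^2$.
\item Use the non-concentration Lemma \ref{lem:H-helper} to move the right-hand side onto disjoint sub-cylinders lying inside the annulus.
\end{enumerate}
The larger scale enters only through the smallness $E_{T,P}(2s)\leq\eps^2$.

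With this estimate the paper simply sums over $s_i=2^{-i}\rho_2$ for $0\leq i\leq\lfloor\log_2(\rho_2/\rho_1)\rfloor$. It uses $H_{T,P}(s_i)\leq(s_i/\rho_1)^{2N_2+c\eps^{2\gamma}}H_{T,P}(\rho_1)$ together with the trivial bound $H_{T,P}(\rho_1)\leq4\rho_1^{-n}\int_{C_{\rho_1}}d_P^2\,d\|T\|$. No shift of $\rho_1$ to $\rho_1/2$ is needed, and such a shift would leave the range where the hypotheses hold.

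Two minor slips. Your formula for $H_{T,P}$ drops the factor $1/r$; the correct bound is $H_{T,P}(\rho)\leq4\rho^2E_{T,P}(\rho)$. Also, absolute continuity of $H_{T,P}$ comes from the weight $|\nabla r|^2$ via the coarea formula, not from $\phi$ being Lipschitz, since $H_{T,P}$ involves the step function $\phi'$.
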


\section{Refined asymptotics}

In this section we prove an asymptotic expansion for $u:Q\R^2\to \R^m$ solving the minimal surface system. We will make use of the Fourier expansion for functions $u:(a,b)\times QS^1\to \R$. Indeed, for any such function, and for each $r$, we can expand $u(r,\cdot)$ in terms of the standard orthogonal basis of $L^2(QS^1)$:
\[
u(r,\theta) = a_0(r) + \sum^\infty_{k=1} \big(a_k(r)\cos(k\theta/Q) + b_k(r)\sin(k\theta/Q)\big),
\]
where
\[
a_0(r) = \frac{1}{2\pi Q}\int^{2\pi Q}_0 u(r,\theta)\, d\theta, \quad a_k(r) = \frac{1}{\pi Q}\int^{2\pi Q}_0 u(r,\theta)\cos(k\theta/Q)\, d\theta, 
\]
\[
b_k(r) = \frac{1}{\pi Q}\int^{2\pi Q}_0u(r,\theta)\sin(k\theta/Q)\, d\theta.
\]
Thus,
\[
\int^{2\pi Q}_0u(r,\theta)^2\, d\theta = a_0(r)^2+ \sum_{k\geq 1}\left(a_k(r)^2+b_k(r)^2\right).
\]
We note that if $u:(1,\infty)\times QS^1\to \R^m$ and $u = O(r^{\alpha+})$, then from the above expressions we see that $a_k(r) = O(r^{\alpha+})$ and $b_k(r) = O(r^{\alpha+})$ for each $k$ also.

If $\Delta u = 0$, then by considering the corresponding ODEs solved by $a_k(r), b_k(r)$ we may expand
\begin{align}
u = A_0 + B_0\log(r) + \sum^\infty_{k=1}&\left(A_kr^{k/Q}+A_{-k}r^{-k/Q}\right)\cos(k\theta/Q)\nonumber\\
& + \sum_{k=1}^\infty\left(B_kr^{k/Q}+B_{-k}r^{-k/Q}\right)\sin(k\theta/Q), \label{eqn:q-harmonic-fourier}
\end{align}
for constants $A_k,B_k\in \R^m$ so that, for each $r$,
\begin{align}
    \int_{QS^1}u(r,\theta)^2\, d\theta = (A_0+B_0\log(r))^2+\sum^\infty_{k=1}&\left(A_kr^{k/Q}+A_{-k}r^{-k/Q}\right)^2\nonumber\\
    & + \sum_{k=1}^\infty \left(B_k r^{k/Q}+B_{-k}r^{-k/Q}\right)^2. \label{eqn:q-harmonic-L2}
\end{align}
From \eqref{eqn:q-harmonic-L2} it follows that if $u$ is harmonic and $u = O(r^{\alpha+})$ then $A_k,B_k=0$ for $k/Q>\alpha$. Conversely, if $A_k = B_k =0$ for $k>k_0$, then $u = O(r^{k_0/Q+})$.

\medskip

Our first lemma extends the above analysis to certain Poisson equations.

\begin{lemma}\label{lem:ps}
    Take $Q\in \N$, and $\alpha>0$. Suppose $f:[1,\infty)\times QS^1\to \R^m$ is a smooth function satisfying $f = O(r^{-1-\alpha+})$. Then, there is a smooth function $u:(1,\infty)\times QS^1\to \R^m$ solving
    \[
    \Delta u = f \quad \text{with} \quad u = O_{1,1-}(r^{1-\alpha+}).
    \]
    In fact if $f = O_{k,\gamma}(r^{-1-\alpha+})$ for some $\gamma\in (0,1)$, then $u = O_{k+2,\gamma}(r^{1-\alpha+})$.
\end{lemma}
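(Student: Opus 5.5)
The plan is to build $u$ one Fourier mode at a time by variation of parameters, and then upgrade the resulting $L^2$ bounds on circles to the weighted Hölder bounds by rescaled interior Schauder estimates. It suffices to treat each component of $\R^m$ separately, so we may assume $m=1$. Expand
\[
f(r,\theta) = f_0(r) + \sum_{k\geq1} \big(f_k(r)\cos(k\theta/Q) + g_k(r)\sin(k\theta/Q)\big).
\]
As observed before the lemma, $f = O(r^{-1-\alpha+})$ gives $\big(\sum_k (f_k^2+g_k^2)\big)^{1/2} \leq \|f(r,\cdot)\|_{L^2(QS^1)} = O(r^{-1-\alpha+})$, where the inequality is Parseval up to constant normalisation factors. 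On the mode $\cos(k\theta/Q)$, the equation $\Delta u = f$ becomes the Euler-type ODE
\[
a_k'' + r^{-1}a_k' - \lambda_k^2 r^{-2}a_k = f_k, \qquad \lambda_k = k/Q .
\]
Its homogeneous solutions are $r^{\pm\lambda_k}$ for $k\geq1$, and $1,\log r$ for $k=0$.

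For $k\geq1$, I take the particular solution
\[
a_k(r) = \frac{1}{2\lambda_k}\Big( -\, r^{\lambda_k}\int_r^{\star} s^{1-\lambda_k}f_k(s)\,ds \;-\; r^{-\lambda_k}\int_1^r s^{1+\lambda_k}f_k(s)\,ds\Big),
\]
up to the sign convention for the first term. The lower limit $\star$ is chosen as follows.
\begin{itemize}
\item If $\lambda_k > 1-\alpha$, then $\star=\infty$. The integrand $s^{1-\lambda_k}f_k$ is $O(s^{-\lambda_k-\alpha+})$, which is integrable at infinity.
\item If $\lambda_k \leq 1-\alpha$, then the first integral is instead taken from $1$ to $r$.
\end{itemize}
In both cases a direct estimate gives
\[
|a_k(r)| \lesssim \frac{1}{\lambda_k}\Big(\frac{1}{|\lambda_k-(1-\alpha)|} + \frac{1}{\lambda_k+2-\alpha}\Big)\, r^{1-\alpha+}\sup_{s\leq r} s^{1+\alpha-}|f_k(s)|.
\]
A suitable weighted version of this bound keeps the individual $f_k$ inside the integrals, so that Cauchy–Schwarz can be applied later. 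In the resonant case $\lambda_k = 1-\alpha$ a factor $\log r$ appears, which is absorbed by the ``$+$'' in $O(r^{1-\alpha+})$. Only finitely many $k$ are near resonance, so we may work with a fixed small loss $\eps$ in the exponent. For $k=0$, variation of parameters with basis $1,\log r$, integrating both terms from $1$, gives $a_0 = O(r^{1-\alpha+})$ directly. The sine modes $b_k$ are treated identically.

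The main obstacle is summing the modes with uniform constants. For $\lambda_k$ large the prefactor is $O(\lambda_k^{-2})$, and $\sum_k \lambda_k^{-4}<\infty$. I will apply Cauchy–Schwarz inside the integrals, using $\sum_k f_k(s)^2 \lesssim s^{-2-2\alpha+}$, to get
\[
\|u(r,\cdot)\|_{L^2(QS^1)} = \Big(\sum_k (a_k^2+b_k^2)\Big)^{1/2} = O(r^{1-\alpha+}),
\]
with the $\eps$-loss bookkeeping done uniformly in $k$. The series defining $u$ then converges in $L^2_{loc}$, with derivatives converging in $L^2_{loc}$ as well. Hence $u$ is a weak solution of $\Delta u = f$ on $(1,\infty)\times QS^1$, and it is smooth by local elliptic regularity. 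Locally away from the origin, $u$ is just a function on an open subset of $\R^2$.

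For the pointwise and derivative bounds, fix $r\gg1$ and rescale: set $u_r(x) = u(rx)$ on the annulus $Q\R^2\cap A_{4,1/4}$, which is a union of finitely many simply connected pieces. Then
\[
\Delta u_r = r^2 f(r\,\cdot) = O(r^{1-\alpha+}), \qquad \|u_r\|_{L^2} = O(r^{1-\alpha+}),
\]
where the second bound comes from integrating the circle $L^2$ bound in the radius. Interior $W^{2,p}$ and Schauder estimates on $A_{2,1/2}$ give $\|u_r\|_{C^{1,\gamma}} \lesssim r^{1-\alpha+}$ for every $\gamma<1$, since $f$ is bounded. Scaling back, this is exactly $u = O_{1,1-}(r^{1-\alpha+})$. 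If moreover $f = O_{k,\gamma}(r^{-1-\alpha+})$, then $r^2 f(r\,\cdot)$ has $C^{k,\gamma}$ norm $O(r^{1-\alpha+})$ on the rescaled annulus. Higher-order Schauder estimates then give $\|u_r\|_{C^{k+2,\gamma}} \lesssim r^{1-\alpha+}$, which unscales to $u = O_{k+2,\gamma}(r^{1-\alpha+})$.
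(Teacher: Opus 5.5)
Your construction is the paper's. After Fubini, the paper's $a_k(r)=r^{-k/Q}\int_1^r s^{2k/Q-1}\int_{s_k}^s t^{1-k/Q}f_k(t)\,dt\,ds$ is your variation-of-parameters formula up to a multiple of $r^{-k/Q}$. It uses the same rule: the $r^{k/Q}$-term is integrated from $\infty$ if and only if $k/Q>1-\alpha$. The regularity step is the same rescaled Schauder argument. The only difference is how the modes are summed:
\begin{itemize}
\item the paper applies Cauchy--Schwarz against the weight $t^{1+2\beta}$ and uses finiteness of $\int_1^\infty t^{1+2\beta}\|f(t,\cdot)\|_{L^2(QS^1)}^2\,dt$;
\item your observation that the per-mode constant is $O(\lambda_k^{-2})$ already suffices on its own. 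Indeed $|f_k(s)|\le 2\sup_\theta|f(s,\theta)|=O(s^{-1-\alpha+})$ uniformly in $k$ and $\sum_k\lambda_k^{-4}<\infty$, so no Cauchy--Schwarz is needed.
\end{itemize}

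There is, however, a genuine gap: a step fails when $\alpha>1$. This is the regime in which the lemma is actually used. In Lemma \ref{lem:refined-asymptotics} it is applied with $3-3\alpha_Q$ in place of $\alpha$, which is $3/2$ when $Q=2$ and $\alpha_Q=1/2$.

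\textbf{The mode $k=0$.} Integrating both terms from $1$ gives $ra_0'=\int_1^r tf_0\,dt\to\mathrm{const}$, so $a_0$ grows like $\log r$. For example, $f_0=r^{-1-\alpha}$ gives $a_0=\frac{\log r}{\alpha-1}+O(1)$, not $O(r^{1-\alpha+})$.

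\textbf{Modes $k\ge 1$ with $\lambda_k<\alpha-1$.} The term $r^{-\lambda_k}\int_1^r s^{1+\lambda_k}f_k\,ds$ equals $c_k r^{-\lambda_k}+O(r^{1-\alpha+})$ with $c_k\neq 0$ in general, and this decays more slowly than $r^{1-\alpha}$. Your displayed bound hides this: its second denominator should be $\lambda_k+1-\alpha$, and the estimate is valid only when that quantity is positive.

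\textbf{The fix.} For the finitely many modes with $\lambda_k<\alpha-1$ (including $k=0$ when $\alpha>1$), take \emph{both} integrals from $\infty$:
\[
a_0(r)=\int_r^\infty s\log(s/r)\,f_0(s)\,ds,\qquad a_k(r)=\frac{1}{2\lambda_k}\Big(-r^{\lambda_k}\int_r^\infty s^{1-\lambda_k}f_k\,ds+r^{-\lambda_k}\int_r^\infty s^{1+\lambda_k}f_k\,ds\Big).
\]
These converge and are $O(r^{1-\alpha+})$, since $s^{1+\lambda_k}f_k=O(s^{\lambda_k-\alpha+})$ with $\lambda_k-\alpha<-1$. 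The resonant case $\lambda_k=\alpha-1$ only costs a logarithm, which the $+$ absorbs. Only finitely many modes change, so your summation argument is untouched.

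The paper's proof has the same defect. Its choice $s_k=\infty$ leaves a constant (for $k=0$) or a $c\,r^{-k/Q}$ term (for $0<k/Q<\alpha-1$), and \eqref{eqn:ps-3.5} fails once $\beta>1+k/Q$. This is harmless downstream, because such terms are absorbed into the harmonic part of the expansion in Lemma \ref{lem:refined-asymptotics}. As a proof of the lemma as stated, however, both arguments need the modification above.
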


\begin{proof}
    We shall construct a solution in terms of its complex Fourier expansion
    \begin{equation}\label{eqn:ps-1}
        u(r,\theta) = \sum_{k\in\Z}a_k(r)e^{\sqrt{-1}k\theta/Q},
    \end{equation}
    for $a_k(r)$ to be defined below.  Set
    \[
    f_k(r):=\frac{1}{2\pi Q}\int_0^{2\pi Q}f(r,\theta)e^{-\sqrt{-1}k\theta/Q}\, d\theta ,
    \]
    and note that $f_k = O(r^{-1-\alpha+})$. (We will slightly abuse notation here as $a_k(r),f_k(r)\in\C^m$.)

    We then have $\Delta u = f$ only if each coefficient $a_k$ solves the ODE
    \[
    a_k^{\prime\prime}+r^{-1}a_k^\prime - r^{-2}(k/Q)^2a_k = f_k,
    \]
    which can be written in divergence form as
    \begin{equation}\label{eqn:ps-2}
    r^{k/Q-1}\big(r^{1-2k/Q}(r^{k/Q}a_k)^\prime\big)^\prime = f_k.
    \end{equation}
    For any $k\geq 0$, since $\alpha>0$ we can find explicit solutions $a_k(r)$ of \eqref{eqn:ps-2} of the form
    \[
    a_k(r) = r^{-k/Q}\int^r_1\left(s^{2k/Q-1}\int^s_{s_k}t^{1-k/Q}f_k(t)\, dt\right)\, ds,
    \]
    where we set $s_k=\infty$ if $k/Q>1-\alpha$ and $s_k=1$ if $k/Q\leq 1-\alpha$.

  We now claim that these choices of $a_k$ in \eqref{eqn:ps-1} give the desired solution. First, let us fix $\beta\in (0,\alpha)$ so that $\{k\in \Z:k/Q>1-\alpha\} = \{k\in \Z:k/Q>1-\beta\}$. Then, observe that by our assumption of $f = O(r^{-1-\alpha+})$, for $\eps = \eps(\alpha,\beta)\in (0,1)$ sufficiently small, we have
    \[
    \vertiii{f}_\beta^2:=\int^\infty_1 t^{1+2\beta}\|f(t,\cdot)\|^2_{L^2(QS^1)}\, dt \leq c(f,\eps)\int^\infty_1 t^{-1+2(\beta-\alpha+\eps)}\, dt<\infty.
    \]
    We now compute bounds on $a_k(r)$ as follows. If $k/Q>1-\alpha$ (and hence $k/Q>1-\beta$) and $r\geq 1$, we have:
    \begin{align*}
        |a_k(r)| & \leq r^{-k/Q}\int^r_1\left( s^{2k/Q-1}\int^\infty_1 t^{1-k/Q}|f_k(t)|\, dt\right)\, ds\\
        & \leq r^{-k/Q}\int^r_1 s^{2k/Q-1}\left(\int^\infty_1 t^{1+2\beta}f_k(t)^2\, dt\right)^{1/2}\left(\int^\infty_s t^{1-2k/Q-2\beta}\, dt\right)^{1/2}\, ds\\
        & \leq \left(\int^\infty_1 t^{1+2\beta}f_k(t)^2\, dt \right)^{1/2}\cdot r^{-k/Q}\int^r_1 s^{2k/Q-1}\left(\frac{s^{2-2k/Q-2\beta}}{2k/Q-2+2\beta}\right)^{1/2}\, ds\\
        & \leq \frac{1}{\sqrt{2}}\left(\int^\infty_1 t^{1+2\beta}f_k(t)^2\, dt \right)^{1/2}\frac{r^{1-\beta}-r^{-k/Q}}{(k/Q+(1-\beta))(k/Q-(1-\beta))^{1/2}}.
    \end{align*}
    The above computation was justified because $2-2k/Q-2\beta<0$. We deduce that for every such $k$, we have
    \begin{equation}\label{eqn:ps-3.5}
        |a_k(r)|^2 \leq c(\beta,\alpha,Q)r^{2-2\beta}\int^\infty_1 t^{1+2\beta}f_k(t)^2\, dt \qquad \text{for all }r\geq 1.
    \end{equation}
    On the other hand, if $k/Q\leq 1-\alpha$, then we can compute similarly
    \begin{align*}
        |a_k(r)| & \leq r^{-k/Q}\int^r_1 s^{2k/Q-1}\left(\int^\infty_1 t^{1+2\beta}f_k(t)^2\, dt\right)^{1/2}\left(\int^s_1 t^{1-2k/Q-2\beta}\, dt\right)^{1/2}\, ds\\
        & \leq \left(\int^\infty_1 t^{1+2\beta}f_k(t)^2\, dt\right)^{1/2} r^{-k/Q}\int^r_1 s^{2k/Q-1}\left(\frac{s^{2-2k/Q-2\beta}-1}{2-2k/Q-2\beta}\right)^{1/2}\, ds\\
        & \leq \frac{1}{\sqrt{2}}\left(\int^\infty_1 t^{1+2\beta}f_k(t)^2\, dt\right)^{1/2}\frac{r^{1-\beta}-r^{-k/Q}}{(k/Q+(1-\beta))(1-\beta-k/Q)^{1/2}},
    \end{align*}
    which is justified because $1-k/Q-\beta \geq \alpha-\beta>0$. We deduce that \eqref{eqn:ps-3.5} also holds for every such $k$, and therefore
    \[
    \sum_k |a_k(r)|^2 \leq c(\alpha,\beta,Q)r^{2-2\beta}\int^\infty_1 t^{1+2\beta}\|f(t,\cdot)\|_{L^2(QS^1)}^2\, dt \quad\text{for all }r\geq 1.
    \]
    It follows that the sum in \eqref{eqn:ps-1} converges in $L^2$ to an $L^2$ function $u:(1,\infty)\times QS^1\to \R^m$ which weakly solves the equation $\Delta u = f$ and admits the $L^2$ bound
    \begin{equation}\label{eqn:ps-4}
    \|u(r,\cdot)\|_{L^2(QS^1)}\leq c(\alpha,\beta,Q)r^{1-\beta}\vertiii{f}_\beta \quad \text{for all }r\geq 1\text{ and }0<\beta<\alpha.
    \end{equation}
    From standard elliptic PDE theory, $u$ is smooth and solves $\Delta u = f$ in the classical sense. From \eqref{eqn:ps-4} we have $u = O(r^{1-\alpha+})$, and so the asymptotic $u = O_{1,1-}(r^{1-\alpha+})$ follows from $C^{1,\alpha}$ Schauder theory. The further asymptotics $u = O_{k+2,\gamma}(r^{1-\alpha+})$ when $f = O_{k,\gamma}(r^{-1-\alpha+})$ follow from $C^{2,\alpha}$ Schauder theory.
\end{proof}

The next two lemmas are the main results of this section.

\begin{lemma}[Refined asymptotics near a multiplicity $Q$ plane]\label{lem:refined-asymptotics}
    Take $Q \in \N$, $\alpha \in (0, 1]$. Suppose $v:(1,\infty)\times QS^1\to \R^m$ is a $C^2$ function solving the minimal surface system \eqref{eqn:mss} which admits the decay $v = O(r^{1-\alpha+})$. Let $\alpha_Q:= \max\{k/Q: k/Q\leq1-\alpha,\ k\in\Z\}$.

    Then, we can find $A_k,B_k\in \R^m$ and $r_0>1$ so that, for $r\geq r_0$, $v$ admits the expansion
    \begin{align}
        v(r,\theta) = & \sum_{0<k/Q\leq\alpha_Q}\big(A_kr^{k/Q}\cos(k\theta/Q) + B_kr^{k/Q}\sin(k\theta/Q)\big) \label{eqn:refined-concl1}\\
        & \hspace{1em} + A_0\log(r) + B_0 \nonumber \\
        & \hspace{2em} + \sum_{3\alpha_Q-2<k/Q<0}\big(A_kr^{k/Q}\cos(k\theta/Q)+B_kr^{k/Q}\sin(k\theta/Q)\big) \nonumber \\
        & \hspace{3em} + O_{2,1-}(r^{3\alpha_Q-2+}). \nonumber
    \end{align}
    In particular, if $Q=2$ we have the expansion
    \[
    v(r,\theta) = A_1 r^{1/2}\cos(\theta/2) + B_1r^{1/2}\sin(\theta/2) + A_0\log(r) + B_0 + O_2(r^{-1/2+}),
    \]
    and if $Q=1$ we have the expansion
    \[
    v(r,\theta) = A_0\log(r) + B_0 + \frac{A_{-1}}{r}\cos(\theta) + \frac{B_{-1}}{r}\sin(\theta) + O_2(r^{-2+}).
    \]
\end{lemma}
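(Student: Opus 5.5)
The strategy is to treat the minimal surface system as a Poisson equation $\Delta v = F(Dv,D^2v)$ via \eqref{eqn:mss-rough}. We then bootstrap decay of the right-hand side, subtract a particular solution from Lemma \ref{lem:ps}, and read off the Fourier expansion \eqref{eqn:q-harmonic-fourier} of the harmonic remainder.

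\textbf{Step 1: derivative decay.} From $v = O(r^{1-\alpha+})$ and interior Schauder estimates on the annuli $A_{2r,r}$, I will upgrade to derivative decay. On each annulus, $v$ lifts to single-valued functions on balls of radius comparable to $r$. Rescaling $v_r(x) := r^{-1}v(rx)$ gives $|v_r| \lesssim r^{-\alpha+}$, which is small for large $r$. The minimal surface system is then uniformly elliptic with small data, so $C^{2,\gamma}$ estimates give $v = O_{2,\gamma}(r^{1-\alpha+})$ for every $\gamma<1$. Concretely, $|Dv| = O(r^{-\alpha+})$ and $|D^2 v| = O(r^{-1-\alpha+})$. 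I expect this requires first knowing $Dv$ is small, to ensure ellipticity in the rescaled picture; smallness of $v_r$ in $C^0$ gives this through standard regularity for the minimal surface system with small oscillation. This is where the hypothesis that $v$ is $C^2$ and a genuine graph enters.

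\textbf{Step 2: first bootstrap.} With these bounds, $f := F(Dv,D^2v) = Dv\star Dv\star D^2 v = O_{0,\gamma}(r^{-1-3\alpha+})$. By Lemma \ref{lem:ps} with exponent $3\alpha$, there is $w = O_{2,\gamma}(r^{1-3\alpha+})$ with $\Delta w = f$. Then $h := v - w$ is harmonic on $QS^1\times(1,\infty)$, so it has the expansion \eqref{eqn:q-harmonic-fourier}. Since $h = O(r^{1-\alpha+})$, \eqref{eqn:q-harmonic-L2} kills all modes with $k/Q > 1-\alpha$; the remaining growing modes are exactly those with $0<k/Q\le \alpha_Q$. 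So
\[
v = \sum_{0<k/Q\le\alpha_Q}(\ldots) + A_0\log r + B_0 + (\text{negative modes}) + w .
\]
This shows $v = O_{2}(r^{\alpha_Q+})$ (the log being absorbed), which improves the hypothesis from $1-\alpha$ to $\alpha_Q \le 1-\alpha$.

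\textbf{Step 3: second pass.} Rerun Steps 1 and 2 using $v = O_{2,\gamma}(r^{\alpha_Q+})$. This gives $Dv = O(r^{\alpha_Q-1+})$ and $D^2v = O(r^{\alpha_Q-2+})$, hence $f = O_{0,\gamma}(r^{3\alpha_Q-4+})$. Write $3\alpha_Q - 4 = -1-\alpha'$ with $\alpha' = 3-3\alpha_Q>0$, which is allowed since $\alpha_Q<1$. Lemma \ref{lem:ps} then produces $w = O_{2,\gamma}(r^{3\alpha_Q-2+})$. The harmonic part $h = v-w$ again expands as in \eqref{eqn:q-harmonic-fourier}. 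Its modes with $k/Q>\alpha_Q$ vanish by the growth bound. Its negative modes with $k/Q \le 3\alpha_Q-2$ are $O_{2}(r^{3\alpha_Q-2})$ and can be absorbed into the error. Here I need convergence of the tail $\sum_{k/Q\le 3\alpha_Q-2}$ with derivative bounds, which follows from the $L^2$ identity and interior estimates for harmonic functions on annuli. This yields \eqref{eqn:refined-concl1} with error $O_{2,1-}(r^{3\alpha_Q-2+})$.

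\textbf{Special cases.} For $Q=2$ we have $\alpha_Q = 1/2$ (for $\alpha\le1/2$), so the error exponent is $-1/2$. For $Q=1$ we have $\alpha_Q=0$ and the error is $r^{-2+}$, leaving only the $k=-1$ mode.

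\textbf{Main obstacle.} I expect the main obstacle to be Step 1: getting scale-invariant $C^{2,\gamma}$ control on annuli of a multivalued branched domain from mere $C^0$ decay, and then ensuring the error in Lemma \ref{lem:ps} is compatible with the $\log$ term. Note that the $\log$ only appears in $v$, not in the estimate of $f$, beyond the $+$ loss.
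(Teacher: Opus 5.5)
\textbf{Gap in Step 2.} Your overall architecture matches the paper's: derivative bounds from Allard/Schauder, a particular solution from Lemma \ref{lem:ps} for $\Delta v = Dv\star Dv\star D^2v$, and then the Fourier expansion \eqref{eqn:q-harmonic-fourier} of the harmonic remainder. But the conclusion of Step 2 is wrong as stated.

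After one pass you have $v = h + w$ with $h = O(r^{\alpha_Q+})$ and $w = O(r^{1-3\alpha+})$. What you actually obtain is therefore
\[
v = O(r^{\max\{\alpha_Q,\,1-3\alpha\}+}),
\]
not $v = O(r^{\alpha_Q+})$. You accounted for the growth of $h$ but forgot that of the particular solution $w$. Whenever $1-3\alpha>\alpha_Q$, the term $w$ dominates. For example $Q=2$ with $\alpha<1/6$, which is exactly the regime of Theorem \ref{thm:structure-at-infty}, where $\alpha=\alpha(m)$ comes from Rivi\`ere's decay and is small. Feeding the weaker bound into Step 3 gives an error of order $r^{3(1-3\alpha)-2+}=r^{1-9\alpha+}$, which can even grow. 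That is nowhere near the claimed $O_{2,1-}(r^{3\alpha_Q-2+})$. So the input to your Step 3 is unjustified.

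\textbf{Fix by iteration.} Suppose $v = O(r^{\beta+})$ for some $\beta\in[\alpha_Q,1-\alpha]$, and re-derive the derivative bounds $v=O_{2,\gamma}(r^{\beta+})$ as in Step 1. Your argument then gives
\[
v = O(r^{\max\{\alpha_Q,\,3\beta-2\}+}).
\]
This holds because the harmonic part loses all modes with $k/Q>\beta$, and no quotient $k/Q$ lies in $(\alpha_Q,1-\alpha]$. Since $1-(3\beta-2)=3(1-\beta)$, the distance of the exponent from $1$ triples each pass. After finitely many passes (roughly $\log_3((1-\alpha_Q)/\alpha)$) you reach $\beta=\alpha_Q$, and your Step 3 then goes through as written.

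\textbf{The paper's shortcut.} The paper avoids the iteration entirely. It lets $\beta^*$ be the \emph{least} exponent in $[0,1-\alpha]$ with $v=O(r^{\beta^*+})$ and runs the argument once. This yields
\[
v=O(r^{\max\{k_0/Q,\;3\beta^*-2\}+}),
\]
where $k_0/Q$ is the largest such quotient $\le\beta^*$. Since $3\beta^*-2<\beta^*$, minimality forces $\beta^*=k_0/Q$ in one stroke. The expansion is then read off from the Fourier series of $v-v_p$; if $\beta^*$ turns out smaller than $\alpha_Q$, that only improves the error term.
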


\begin{proof}
    Let $\alpha_Q$ be the smallest number $\in [0,1-\alpha]$ for which $v = O(r^{\alpha_Q+})$.  If $r\gg 1$, we can assume that $|v(r,\theta)|/r\leq\eta$, for some $\eta = \eta(n,m)$ suitably small. By applying Allard's regularity theorem to the single-sheeting surfaces $\{(r\cos(\theta),r\sin(\theta),v(r,\theta): r>0,\ |\theta-\theta_0|<\pi/2\}$ for suitable choices of $\theta_0$, we get the a priori bounds
    \begin{align*}
        |Dv|_{C^{0}(\{R<r<2R\})} + R^\gamma[Dv]_{\gamma;\{R<r<2R\}} & \leq c(m,\gamma)R^{-1}|v|_{C^0(\{R/2<r<4R\})}\\
        & \leq c(m,\gamma,\eps)\eta R^{-1+\alpha_Q+\eps}
    \end{align*}
    for any $\gamma\in (0,1)$, any $\eps>0$, and all $R\gg 1$. We can then apply Schauder theory to the minimal surface system \eqref{eqn:mss} to get
    \[
    R^{k-1}|D^kv|_{C^0(\{R<r<2R\})} \leq c(m,\alpha,\eps,k)R^{-1+\alpha_Q+\eps} \qquad \text{for all }k\geq 2.
    \]
    In other words, $v = O_k(r^{\alpha_Q+})$ for all $k\geq 2$.

    Using form \eqref{eqn:mss-rough} of the minimal surface system, $v$ solves
    \[
    \Delta v = F(Dv,D^2v) \equiv Dv\star Dv\star D^2v = O_1(r^{3\alpha_Q-4+}).
    \]
    From Lemma \ref{lem:ps}, on $\{r>r_0\}$ for $r_0$ large there is a solution $v_p$ to
    \[
    \Delta v_p = F(Dv,D^2v) \quad \text{which obeys} \quad v_p = O_{2,1-}(r^{3\alpha_Q-2+}).
    \]
    Now $v-v_p$ is a harmonic function which is $O(r^{\alpha_Q+})$, and hence by the discussion below \eqref{eqn:q-harmonic-L2} $v-v_p = O_{2,1-}(r^{k_0/Q+})$ for $k_0$ being the maximal integer satisfying $k_0/Q\leq \alpha_Q$.  We deduce that $v = O(r^{\max\{k_0/Q,3\alpha_Q-2\}+})$, and since $3\alpha_Q - 2 < \alpha_Q$, by our choice of $\alpha_Q \in [0, 1)$ as the minimal exponent for which $v = O(r^{\alpha_Q+})$ we must have $\alpha_Q = k_0/Q$.  The expansion \eqref{eqn:refined-concl1} then follows from the Fourier expansion \eqref{eqn:q-harmonic-fourier} for $v - v_p$.
\end{proof}

An important and immediate consequence of Lemma \ref{lem:refined-asymptotics} is that if $Q = 2$ then $v$ grows at most logarithmically outside the $4$-dimensional subspace $\R^2 \oplus A_1 \R \oplus B_1 \R$.  The below Lemma generalizes this principle to arbitrary $Q$.

\begin{lemma}[Weak subspace rigidity near a planar end]\label{lem:soft-subspace-rigidity}
    Take $Q \in \N$, and $\alpha > 0$.  Let $v: (1,\infty)\times QS^1\to \R^m$ be a smooth solution to the minimal surface system \eqref{eqn:mss} satisfying $v = O(r^{1-\alpha+})$. Let $k:=\max\{k\in \Z:k/Q\leq 1-\alpha\}$. Then, there is a subspace $W$ of dimension $\leq 2k+2\leq 2Q$ with $\R^2\times\{0^m\}\subset W\subset \R^{2+m}$ such that
    \begin{equation}\label{eqn:soft-subspace-concl}
    \pi_W^\perp(v) = A_0\log(r) + B_0 + O_{2,1-}(r^{-1/Q+})
    \end{equation}
    for $A_0,B_0\in W^\perp$.
\end{lemma}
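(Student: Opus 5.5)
The plan is to read off $W$ from the positive-power modes in Lemma \ref{lem:refined-asymptotics}, and then bootstrap the linear equation satisfied by $\pi_W^\perp(v)$ to improve the remainder down to $O(r^{-1/Q+})$.

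\textbf{Choice of $W$.} Apply Lemma \ref{lem:refined-asymptotics} to $v$ to obtain the expansion \eqref{eqn:refined-concl1}. Its only terms of positive homogeneity are $A_jr^{j/Q}\cos(j\theta/Q)+B_jr^{j/Q}\sin(j\theta/Q)$ with $0<j/Q\leq\alpha_Q\leq 1-\alpha$, i.e.\ $1\leq j\leq k$. Set
\[
W:=\R^2\times\{0^m\}\oplus\mathrm{span}\{A_j,B_j:1\leq j\leq k\},
\]
so $\dim W\leq 2+2k$. Since $\alpha>0$ we have $k/Q<1$, hence $k\leq Q-1$ and $2k+2\leq 2Q$. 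Write $w:=\pi_W^\perp(v)$. Then all positive-power modes drop out, and
\[
w=A_0'\log r+B_0'+(\text{modes } r^{j/Q},\ 3\alpha_Q-2<j/Q<0)+O_{2,1-}(r^{3\alpha_Q-2+}),
\]
with $A_0',B_0'\in W^\perp$. The negative modes have exponent $\leq -1/Q$, so they are harmless. The only problem is the remainder when $3\alpha_Q-2>-1/Q$, i.e.\ when $\alpha_Q$ is close to $1$; this is where the real work lies.

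\textbf{Bootstrap for $w$.} The minimal surface system \eqref{eqn:mss} is linear in $v$ once the coefficients $g^{ij}$ are frozen. Because $\pi_W^\perp$ is a constant linear map, $w$ solves $g^{ij}D^2_{ij}w=0$, or equivalently
\[
\Delta w=(\delta^{ij}-g^{ij})D^2_{ij}w .
\]
Here $|\delta^{ij}-g^{ij}|\leq c|Dv|^2=O_1(r^{2\alpha_Q-2+})$, using $v=O_k(r^{\alpha_Q+})$ from the proof of Lemma \ref{lem:refined-asymptotics}. The gain is that this right-hand side is quadratic in the \emph{full} gradient but only linear in $D^2w$. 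So if $w=O_{2,1-}(r^{\beta+})$, then
\[
\Delta w=O_{0,\gamma}(r^{2\alpha_Q-4+\beta+}).
\]
Since $2\alpha_Q-2<0$, Lemma \ref{lem:ps} applies (with exponent $-1-\alpha'$, $\alpha'=2-2\alpha_Q-\beta>0$ as long as $\beta<1$). It gives a particular solution $w_p=O_{2,\gamma}(r^{\beta-(2-2\alpha_Q)+})$. The difference $w-w_p$ is harmonic and $O(r^{\beta+})$. By \eqref{eqn:q-harmonic-fourier}, its modes with positive exponent $j/Q\leq\beta$ would have to be nonzero multiples of positive-homogeneous harmonics. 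This cannot happen: comparing with the expansion of $v$ (uniqueness of Fourier coefficients, and $w_p$ decaying faster), those modes of $w$ are the $W^\perp$-projections of $A_j,B_j$, which vanish.

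\textbf{Iteration and conclusion.} Each round improves the bound on $w-(A_0'\log r+B_0')$ by the fixed amount $2-2\alpha_Q\geq 2/Q>0$. After finitely many steps we reach $\beta\leq 0$, i.e.\ $w=O(r^{0+})$. One more application then gives $\Delta w=O(r^{2\alpha_Q-4+})$ and a particular solution $O(r^{2\alpha_Q-2+})$. Since $\alpha_Q\leq(Q-1)/Q$, this is $O(r^{-2/Q+})\subset O(r^{-1/Q+})$. The harmonic part is $A_0\log r+B_0+O_{2}(r^{-1/Q})$, which yields \eqref{eqn:soft-subspace-concl}.

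\textbf{Main obstacle.} The delicate point is the bookkeeping showing that no new positive-power harmonic modes appear in $w$ during the iteration. I would handle it by noting that the harmonic part at each stage differs from that in Lemma \ref{lem:refined-asymptotics} only by decaying functions. Its positive modes are therefore exactly $\pi_W^\perp A_j,\pi_W^\perp B_j=0$.
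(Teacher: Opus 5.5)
There is a genuine gap, and it is exactly at the point you flag as the main obstacle: $W$ cannot be read off once and for all from Lemma \ref{lem:refined-asymptotics}.

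When $3\alpha_Q-2>0$ (possible as soon as $Q\geq 4$, e.g.\ $Q=4$, $\alpha_Q=3/4$), the coefficients $A_j,B_j$ with $0<j/Q\leq 3\alpha_Q-2$ in \eqref{eqn:refined-concl1} are not determined by $v$. The remainder $O(r^{3\alpha_Q-2+})$ dominates those modes. Replacing $v_p$ by $v_p+c\,r^{j/Q}\cos(j\theta/Q)$ shifts $A_j$ by $-c$ and leaves every conclusion of that lemma intact. So ``uniqueness of Fourier coefficients'' has nothing to act on.

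Your claim that the harmonic part at each stage differs from that of Lemma \ref{lem:refined-asymptotics} only by decaying functions is false in precisely the regime where the bootstrap is needed. Indeed, $h=w-w_p$ and $\pi_W^\perp(v-v_p)$ differ by $\pi_W^\perp v_p-w_p$. Both terms solve $\Delta(\cdot)=\pi_W^\perp F$, so the difference is harmonic. But it is only $O(r^{3\alpha_Q-2+})$, and its modes with $0<j/Q\leq 3\alpha_Q-2$ need not vanish. With the explicit construction of Lemma \ref{lem:ps}, for $j/Q$ between the two decay thresholds the first solution integrates the $j$-th mode from $s_j=1$ and the second from $s_j=\infty$. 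Their $j$-th coefficients therefore differ by $\tfrac{C_j}{2j/Q}(r^{j/Q}-r^{-j/Q})$ with $C_j=\int_1^\infty t^{1-j/Q}f_j(t)\,dt$, which has no reason to be zero. So $h$, and hence $w=\pi_W^\perp v$, can carry a genuine $r^{j/Q}$ term in a direction of $W^\perp$. In that case your $W$ fails \eqref{eqn:soft-subspace-concl}.

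The repair, which is the paper's proof, is to build $W$ adaptively, one mode at a time, since only the top remaining mode is intrinsic at each stage. Suppose $u=\langle v,e\rangle$, with $e$ orthogonal to the current subspace, satisfies $u=O_{2,1-}(r^{\ell/Q+})$. The linear equation $g^{ij}(Dv)D^2_{ij}u=0$ together with $|Dv|^2=O(r^{-2/Q+})$ gives $\Delta u=O(r^{(\ell-2)/Q-2+})$ and a particular solution $O(r^{(\ell-2)/Q+})$. Hence $u=a_\ell r^{\ell/Q}\cos(\ell\theta/Q)+b_\ell r^{\ell/Q}\sin(\ell\theta/Q)+O_{2,1-}(r^{(\ell-1)/Q+})$, where $a_\ell,b_\ell$ are now well defined because the remainder is of strictly lower order. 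Add the two coefficient vectors of this mode to the subspace, project again, and repeat from $\ell=k$ down to $\ell=0$. This adds at most two dimensions per mode, so $\dim W\leq 2k+2$ still holds.

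Your ingredients are the right ones: the linear equation for the projection, Lemma \ref{lem:ps}, the Fourier expansion of the harmonic part, and the dimension count. Only the choice of $W$ must be interleaved with the improvement of the remainder. When $\alpha_Q\leq 2/3$ (in particular for $Q\leq 3$, so for the case $Q\leq 2$ used in Theorem \ref{thm:main}), your fixed choice of $W$ does work. A minor slip: in applying Lemma \ref{lem:ps} the exponent is $\alpha'=3-2\alpha_Q-\beta$, not $2-2\alpha_Q-\beta$, though your resulting bound on $w_p$ is correct.
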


\begin{proof}
    First apply Lemma \ref{lem:refined-asymptotics} to deduce $v = O_{2,1-}(r^{k/Q+}) = O_{2,1-}(r^{1-1/Q+})$. Consider any coordinate direction $u := \< v,  e\>$ for $e$ a unit vector orthogonal to $\R^2 \times \{0^m\}$, and suppose that $u = O_{2,1-}(r^{\ell/Q+})$ for some integer $\ell\leq k$. Then $u$ solves
    \begin{equation}\label{eqn:soft-subspace-1}
        g^{ij}(Dv)D_{ij}^2u = 0 \quad \Longrightarrow\quad \Delta u = Dv\star Dv\star D^2u = O_{1-}(r^{-2/Q+\ell/Q-2+}).
    \end{equation}
    Following the proof of Lemma \ref{lem:ps}, we can find a particular solution $u_p = O_{2,1-}(r^{(\ell-2)/Q+})$ to \eqref{eqn:soft-subspace-1}, and hence since $u-u_p$ is harmonic and admits an expansion as in \eqref{eqn:q-harmonic-fourier}, we can expand
    \[
    u = a_\ell r^{\ell/Q}\cos(\ell\theta/Q) + b_\ell r^{\ell/Q}\sin(\ell\theta/Q) + O_{2,1-}(r^{(\ell-1)/Q+})
    \]
    for some $a_\ell,b_\ell\in\R$. We will repeatedly apply the above principle.

    Set $P_0 := \R^2\times\{0^m\}$. Then expand
    \[
    v = A_{k}r^{k/Q}\cos(k\theta/Q) + B_{k} r^{k/Q}\sin(k\theta/Q) + O_{2,1-}(r^{(k-1)/Q+})
    \]
    for $A_{k},B_{k}\in P_0^\perp$. Set $P_1:= P_0\oplus \R A_{k_0} \oplus \R B_{k_0}$, and note that we have $\pi_{P_1^\perp}(v) = O_{2,1-}(r^{(k-1)/Q+})$. Now apply the aforestated principle again to expand
    \begin{align*}
    \pi^\perp_{P_1}(v) & = A_{k-1}r^{(k-1)/Q}\cos((k-1)\theta/Q)\\
    & \hspace{8em} + B_{k-1}r^{(k-1)/Q}\sin((k-1)\theta/Q) + O_{2,1-}(r^{(k-2)/Q+}),
    \end{align*}
    where $A_{k-1},B_{k-1}\in P_1^\perp$. Now set $P_2 := P_1 \oplus \R A_{k-1}\oplus \R B_{k-1}$, and repeat unit we either run out of dimensions (i.e.~$W=\R^{2+m}$, which may only happen when $2k\geq m$) or we obtain a $(2k+2)$-dimensional subspace $W \equiv P_k$ for which \eqref{eqn:soft-subspace-concl} holds. This completes the proof.
\end{proof}

\section{Sheeting at infinity}\label{sec:sheeting}

This section contains our main technical tool, which is the following lemma.

\begin{lemma}[Multiplicity $2$ sheeting]\label{lem:freq-sheeting}
    Given $\eps>0$, there exists $\delta = \delta(m,\eps)\in (0,1)$ so that the following holds. Let $P = \R^2 \times \{0^m\}$, and $T$ be an area-minimizing integral $2$-current in $C_2(0)\subset\R^{2+m}$ satisfying
    \begin{equation}\label{eqn:freq-sheeting-hyp1}
        (\del T)\res C_2 = 0, \quad E_{T,P}(0,2)\leq\delta^2,
    \end{equation}
    and, for some $N_0\in [\eps,1-\eps]$,
    \begin{equation}\label{eqn:freq-sheeting-hyp2}
        |\Theta_T(0,\rho)-2|\leq\delta, \quad |N_{T,P}(\rho)-N_0|\leq \delta \quad \text{for all }1/4\leq\rho\leq 2.
    \end{equation}
    Then, up to flipping the orientation of $T$, we can write
    \begin{equation}\label{eqn:freq-sheeting-concl1}
        T\res (A^2_{1,1/2}\times\R^m) = \llbracket \graph_{2\R^2}(v)\rrbracket,
    \end{equation}
    for some smooth, minimal, function $v:2\R^2\cap A_{1,1/2}\to \R^m$ satisfying
    \begin{equation}\label{eqn:freq-sheeting-concl2}
        |E_{T,P}(0,1)^{-1/2}v - (a\sqrt{r}\cos(\theta/2) + b\sqrt{r}\sin(\theta/2))|_{C^2}\leq \eps,
    \end{equation}
    for some $a,b\in \R^m$ linearly independent. Moreover, we have $|N_0-1/2|\leq \eps$.
\end{lemma}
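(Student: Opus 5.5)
We argue by contradiction and compactness, blowing up by the $L^2$ height excess. Suppose there are $\eps>0$, $\delta_i\to 0$ and currents $T_i$ satisfying \eqref{eqn:freq-sheeting-hyp1}, \eqref{eqn:freq-sheeting-hyp2} with $N_{0,i}\in[\eps,1-\eps]$, but for which the conclusion fails. Passing to a subsequence, $N_{0,i}\to N_0\in[\eps,1-\eps]$.

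\textbf{Step 1: projection degree and Lipschitz approximation.} Lemma \ref{lem:zero-mass} gives $\pi_{P\#}(T_i\res C_{3/2})=q_i\llbracket B^2_{3/2}\rrbracket$ with $|q_i|\le 2$. If $q_i=0$, the same lemma gives $T_i\res C_1=0$, which contradicts $\Theta_{T_i}(0,1/4)\approx 2$. If $|q_i|=1$, the mass excess $\cE$ would be about $1$, which contradicts Lemma \ref{lem:mass-by-L2} together with $E\to0$. Hence, after flipping orientation, $q_i=2$. Lemma \ref{lem:mass-by-L2} then gives $\cE_{T_i,P}\le cE_i$ with $E_i:=E_{T_i,P}(0,2)$, so Theorem \ref{thm:Lipschitz-approx} applies on slightly smaller cylinders covering $C_{3/2}$ and produces Lipschitz $2$-valued approximations $u_i$.

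\textbf{Step 2: blow-up.} Normalize $w_i:=E_i^{-1/2}u_i$. By the standard argument of Almgren/De Lellis--Spadaro, using Lemma \ref{lem:tilt-by-L2}, the error estimates in Theorem \ref{thm:Lipschitz-approx}, and the minimizing property (comparison of $T_i$ with graphs of competitors), $w_i\to w$ strongly in $W^{1,2}_{loc}$ and in $L^2$ up to the cutoff region. The limit $w:B^2_{3/2}\to\cA_2(\R^m)$ is Dirichlet-minimizing.

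The key point is to pass the frequencies to the limit. We need $H_{T_i,P}(\rho)/E_i\to H_w(\rho)$ and $D_{T_i,P}(\rho)/E_i\to D_w(\rho)$ for $\rho\in[1/4,3/2]$, with $\phi$ as in Section \ref{sec:freq}. The $D$ limit uses $\|\pi_T-\pi_P\|^2\approx 2|Du|^2$ on the graph. We also need $H_w>0$, which holds by non-concentration: Theorem \ref{thm:improved-Linfty-L2} with $\bm\pi=P$ gives an $L^\infty$ bound by $E^{1/2}$, and Corollary \ref{cor:H-control} gives a doubling bound, so the $L^2$ mass cannot vanish on annuli. It follows that $N_w(\rho)\equiv N_0$ on $[1/4,3/2]$. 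Hence $w$ is $N_0$-homogeneous, and it is nonzero.

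\textbf{Step 3: classifying $w$.} The main obstacle is to show that $w$ is average-free and that $N_0=1/2$. The average $\bar w=(w_1+w_2)/2$ is harmonic and homogeneous of degree $N_0\in(0,1)$, so it is zero; equivalently, the $\log$ and constant Fourier modes in \eqref{eqn:q-harmonic-fourier} do not appear at positive non-integer degree. With $\bar w=0$ we have $w=\pm w_0$. Since $w$ is homogeneous of non-integer degree, it cannot split into two harmonic sheets, so by Theorem \ref{thm:singular-set-bound} its singular set is $\{0\}$. Therefore $w$ lifts to a single-valued harmonic function on $2\R^2\setminus\{0\}$ that is homogeneous of degree $N_0<1$ and antisymmetric under the deck transformation $\theta\mapsto\theta+2\pi$. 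The expansion \eqref{eqn:q-harmonic-fourier} with $Q=2$ forces $N_0=1/2$ and
\[
w=a\sqrt r\cos(\theta/2)+b\sqrt r\sin(\theta/2).
\]
If $a,b$ were linearly dependent, $w$ would be valued in a line, and $w_0\cdot e=c\sqrt r\cos((\theta-\theta_0)/2)$ would vanish on a ray. We must then rule out a genuine branch point of this one-dimensional type using minimality: a $2$-valued Dirichlet minimizer whose sheets coincide along a ray is not a minimizer (a $1/2$-homogeneous Dirichlet minimizer must have independent $a,b$). This gives $|N_0-1/2|\le\eps$ and linear independence of $a,b$ for the limit. I expect this linear-independence step, together with justifying the frequency convergence in Step 2, to be the hardest part.

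\textbf{Step 4: upgrade to graphicality.} On the annulus $A^2_{1,1/2}$, $w$ consists of two separated smooth sheets, locally $|w_1-w_2|\ge c>0$. Theorem \ref{thm:improved-Linfty-L2}, applied in small cylinders with $\bm\pi$ the two parallel planes approximating the local sheets of $E_i^{1/2}w$, shows that $T_i$ lies in two disjoint slabs at separation of order $E_i^{1/2}$, each slab carrying projection degree one. Allard's theorem in each slab then gives single-valued smooth minimal graphs $C^{1,\gamma}$-close to $E_i^{1/2}w_j$, and Schauder theory for \eqref{eqn:mss} upgrades this to $C^2$ closeness. Following these local graphs around the annulus reproduces the monodromy of $w$, so they patch into a single $v$ on $2\R^2\cap A_{1,1/2}$ satisfying \eqref{eqn:freq-sheeting-concl1}. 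Finally, replacing the normalization $E_{T,P}(0,2)$ by $E_{T,P}(0,1)$ changes the constants only by a bounded factor, so \eqref{eqn:freq-sheeting-concl2} holds after rescaling $a$ and $b$. This contradicts the assumption that the conclusion fails for every $T_i$.
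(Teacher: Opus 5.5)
Your proposal is correct and follows essentially the same route as the paper. You argue by contradiction, blow up by the height excess to a Dirichlet-minimizer $w$, and pass $N_{T_i,P}\to N_w$ to get an average-free, $N_0$-homogeneous $w$ of $\sqrt{r}$-type with $N_0=1/2$. You then apply Theorem \ref{thm:improved-Linfty-L2} with two parallel planes, followed by Allard and patching. Your normalization by $E_{T,P}(0,2)$ instead of $E_{T,P}(0,1)$ is interchangeable with the paper's, via the same doubling from Corollary \ref{cor:H-control} that the paper uses in \eqref{eqn:sheeting-00}.

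The linear independence of $a,b$ that you flag as the hardest step is in fact immediate, and the paper gets it this way. Since the average of $w$ vanishes, the two sheets cannot agree on an open set, because unique continuation would then force $w\equiv 0$. Hence every point with $w=2\llbracket 0\rrbracket$ is singular, so by Theorem \ref{thm:singular-set-bound} and homogeneity $\{w=2\llbracket 0\rrbracket\}\subseteq\{0\}$. This rules out the vanishing ray that dependent $a,b$ would produce.
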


\begin{remark}\label{rem:freq-sheeting}
With essentially a verbatim proof, a similar sheeting theorem holds for $T$ near a density $3$ plane.  Precisely, if instead of $|\Theta_T(0, \rho) - 2| \leq \delta$ one assumes $|\Theta_T(0, \rho) - 3| \leq \delta$, then either
\[
T \res (A^2_{1, 1/2} \times \R^m) = \llbracket \graph_{2\R^2}(v_2) \rrbracket + \llbracket \graph_{\R^2}(v_1) \rrbracket
\]
or
\[
T \res (A^2_{1, 1/2} \times \R^m) = \llbracket \graph_{3\R^2}(v_3) \rrbracket ,
\]
where, writing $E = E_{T, P}(0, 1)$, $E^{-1/2} v_1$ is $\eps$-close to $0$, $E^{-1/2} v_2$ is $\eps$-close to being $1/2$-homogeneous and harmonic, and $E^{-1/2} v_3$ is $\eps$-close to being $k/3$-homogeneous and harmonic for some $k \in \{ 1, 2\}$.

Another case we mention is when the plane has general density $Q \in \N$, but one has a priori the more precise frequency pinching $|N_0 - p/Q| \leq \eps$ for $p, Q$ coprime and $\eps(Q)$ sufficiently small. In this case, the approximating Dirichlet-minimizer necessarily splits into $Q$ multiplicity-one sheets, so the arguments of Lemma \ref{lem:freq-sheeting} imply
\[
T \res (A^2_{1, 1/2} \times \R^m) = \llbracket \graph_{Q\R^2}(v) \rrbracket
\]
where $E^{-1/2} v$ is $\eps$-close to being $p/Q$-homogeneous and harmonic.

We would expect a similar sheeting-type result to hold for general densities $Q \in \N$ and general frequencies $N_0 \in [\eps, 1-\eps]$, but for $Q \geq 4$ one would likely need some analogue of a (branched) center manifold, and a notion of frequency relative to this center manifold, to deal with sheets which are separated at very different scales.  That said, in this more general setting the same proof as Lemma \ref{lem:freq-sheeting} implies that $T\res (A^2_{1,1/2}\times\R^m)$ decomposes as $T=T_1+\cdots+T_N$ for some $N\geq 2$, where each $T_i$ lies within the $\eps E^{1/2}$-neighborhood of either the plane $P$ or some branch of a function modeled on $z^{p/q}$ for $p,q$ coprime satisfying $0<p<q\leq Q$.

We also remark that Krummel--Wickramasekera have informed us of quantitative estimates they have established in the upcoming works \cite{KW26c, KW26d} generalizing Theorem \ref{thm:improved-Linfty-L2} to area-minimizers near disjoint unions of graphs of $C^2$ functions.  While these refined estimates are not needed in the present work, they could be useful in further investigations, for instance when a (branched) center manifold is needed.  We include a brief overview of these estimates in Appendix \ref{app:general-estimate}.
\end{remark}

\begin{proof}
    We proceed by contradiction. Suppose for fixed $\eps>0$, there is a sequence $\delta_i\to 0$ and area-minimizing $2$-currents $T_i$ satisfying \eqref{eqn:freq-sheeting-hyp1} and \eqref{eqn:freq-sheeting-hyp2} with $T_i,\delta_i$ in place of $T,\delta$ respectively, but for which one of the conclusions \eqref{eqn:freq-sheeting-concl1} or \eqref{eqn:freq-sheeting-concl2} fails for all $i$. Set $E_i:= E_{T_i,P}(0,1)$, and note that by Corollary \ref{cor:H-control} we have
    \begin{equation}\label{eqn:sheeting-00}
    \frac{1}{c(m)} E_{T_i, P}(0, 2) \leq E_i \leq 2^4 E_{T_i, P}(0, 2) \quad \forall i \gg 1.
    \end{equation}. 

    After flipping the orientation of $T_i$ as necessary, our hypotheses imply that $T_i\to 2\llbracket \R^2\rrbracket$ in $C_2$ as both currents and varifolds (cf.~Lemma \ref{lem:mass-by-L2}). By Theorem \ref{thm:Lipschitz-approx} (cf.~\cite[Section 5.1]{KW23a}) and \eqref{eqn:sheeting-00}, we can find $\sigma_i\uparrow 1$, Lipschitz functions $u_i:B^n_{2\sigma_i}\to \cA_2(\R^m)$, and closed subsets $K_i\subset B_{2\sigma_i}^n$ such that
    \begin{equation}\label{eqn:sheeting-0}
        T_i\res (K_i\times \R^m) = \mathbf{v}(u_i) \res (K_i\times\R^m);
    \end{equation}
    \begin{equation}\label{eqn:sheeting-1}
        |B^2_{2\sigma}\setminus K_i| + \|T_i\|((B^2_{2\sigma}\setminus K_i)\times\R^m) \leq c_\sigma E^{1+\gamma}_i;
    \end{equation}
    \begin{equation}\label{eqn:sheeting-2}
        \sup_{B^2_{2\sigma}}|u_i|\leq c_\sigma E_i^{1/2}, \quad \sup_{B^2_{2\sigma}}|Du_i|\leq c_\sigma E_i^{\gamma}
    \end{equation}
    for every $\sigma\in (0,\sigma_i]$, where $c_\sigma = c_\sigma(m,\sigma)$ and $\gamma = \gamma(m)$. We note that the first inequality in \eqref{eqn:sheeting-2} comes from Lemma \ref{lem:Linfty-by-L2}. We also have for any $\sigma\in (0,\sigma_i]$,
    \[
    \int_{B^2_{2\sigma}}|Du_i|^2 \leq \int_{C_{2\sigma}}|\vec{T}_i-\vec{P}|^2\, d\|T_i\| + c(m,\sigma)E_i^{1+\gamma} \leq c(m,\sigma)E_i.
    \]
    Therefore, by \cite[Theorem 4.2]{DeLellis_Spadaro_2014} (cf.~\cite[Theorem 2.19]{Almgren_2000}) there is $w\in W^{1,2}(B^2_2;\cA_2(\R^m))$ which is locally Dirichlet-minimizing on $B_2^2$ for which $E_i^{-1/2}u_i\to w$ pointwise a.e.~on $B^2_2$, strongly in $L^2_{\text{loc}}(B^2_2)$, and also $E_i^{-1/2}|Du_i|\to |Dw|$ in $L^2_{\text{loc}}(B_2^2)$. Arguing as in \cite[Lemma 5.5]{KW23a}, we also have the convergence $N_{T_i,P}(\rho)\to N_w(\rho)$ for every $\rho\in (1/4,2)$.

    From \eqref{eqn:sheeting-1}, the choice of normalization, and the strong $L^2(B^n_1)$ convergence $E_i^{-1/2}u_i\to w$, we have $\int_{B_1}|w|^2 = 1$ and so $w\not\equiv 0$. Our hypotheses also give $N_w(\rho) \equiv N_0\in (0,1)$ for all $\rho\in (1/4,2)$. Therefore, $w|_{A_{2,1/4}}$ is $N_0$-homogeneous, and so as $w$ is Dirichlet-minimizing locally on $B_2^2$ and in particular has discrete singular set, by analyticity we know that $w$ must be $N_0$-homogeneous on all of $B_2$. Furthermore, the average $w_a$ of $w$ is a single-valued, $N_0$-homogeneous harmonic function, but since $N_0\in (0,1)$ we must have $w_a\equiv 0$.

    Since the singular set of $w$ is discrete and $w$ is homogeneous, we must have $\sing(w)\subseteq\{0\}$ -- in fact, we have $\{x:w(x)=2\llbracket 0\rrbracket\}\subseteq\{0\}$, as this set must also be discrete, and so if this were not the case we would need $w\equiv 2\llbracket w_a\rrbracket \equiv 2\llbracket 0\rrbracket$, contradicting $w\not\equiv 0$. Therefore, for every $x\in B_2^2\setminus \{0\}$, there is a radius $\rho = \rho(x)>0$ and harmonic functions $w_1,w_2:B_\rho^2(x)\to \R^m$ so that $w = \llbracket w_1(y)\rrbracket + \llbracket w_2(y)\rrbracket$ on $B_\rho^2(x)$, with $w_1\equiv -w_2$ and $w_1(y)\neq 0$ for all $y\in B_\rho^2(x)$.

    Let us first argue that we must have $N_0 = 1/2$. The graph of $w|_{S^1}$ must be a connected double-cover of $S^1$ (otherwise, $w$ would split globally into two single-valued, $N_0$-homogeneous harmonic functions, which would necessarily be zero as $N_0\in (0,1)$, and thus would contradict $w\not\equiv 0$). So, we can lift $w|_{S^1}$ to a single-valued function $\hat{w}$ defined on $2S^1$, which extends to $2\R^2$ as an $N_0$-homogeneous harmonic function satisfying, in polar coordinates $x=r\theta$,
    \[
    w(r,\theta) = \llbracket \hat{w}(r,\theta)\rrbracket + \llbracket \hat{w}(r,\theta+2\pi).\rrbracket
    \]
    From the Fourier expansion \eqref{eqn:q-harmonic-fourier} of $\hat{w}$, we discover that the only possible value for $N_0\in (0,1)$ is $N_0 = 1/2$, corresponding to
    \[
    \hat{w}(r,\theta) = A\sqrt{r}\sin(\theta/2) + B\sqrt{r}\cos(\theta/2).
    \]
    
    We now argue that for every $z\in A^2_{3/2,1/4}$ and $\eps>0$, there is a radius $\rho = \rho(w,z,\eps)>0$ so that, writing $w|_{B_{2\rho}(z)} = \llbracket w_1\rrbracket + \llbracket w_2\rrbracket$, we can decompose for $i\gg1$,
    \begin{equation}\label{eqn:sheeting-a1}
        T_i\res C_\rho(z) = T_{i,1} + T_{i,2},\quad \text{with}\quad \|T_i\res C_{\rho}(z)\| = \|T_{i,1}\| + \|T_{i,2}\|,
    \end{equation}
    where $T_{i,j}$ are non-zero, integral $2$-currents with zero boundary in $C_\rho(z)$, which satisfy
    \begin{equation}\label{eqn:sheeting-a2}
        \sup_{y\in C_\rho(z)\cap \spt\|T_{i,j}\|}d(y,\graph_{\R^2}(E_i^{1/2}w_j))\leq \eps E_i^{1/2} \quad \text{for }j=1,2.
    \end{equation}
    From \eqref{eqn:sheeting-a1} and our original hypothesis that $T_i\to 2\llbracket \R^n\rrbracket$ in $C_2$ as both currents and varifolds, it follows that each $T_{i,j}$ is area-minimizing and $T_{i,j}\to \llbracket \R^2\rrbracket$ in $C_\rho(z)$ as $i\to\infty$, and so from Allard's regularity theorem $T_{i,j}\res C_{\rho/2}(z)$ is a smooth multiplicity-one graph over $\R^2$. Using \eqref{eqn:sheeting-a2} and a covering argument then implies that $T\res (A^2_{1,1/2}\times\R^m)$ is a smooth, multiplicity-one, graph of a function $v$ satisfying \eqref{eqn:freq-sheeting-concl2}, which is the desired conclusion of the lemma.

    We now prove \eqref{eqn:sheeting-a1} and \eqref{eqn:sheeting-a2}. Without loss of generality we can assume $w_1(z)>w_2(z)$. Set $h:= w_1(z)-w_2(z)$. For $\beta = \beta(m,\eps)>0$ to be fixed below, ensure that $\rho = \rho(w,z,\beta)>0$ is chosen sufficiently small so that
    \begin{equation}\label{eqn:sheeting-a3}
        \max\{\osc_{B_{2\rho}(z)}w_1,\osc_{B_{2\rho}(z)}w_2\}\leq\beta h.
    \end{equation}
    Now write
    \[
    F_i := \int_{B^2_{7/4}}\cG(E_i^{-1/2}u_i,w)^2\, dx,
    \]
    and note that by assumption we have $F_i\to 0$. From \eqref{eqn:sheeting-a3} and the definition of $F_i$ we have
    \begin{align*}
        \int_{C_{7/4}}d(y,\graph_{2\R^2}(E_i^{1/2}w))^2\, d\|T_i\| & \leq c\int_{K_i\times\R^m}\cG(u_i,E_i^{1/2}w)^2\, dx + cE_i^{1+\gamma}\\
        & \leq c(m)E_i(F_i+E_i^\gamma).
    \end{align*}
    Define then the union of planes
    \[
    \bm{\pi}_i := (E_i^{1/2}w_1(z) + \R^2)\cup (E_i^{1/2}w_2(x) + \R^2).
    \]
    Then we get
    \begin{align*}
        \rho^{-n-2}\int_{C_{2\rho}(z)}d(y,\bm{\pi}_i)\, d\|T_i\| & \leq 2\rho^{-n-2}\int_{C_{2\rho}(z)}d(y,\graph_{2\R^2}(E_i^{1/2}w))^2\, d\|T_i\|\\
        & \hspace{15em} + c\rho^{-2}E_i\beta^2 h^2\\
        & \leq c(m)\rho^{-2}E_i(\rho^{-n}F_i+\rho^{-n}E_i^\gamma+\beta^2h^2).
    \end{align*}
    By the improved $L^2-L^\infty$ estimate Theorem \ref{thm:improved-Linfty-L2} (applied at scale $\rho$) we have that $\spt\|T_i\|\cap C_\rho(z)$ lies in the $R_i$-neighborhood of $\bm{\pi}_i$, where
    \[
    R_i:= c_1(m)E_i^{1/2}(\rho^{-n/2}F_i^{1/2}+\rho^{-n/2}E_i^{\gamma/2}+\beta h).\]
    However, noting that the planes in $\bm{\pi}_i$ are parallel and at distance $E_i^{1/2}h$ from each other, we can ensure $\beta(m)$ is sufficiently small and $i\gg1$ (depending on $\rho$) to get
    \[
    R_i<\frac{1}{100}E_i^{1/2}h.
    \]
    Therefore, if we define
    \[
    T_{i,j}:= T_i \res [C_\rho(z)\cap B_{2R_i}(E_i^{1/2}w_j(z)+\R^2)],
    \]
    then $(\del T_{i,j})\res C_\rho(z) = 0$ and the decomposition \eqref{eqn:sheeting-a1} holds. The only thing left to verify is that $T_{i,j}\neq 0$. Indeed, if not, then we would have (without loss of generality) $T_i = T_{i,1}$ for infinitely many $i$, and then we would get a contradiction since for such $i$ we would have
    \[
    \frac{1}{2}E_ih^2 \leq \frac{1}{\|T_i\|(C_\rho(z))}\int_{C_\rho(z)}d(y,\bm{\pi}_i)^2\, d\|T_i\| \leq R_i^2 < \frac{1}{100}E_ih^2
    \]
    (indeed, in this case one would need the graph of $w$ in $C_\rho(z)$ to lie within the $\frac{1}{100}h$-neighborhood of the plane $w_1(z)+\R^2$ which is clearly false). So, we must have $T_{i,j}\neq 0$ for each $i,j$, which proves \eqref{eqn:sheeting-a1}. The estimate \eqref{eqn:sheeting-a2} then follows by ensuring $\beta = \beta(m,\eps)$ is sufficiently small. We also note that one can take $\rho = \rho(w,\beta)$ in this argument, i.e.~$\rho$ is independent of the point $z\in A^2_{1,1/2}$, because one has a uniform lower bound on $w_1-w_2$, as well as an upper bound on $|Dw_i|$, on $A^2_{1,1/2}$.
\end{proof}

We now use Lemma \ref{lem:freq-sheeting} to prove an Allard-type sheeting theorem at infinity for area-minimizers with density $2$ at infinity.

\begin{theorem}[Regularity near infinity]\label{thm:structure-at-infty}
    Suppose $T$ is a non-planar, area-minimizing $2$-current in $\R^{2+m}$ with $\del T = 0$ and $\Theta_T(\infty)=2$. Then, there is $\alpha = \alpha(m)\in (0,1)$ such that, after a possible rotation and dilation in $\R^{2+m}$, one of the following cases occurs:
    \begin{enumerate}
        \item $T\res (\R^{2+m}\setminus \overline{C_1}) = \llbracket \graph_{2\R^2}(v)\setminus \overline{C}_1\rrbracket$, for $v:2\R^2\setminus \overline{B_1}\to \R^m$ a smooth function satisfying $v = O(r^{1-\alpha+})$;
        \item $T\res (\R^{2+m}\setminus \overline{B_1}) = (\llbracket \graph_{P_1}(u_1)\rrbracket + \llbracket \graph_{P_2}(u_2)\rrbracket)\res (\R^{2+m}\setminus \overline{B_1})$, for $P_1,P_2$ transverse $2$-planes and $u_i:P_i\to P_i^\perp$ smooth functions satisfying $u_i = O(r^{1-\alpha+})$.
    \end{enumerate}
\end{theorem}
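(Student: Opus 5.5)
By Theorem \ref{thm:unique}, the unique tangent cone at infinity is $\bC = 2\llbracket P\rrbracket$ or $\bC = \llbracket P_1\rrbracket + \llbracket P_2\rrbracket$ with $P_1,P_2$ transverse. Both cases will use the decay \eqref{eqn:unique-concl1}--\eqref{eqn:unique-concl2} at rate $R^{-\alpha}$, with $\alpha=\alpha(m)$ since $\Theta=2$.

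\emph{Case of two transverse planes.} Rescale $T$ by $R^{-1}$ for $R\gg1$. Away from a neighborhood of the origin, i.e.\ on $A_{2,1/2}$, the rescaled current is close as varifolds to a multiplicity-one plane near each $P_i$. Allard's regularity theorem, applied in balls of fixed size centered on $P_i\cap A_{2,1/2}$, then gives a single-sheeted graph over $P_i$ there, with $C^{1,\gamma}$ norm controlled by the $L^2$ distance to $\bC$, which is $\le cR^{-\alpha/2}$. Patching over dyadic annuli gives $u_i:P_i\setminus B_{R_0}\to P_i^\perp$ with $|u_i|\le cr^{1-\alpha/2}$. Higher derivatives follow from Schauder theory for the minimal surface system. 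Decreasing $\alpha$ and dilating so that $R_0\le 1$ yields conclusion (2). Transversality ensures that the two graphs and the region outside $B_1$ match up.

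\emph{Case $\bC = 2\llbracket P\rrbracket$.} Rotate so that $P=\R^2\times\{0\}$. Because $T$ is non-planar, after translating and dilating we may assume $\Theta_T(0,1/2)\ge 1/2$ and that $\spt\|T\|$ is not a plane. The decay \eqref{eqn:unique-concl2} gives $\rho^{-4}\int_{B_\rho} d_P^2\,d\|T\|\le \eta^2\rho^{-2\alpha}$ for $\rho\ge\rho_0$. Corollary \ref{cor:freq-mono} then gives that $N_{T,P}(\rho)$, times a factor tending to $1$, is non-decreasing on $[\rho_0,\infty)$, so $N_\infty:=\lim_{\rho\to\infty}N_{T,P}(\rho)$ exists. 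Two bounds on $N_\infty$ are needed.
\begin{itemize}
\item Upper bound $N_\infty\le 1-\alpha/2$: by \eqref{eqn:freq-mono-concl2} and Corollary \ref{cor:H-control}, $H$ grows like $\rho^{2N_\infty}$, so $E_{T,P}(\rho)\sim\rho^{2N_\infty-2\pm}$, and this must be compatible with $E\le c\rho^{-2\alpha}$.
\item Lower bound $N_\infty>0$: this is the delicate point. If $N_\infty=0$, the rescalings $T_\rho$ would have blow-ups $w$ that are $0$-homogeneous Dirichlet minimizers. Such a $w$ is constant, and since the $L^2$ normalization is taken relative to $P$ it would equal $2\llbracket c\rrbracket$. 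The plan here is to replace $P$ by the translate $P+c$, or by a tilt, and use uniqueness of the tangent cone to show that $c$ must vanish. Failing that, I would apply Lemma \ref{lem:freq-sheeting} only after subtracting the average, using \eqref{eqn:freq-mono-concl2} to control $H$ from below.
\end{itemize}

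Once $N_\infty\in[\eps,1-\eps]$, choose $\rho_1$ large enough that for every $\rho\ge\rho_1$ the rescaling $(1/\rho)_\#T$ satisfies the hypotheses of Lemma \ref{lem:freq-sheeting}. This requires three inputs:
\begin{itemize}
\item density pinching $|\Theta_T(0,\rho)-2|\le\delta$, from monotonicity;
\item small excess, from \eqref{eqn:unique-concl2};
\item frequency pinching $|N_{T,P}(\rho)-N_\infty|\le\delta$, from the existence of the limit.
\end{itemize}
The lemma then gives $N_\infty=1/2$, and on each annulus $A^2_{\rho,\rho/2}$ a smooth multiplicity-one $2$-valued minimal graph that is $C^2$-close to a $\sqrt r$-profile with linearly independent $a,b$. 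The key consequence is that each annulus is connected and double-covered. The graphs on overlapping annuli agree because both equal $T$, so they glue to a single $v:2\R^2\setminus\overline{B_{\rho_1}}\to\R^m$. The annulus estimates give $|v|\le cE^{1/2}_{T,P}(\rho)\rho\le c\rho^{1-\alpha}$, and also rule out extra mass in the cylinder. After dilating so that $\rho_1=1$, this is conclusion (1).

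The main obstacle is the lower bound $N_\infty>0$, i.e.\ excluding a slowly varying, nearly flat, translating-type degeneration, together with checking that Corollary \ref{cor:freq-mono} applies despite the cylinder-versus-ball and basepoint issues. I would attack it first by using the uniqueness of $\bC$ and the Hardt--Simon-type decay to show that the blow-up cannot be a constant relative to $P$.
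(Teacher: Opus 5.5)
Your overall route is the paper's. In the transverse case you use Allard. In the case $\bC=2\llbracket P\rrbracket$ you use almost-monotonicity of $N_{T,P}$ at infinity (Corollary \ref{cor:freq-mono}), bound the limit from above by comparing the growth of $H_{T,P}$ with the decay \eqref{eqn:unique-concl1}, and then apply Lemma \ref{lem:freq-sheeting} on every large cylinder and glue the annuli.

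The one real problem is the step you single out as the main obstacle, the lower bound $N_\infty>0$. As written you do not prove it, and the mechanism you suggest would not work. A blow-up $2\llbracket c\rrbracket$ with $c\neq 0$ lives at the height scale $E_{T,P}(0,\rho)^{1/2}\rho\ll\rho$. It is therefore fully compatible with the tangent cone at infinity being $2\llbracket P\rrbracket$. For example, take sheets whose heights are dominated by $c\log r+d$: then $D_{T,P}(\rho)\sim\log\rho$ and $H_{T,P}(\rho)\sim(\log\rho)^2$, so $N_{T,P}(\rho)\to 0$ and the normalized blow-ups are constant. Uniqueness of $\bC$ says nothing about such a scenario. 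Re-centering at $P+c$ or tilting also changes the frequency whose monotonicity you are using.

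The missing observation is that nothing delicate is needed. The quantity in \eqref{eqn:cor1-freq-mono-concl1} is non-decreasing for $\rho\geq\rho_0$. It is strictly positive at $\rho_0$, because $T$ is not planar and so $N_{T,P}(\rho_0)>0$. Hence its limit $M$ is bounded below by its (positive) value at $\rho_0$.

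Moreover $\rho^{-2}D_{T,P}(\rho)\leq cE_{T,P}(0,2\rho)\to 0$ by Lemma \ref{lem:tilt-by-L2}, and $\rho^{-2\alpha\gamma}\to 0$. So the exponential correction factor tends to $1$, giving $M=\lim_{\rho\to\infty}N_{T,P}(\rho)=N_\infty>0$. In other words, the degeneration you are worried about is exactly a frequency decreasing to $0$, and almost-monotonicity rules it out directly. This is how the paper argues.

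Separately, handle the possibility $M=\infty$ in your upper-bound step the same way as $M<\infty$. Compare $H_{T,P}(\rho)\geq c\rho^{2M'}$ for some $M'>1-\alpha$ with $H_{T,P}(\rho)\leq c\rho^{2-2\alpha}$ from \eqref{eqn:unique-concl1}. With these two points in place, the rest of your argument goes through as in the paper.
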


\begin{proof}
    Let $\bC = \llbracket P_1\rrbracket + \llbracket P_2\rrbracket$ be the tangent cone of $T$ at infinity; we know by Theorem \ref{thm:unique} that $\bC$ is unique and, there exists $\alpha = \alpha(m)\in (0,1)$ such that for any $\eta>0$ and $\alpha^\prime<\alpha$ we have
    \begin{equation}\label{eqn:structure-1}
        d_H(\spt\|T\|\cap B_R,\spt\|\bC\|\cap B_R)\leq R^{1-\alpha} \leq \eta R^{1-\alpha^\prime} \quad \text{for all }R\gg 1.
    \end{equation}
    We also know that either $P_1 = P_2$ or $P_1\cap P_2=\{0\}$ (as $\bC$ is area-minimizing).

    If $P_1\neq P_2$, then $P_1\cap P_2=\{0\}$, and so conclusion (2) follows from Allard's regularity theorem and \eqref{eqn:structure-1}.  Hence, after rotating we may suppose $P_1 = P_2 = P = \R^2 \times \{0^m\}$. From Corollary \ref{cor:freq-mono} and \eqref{eqn:structure-1}, provided $\eta = \eta(m,\alpha^\prime)$ is sufficiently small (which we may assume), the quantity
    \[
    N_{T,P}(\rho)\exp(c \eta^{2\gamma}\rho^{-2\alpha^\prime\gamma} + c'\eta^{2\gamma-2\gamma_1}(\rho^{-2} D_{T,P}(\rho))^{\gamma_1})
    \]
    is non-decreasing for $\rho\gg 1$ (for suitable constants $c, c'$), and hence has a limit $M\in [0,\infty]$. Since this quantity is $>0$ (it is well-defined as $T$ is not planar) we know $M>0$. Moreover, since by Lemma \ref{lem:tilt-by-L2} and \eqref{eqn:structure-1} we have
    \[
    \rho^{-2}D_{T,P}(\rho) \leq \frac{1}{2}\rho^{-n}\int_{C_\rho}\|\pi_T-\pi_P\|^2\, d\|T\| \leq cE_{T,P}(0,2\rho) \to 0
    \]
    and get that $M = \lim_{\rho\to\infty} N_{T,P}(\rho)$ also.

    We now claim that $M\leq1-\alpha$. To see this, we argue by contradiction: suppose that $M>1-\alpha$. If $M<\infty$, set $M^\prime := (M+1-\alpha)/2>1-\alpha$, otherwise if $M=\infty$, then set $M^\prime:=1$. Then, choosing $\eps = \eps(m,M,M^\prime,\alpha^\prime)$ sufficiently small and $\tau = \tau(\eps,T)$ sufficiently large, from Corollary \ref{cor:H-control} we have, if $M<\infty$
    \begin{equation}\label{eqn:structure-2}
    H_{T,P}(\rho) \geq (\rho/\tau)^{2(M-\eps)-c\eps^{2\gamma}}H_{T,P}(\tau) \geq c(\tau,\eps,M,T)\rho^{2M^\prime} \quad \text{for all }\rho>\tau.
    \end{equation}
    for some constant $c(\tau,\eps,M,T)>0$ (of course we know that $H_{T,P}(\tau)>0$ as $T$ is assumed to be non-planar); if instead $M=\infty$, the same argument works just replacing $M-\eps$ in the first inequality by $2$, say. However, on the other hand, from \eqref{eqn:structure-1} we have, for all $\rho\gg 1$,
    \begin{align*}
        H_{T,P}(\rho) & \leq 2\rho^{1-2}\int_{C_\rho\setminus C_{\rho/2}}\left(\sup_{x\in\spt\|T\|\cap C_{2\rho}}d_P(x)^2\right)|\nabla r|^2\frac{1}{r}\, d\|T\|(x)\\
        & \leq c\rho^{-1}\cdot \rho^{2-2\alpha}\cdot\frac{1}{\rho}\cdot\|T\|(C_\rho)\\
        & \leq c\rho^{2-2\alpha}.
    \end{align*}
    However combining this inequality with \eqref{eqn:structure-2} gives $c\rho^{2M^\prime}\leq \rho^{2-2\alpha}$ for all $\rho\gg 1$, which is a contradiction since $M^\prime>1-\alpha$. Hence, this shows that we must have $M\leq 1-\alpha$.

    Hence, we have now shown that there is an $\eps'>0$ for which $M\in [\eps',1-\eps']$. Since $N_{T,P}(\rho)\to M$, $\Theta_T(0,\rho)\to 2$, and $E_{T,P}(0,\rho)\to 0$ as $\rho\to\infty$, we can therefore apply Lemma \ref{lem:freq-sheeting} on large cylinders to deduce that $T\res (\R^{2+m}\setminus \overline{C_R}) = \llbracket \graph_{2\R^2}(v)\setminus \overline{C_R}\rrbracket$ for some large $R>1$ and some smooth $v:2\R^2\setminus \overline{B_1}\to \R^m$. The asymptotics $v = O(r^{1-\alpha+})$ then follow from \eqref{eqn:structure-1}.
\end{proof}

\section{Subspace rigidity}\label{sec:subspace}

In this section we show that any entire, indecomposable, area-minimizing integral $2$-current $T$ with $\Theta_T(\infty)<\infty$ and is regular at infinity is contained within an affine subspace of dimension $\leq 2\Theta_T(\infty)$. Here, by ``indecomposable'', we mean we cannot write $T = T_1+T_2$, $\|T\| = \|T_1\| + \|T_2\|$ with $T_1,T_2$ non-zero (necessarily area-minimizing) integral $2$-currents without boundary.

Our first step is the following ``Liouville''-type theorem for two-dimensional stationary integral varifolds which are contained within a slab-region. This result is essentially a classical fact about bounded harmonic functions, but for the reader's convenience we give the proof in the context of stationary varifolds.

\begin{lemma}\label{lem:vector-rigidity}
    Let $V$ be a stationary $2$-varifold in $\R^{2+m}$ with $\Theta_V(\infty)<\infty$. If for some unit vector $e$ we have $\sup_{\spt\|V\|}|\<e,x\>|<\infty$, then $\nabla_V\<e,x\> =0$ at $\|V\|$-a.e.~$x$.
\end{lemma}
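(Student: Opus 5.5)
The plan is to run the standard parabolicity (log cutoff) argument for bounded harmonic functions directly in the first variation formula. Set $u(x) := \<e,x\>$, and after translating assume $\sup_{\spt\|V\|}|u| \leq K$. Since $V$ is stationary, the coordinate function is weakly $V$-harmonic: for any Lipschitz compactly supported $\zeta$,
\[
\int \<\nabla_V u, \nabla_V \zeta\>\, d\|V\| = 0,
\]
obtained by testing the first variation with the vector field $X = \zeta e$. Since $\mathrm{div}_V(\zeta e) = \<\nabla_V\zeta, e\> = \<\nabla_V \zeta, \nabla_V u\>$, this identity holds. Applying it with $\zeta = u\varphi^2$ for a cutoff $\varphi$ gives the Caccioppoli identity
\[
\int |\nabla_V u|^2\varphi^2 \, d\|V\| = -2\int u\varphi\<\nabla_V u,\nabla_V\varphi\>\, d\|V\|,
\]
and Cauchy--Schwarz then yields
\[
\int|\nabla_V u|^2\varphi^2 \leq 2K\Big(\int_{\spt\nabla\varphi}|\nabla_Vu|^2\varphi^2\Big)^{1/2}\Big(\int|\nabla_V\varphi|^2\Big)^{1/2}.
\]
Note that $|\nabla_V u|\le 1$.

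The key step is choosing $\varphi$ with $\int|\nabla_V\varphi|^2\,d\|V\|\to 0$, which is where two-dimensionality and $\Theta_V(\infty)<\infty$ enter. Take the logarithmic cutoff $\varphi_R(x) = 1$ for $|x|\le R$, $\varphi_R(x) = 2 - \log|x|/\log R$ for $R\le|x|\le R^2$, and $\varphi_R = 0$ beyond. Then $|\nabla_V\varphi_R|\le (|x|\log R)^{-1}$ on the annulus. Monotonicity gives $\|V\|(B_s)\le \Theta_V(\infty)\pi s^2$ for all $s$, so summing over dyadic annuli $A_{2^{j+1},2^j}$ with $R\le 2^j\le R^2$ gives
\[
\int|\nabla_V\varphi_R|^2\, d\|V\| \le \frac{1}{(\log R)^2}\sum_j \frac{\|V\|(B_{2^{j+1}})}{2^{2j}} \le \frac{c\,\Theta_V(\infty)\log R}{(\log R)^2}\to 0.
\]

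Bootstrapping then finishes the argument. A crude first bound is $\int|\nabla_V u|^2\varphi_R^2 \le 4K^2\int|\nabla_V\varphi_R|^2 \le C$, obtained by absorbing without restricting the domain. This gives $\int|\nabla_V u|^2\, d\|V\| <\infty$ after letting $R\to\infty$ by monotone convergence. Feeding finiteness back in, the tail $\int_{|x|\ge R}|\nabla_V u|^2\to 0$, and the refined inequality (restricted to $\spt\nabla\varphi_R\subset\{|x|\ge R\}$) shows $\int_{B_R}|\nabla_Vu|^2 \le 2K(\text{tail})^{1/2}(\int|\nabla\varphi_R|^2)^{1/2}\to 0$. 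Hence $\nabla_V u=0$ $\|V\|$-a.e. The only delicate point is the logarithmic cutoff estimate, which is exactly the quadratic area growth plus dimension two (parabolicity); the remaining steps are routine. Integrality is not needed, and admissibility of $\zeta e$ as a test field is immediate since $\zeta$ is Lipschitz with compact support.
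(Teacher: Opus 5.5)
Your proposal is correct and is essentially the paper's argument: the paper tests the first variation with $X=\phi^2(|x|)\langle e,x\rangle e$ (your $\zeta e$ with $\zeta=u\varphi^2$), obtains $\int\phi^2|\nabla_V u|^2\le 4\int(\phi')^2u^2$, and uses the same $R$-to-$R^2$ logarithmic cutoff together with the dyadic bound $\|V\|(B_s)\le\pi\Theta_V(\infty)s^2$. Your bootstrap step is harmless but unnecessary: the crude bound $\int_{B_R}|\nabla_V u|^2\le 4K^2\int|\nabla_V\varphi_R|^2\le cK^2\Theta_V(\infty)/\log R$ already tends to zero. Conversely, the bootstrap on its own would let you replace the logarithmic cutoff with an ordinary linear one.
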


\begin{remark}
    Lemma \ref{lem:vector-rigidity} is false without the assumption $\Theta_V(\infty)<\infty$ (see \cite{JX}). It is also false of $V$ has dimension $\geq 3$, as the standard catenoid in $\R^{n+1}$ for $n\geq 3$ is contained within two parallel planes. In the present lemma, one can weaken the growth rate on $|\<e,x\>|$ to sub-logarithmic.
\end{remark}

\begin{proof}
    After a rotation there is no loss in generality in assuming that $e=e_1$, so that $\<e,x\> = x_1$ is the first coordinate. Take $\phi\in C^1_c(\R)$ and choose the vector field $X = \phi^2(|x|)x_1e_1$ in the first variation formula for $V$. Using that $\pi_{T_xV}(e_1) = \nabla x_1|_x$, we obtain
    \[
    \int\phi^2|\nabla x_1|^2\, d\|V\| = -\int2\phi\phi^\prime\big\langle \pi_{T_xV}(x/|x|), e_1\big\rangle x_1\, d\|V\|
    \]
    which therefore gives, by the Cauchy--Schwarz inequality,
    \[
    \int\phi^2|\nabla x_1|^2\, d\|V\| \leq 4\int(\phi^\prime)^2x_1^2\, d\|V\|.
    \]
    Let $\phi(r)$ be a (smooth approximation of) a log-cutoff function at radius $R$, i.e.
    \[
    \phi(r) = \begin{cases}
        1 & \text{if }r\leq R,\\
        1-\frac{\log(r)-\log(R)}{\log(R^2)-\log(R)} & \text{if }R<r<R^2,\\
        0 & \text{if }R^2\leq r.
    \end{cases}
    \]
    Then we get
    \begin{align*}
    \int_{B_R}|\nabla x_1|^2\, d\|V\| & \leq 4\int_{B_{R^2}\setminus B_R}\frac{1}{r^2(\log(R))^2}x_1^2\, d\|V\|\\
    &\leq \frac{4\sup_{\spt\|V\|}x_1^2}{(\log(R))^2}\int_{B_{R^2}\setminus B_R}\frac{1}{r^2}\, d\|V\|\\
    & \leq \frac{32\pi\Theta_V(\infty)\cdot\sup_{\spt\|V\|}|x_1|^2}{\log(R)}
    \end{align*}
    where the last inequality follows from a standard dyadic-scale trick, since $\|V\|(B_r)\leq \Theta_V(\infty)\pi r^2$ for all $r>0$. 
    Taking $R\to\infty$ then gives $\nabla x_1 = 0$ $\|V\|$-a.e., which completes the proof.
\end{proof}

The following is the main result of this section.

\begin{theorem}[Subspace rigidity]\label{thm:subspace-rigidity}
    Suppose $T^2$ is an indecomposable\footnote{It would suffice to assume $\reg\,T$ is connected.} area-minimizing integral $2$-current in $\R^{2+m}$ with $\del T =0$ and which is regular at infinity in the following sense: there are $\alpha>0$, $Q_1,\dotsc,Q_\ell\in \N$, smooth functions $\{v_i:Q_i\R^2\to \R^m\}_{i=1}^\ell$, and rigid motions $O_1,\dotsc,O_\ell\in O(2+m)$, so that
    \begin{equation*}\label{eqn:subspace-hyp}
        T\res (\R^{2+m}\setminus \overline{B}_1) = \sum_{i=1}^\ell (O_i)_\#\llbracket \graph_{Q_i\R^2}(v_i)\rrbracket \res (\R^{2+m}\setminus \overline{B}_1) \quad \text{with }v_i = O(r^{1-\alpha+}).
    \end{equation*}
    Then, there is a subspace $W\subset\R^{2+m}$ of dimension $\leq 2\Theta_T(\infty)$ and $p\in \R^{2+m}$ so that $\spt\|T\| \subset p+W$.
\end{theorem}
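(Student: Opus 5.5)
We first apply Lemma \ref{lem:soft-subspace-rigidity} to each end. For the $i$-th end $O_i\graph_{Q_i\R^2}(v_i)$ we obtain a subspace $W_i\supset O_i(\R^2\times\{0\})$ of dimension $\leq 2Q_i$. In $W_i^\perp$ the end has the form $A_0^i\log r + B_0^i + O_{2,1-}(r^{-1/Q_i+})$, with $A_0^i,B_0^i\in W_i^\perp$. Set $W := W_1+\cdots+W_\ell$. Since $\Theta_T(\infty)=\sum_i Q_i$ (the tangent cone at infinity is $\sum_i Q_i\llbracket O_i\R^2\rrbracket$), we get $\dim W\leq 2\Theta_T(\infty)$. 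Projecting onto $W^\perp\subset W_i^\perp$ shows that on each end the coordinate function $x\mapsto \pi_{W^\perp}(x)$ equals $\pi_{W^\perp}(A_0^i)\log r + \pi_{W^\perp}(B^i_0) + o(1)$. The goal is to show that all the log coefficients $c_i:=\pi_{W^\perp}(A_0^i)$ vanish. Once they do, for every unit $e\in W^\perp$ the function $\<e,x\>$ is bounded on $\spt\|T\|$, since it is bounded on the compact piece in $\overline B_1$ and on each end. Lemma \ref{lem:vector-rigidity} then gives $\nabla_T\<e,x\>=0$ a.e. Indecomposability (or connectedness of $\reg T$) makes $\<e,x\>$ constant on $\spt\|T\|$, so $\spt\|T\|\subset p+W$.

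The first step toward the vanishing is a flux identity that reduces the question to each end being individually balanced. For $e\in W^\perp$, the function $\<e,x\>$ is $T$-harmonic, so the flux of $\nabla_T\<e,x\>$ through $\partial B_R\cap\spt T$ is zero. On the $i$-th end the flux through the large circle is computed from the expansion: it equals $2\pi Q_i\<c_i,e\>+o(1)$, because $\int_{Q_iS^1} r\,\partial_r(\cdot)\,d\theta$ picks up only the log term and the error terms decay. This gives the balancing relation $\sum_i Q_i c_i=0$. That relation alone does not force each $c_i=0$ (for example, two ends with opposite logs), so the area-minimizing property must be used.

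The main step, and the one I expect to be the real obstacle, is the competitor argument. Suppose some $c_i\neq0$. Then the projection of the end to $W^\perp$ is a nonconstant logarithmic drift orthogonal to everything else the end sees. I would construct, on a large annular region $A_{R^2,R}$ of each end, the competitor obtained by composing with the map that replaces $\pi_{W^\perp}$ by its values cut off through a log-cutoff. Concretely, push $T$ forward under $x\mapsto \pi_W(x)+\psi(|x|)\,\pi_{W^\perp}(x)+(1-\psi(|x|))\,\bar p$, where $\psi$ is a log-cutoff and $\bar p$ is a fixed point. This flattens the $W^\perp$ component on the far region and leaves the boundary at scale $R^2$ essentially unchanged, up to a boundary-matching correction. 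The area saved is comparable to the Dirichlet energy $\int|\nabla\pi_{W^\perp}|^2$ over the annulus, which is $\sim 2\pi Q_i|c_i|^2\log R$. The cost comes from the cross terms with the $W$-directions and from the boundary gluing. Orthogonality of $W^\perp$ to the $r^{k/Q}$ modes, together with the $O(r^{-1/Q+})$ error, should make these costs $o(\log R)$.

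A cleaner alternative would be to compare the area of $T$ in $B_{R^2}$ with the area of the projection $\pi_{W\#}T$ (area is nonincreasing under projection) plus a boundary connector. The difference of areas is at least $c|c_i|^2\log R$, while the connector needed to restore the boundary at radius $R^2$ costs only $O(R^2\cdot |c_i|\log R / R^2)$-type lower order terms, since the offset $c_i\log R^2$ is small relative to $R^2$. Either way we contradict minimality, which forces $c_i=0$ for all $i$. The delicate point is arranging the boundary-matching so that its cost is genuinely $o(\log R)$ uniformly over all ends simultaneously. I would handle this by gluing along the cylinder between the two graphs at radius $R^2$, whose area is at most $\sum_i 2\pi Q_i R^2\cdot O(|c_i|\log R)/R\cdot$ a slope factor, and I would check carefully that this is small.
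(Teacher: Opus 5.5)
Your outer structure is the paper's. You apply Lemma~\ref{lem:soft-subspace-rigidity} on each end and set $W=\sum_i W_i$ with $\dim W\le 2\sum_i Q_i=2\Theta_T(\infty)$. You then kill the $W^\perp$ log coefficients $c_i$ by minimality, and finish with Lemma~\ref{lem:vector-rigidity} plus indecomposability. (The flux identity $\sum_i Q_ic_i=0$ is correct but plays no role.)

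The gap is in the competitor, which is the only step with real content. It is not a bookkeeping issue: flattening the $W^\perp$ component to a \emph{fixed} point $\bar p$, or projecting onto $W$, cannot beat $T$. On end $i$ write $s=\log r$ and $L=\log R$, and take $\psi=0$ on $B_R$, $\psi=1$ outside $B_{R^2}$, so the boundary is untouched. Your map makes the $W^\perp$ profile constant for $s\le L$ and leaves it $\approx 2c_iL+B_i$ at $s=2L$. By Cauchy--Schwarz its Dirichlet energy on $A_{R^2,R}$ is then at least $2\pi Q_i|2c_iL+B_i-\bar p|^2/L\approx 8\pi Q_i|c_i|^2L$. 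But $T$ itself has only $\approx 4\pi Q_i|c_i|^2L$ of $W^\perp$ energy in all of $B_{R^2}\setminus B_\rho$.

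The term you did not account for is $\psi'(|x|)(\pi_{W^\perp}x-\bar p)$. It has size $\approx |c_i|/r$ on the annulus, so it costs order $|c_i|^2\log R$, not $o(\log R)$. Moreover, the saving occurs inside $B_R$, not on the annulus as you wrote. Conceptually, $c_i\log r$ is harmonic, so it already essentially minimizes energy among profiles with a fixed inner value and its own value at the outer radius; no choice of cutoff helps.

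Your ``cleaner alternative'' fails for the same reason, and its connector estimate is wrong. A cylinder joining the $Q_i$-fold circle of radius $R^2$ to its translate by $\approx 2|c_i|\log R$ has area $\approx 4\pi Q_i|c_i|R^2\log R$. Any cheaper connector is an interpolation of the type just bounded below. Finally, a single global map with one $\bar p$ cannot accommodate several ends with different $c_i$.

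The missing idea is to \emph{translate} rather than project: flatten each end separately to the value of its own log term at the outer radius. The paper sets $v_{i,0}=v_i-c_i\log r$ and $w_i=\phi(r)v_{i,0}+c_i\log R$, with $\phi$ cutting off on $[\rho,2\rho]$. Then $w_i=v_i$ at $r=R$, so no connector is needed. For $r<\rho$ the graph of $w_i$ is the flat $Q_i$-fold disk $c_i\log R+(P_i\cap B_\rho)$. The compact core of $T$ is discarded and each end is capped independently, which is exactly what allows different ends to be shifted by different amounts.

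The gain is $\pi Q_i|c_i|^2\log(R/\rho)$, from deleting the $|c_i|^2/r^2$ term of $Jv_i$. The cross terms are $O(r^{-2-1/Q+})$ precisely because $c_i\in W^\perp$ and $D\pi_{W^\perp}v_{i,0}=O(r^{-1-1/Q+})$. The cutoff annulus and caps cost at most $c(v_i)(1+\rho^2)$, independent of $R$. Comparing masses in $\bigcap_i C_R(0,P_i)$ then contradicts minimality once $R\gg\rho$.
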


\begin{proof}
    Set $P_i := O_i(\R^2)$, oriented correctly so that the tangent cone of $T$ at infinity is $\bC = \sum^\ell_{i=1}Q_i\llbracket P_i\rrbracket$. We will abuse notation slightly and think of $v_i(r,\theta)$ as functions on $P_i$, i.e.~so that $v_i:Q_iP_i\to P_i^\perp$ and
    \[
    T_i\res (\R^{2+m}\setminus \overline{B_1}) = \sum^\ell_{i=1}\llbracket \graph_{Q_iP_i}(v_i)\rrbracket\res (\R^{2+m}\setminus \overline{B_1}).
    \]
    Write $Q := \max_i Q_i$ and define $\eta:= \min_{P_i \neq P_j} d_H(P_i\cap B_1,P_j\cap B_1)$.

    Noting that $\Theta_T(\infty) = \sum^\ell_{i=1}Q_i$, we can apply Lemma \ref{lem:soft-subspace-rigidity} to each function $v_i$ to obtain a subspace $W$ of dimension $\leq 2\Theta_T(\infty)$ so that, for every $i=1,\dotsc,\ell$,
    \begin{equation}\label{eqn:subspace-0.5}
        \pi^\perp_W(v_i) = A_{i}^\perp\log(r) + B_{i}^\perp + O_{2,1-}(r^{-1/Q+})
    \end{equation}
    for some $A_{i}^\perp,B_{i}^\perp\in W^\perp$.

    We claim that $A_{i}^\perp=0$ for each $i$. To prove this, we will suppose not and find a contradiction to the area-minimizing property of $T$ by constructing a suitable competitor. Define $v_{i,0} = v_i - A_i^\perp \log(r)$.  Then from Lemma \ref{lem:refined-asymptotics} and \eqref{eqn:subspace-0.5} we get
    \begin{align}
    \< D_k v_i , D_l v_i\> &= \< D_k v_{i,0}, D_l v_{i, 0}\> + \frac{|A_i^\perp|^2}{r^2} (D_k r)(D_l r) + O(r^{-1/Q-2+}) \label{eqn:subspace-0.6} \\
    &= O(r^{-2/Q+}) \label{eqn:subspace-0.7}
    \end{align}
    for $k, l \in \{ 1, 2 \}$.
    
    Take $1\ll \rho\ll R$ both large, to be fixed later. Ensure $\rho$ is sufficiently large so that, on $\{r\geq\rho\}$,
    \begin{align}
        \max\{|Dv_i| , |Dv_{i,0}| \} \leq 1/100 \quad \text{ and } \quad \max\{ |v_i|, |v_{i,0}|\} \leq \eta r /100.\label{eqn:subspace-1}
    \end{align}
    Let $\phi:\R\to\R$ be a smooth, increasing function obeying $\phi\equiv 0$ on $(-\infty,\rho]$, $\phi\equiv 1$ on $[2\rho,\infty)$, and $\phi^\prime\leq 2/\rho$ on $\R$. Define $w_i:Q_iP_i\to P_i^\perp$ by
    \[
    w_i(r,\theta):= \begin{cases}
        v_{i,0}(r,\theta)\phi(r) + A_{i}^\perp\log(R) & \text{if }r\geq \rho,\\
        A_{i}^\perp\log(R) & \text{if }r<\rho.
    \end{cases}
    \]
    We have
    \begin{equation}\label{eqn:subspace-2}
        w_i = v_i \quad \text{on }P_i\cap \del B_R,
    \end{equation}
    and
    \begin{equation}
    D_kw_i = D_kv_{i,0} \quad \text{for }r\geq 2\rho,
    \quad\text{and}\quad
    |Dw_i|^2 \leq c(v_i) \quad \text{for }r\in [\rho,2\rho], \label{eqn:subspace-2.1}
    \end{equation}
    where $c(v_i)$ denotes a constant depending on $v_i$ but independent of $\rho,R$.

    Note that $w_i$ coincides with a single-valued constant (with multiplicity $Q_i$) for $r<\rho$, so the currents $\llbracket \graph_{Q_iP_i}(w_i)\rrbracket$ are entirely smooth and without boundary. Moreover, by \eqref{eqn:subspace-1} and \eqref{eqn:subspace-2}, we have
    \[
    \del (T\res \cap^\ell_{i=1}C_R(0,P_i)) = \del\left(\sum^\ell_{i=1}\llbracket \graph_{Q_iP_i}(w_i)\cap C_R(0,P_i) \rrbracket\right)
    \]
    and hence, by minimality of $T$, we must have
    \begin{equation}\label{eqn:subspace-3}
        \|T\|(\cap^\ell_{i=1}C_R(0,P_i)) \leq \sum^\ell_{i=1}\|\graph_{Q_iP_i}(w_i)\|(C_R(0,P_i)).
    \end{equation}
    
On the other hand, using \eqref{eqn:subspace-0.6}, \eqref{eqn:subspace-0.7} with the relation
\[
\sqrt{\det(\mathrm{Id} + G + H)} = \sqrt{\det(\mathrm{Id}+G)} (1+\tr(H)/2+O(|G||H|) + O(|H|^2))
\]
for $2\times 2$ matrices $G, H$ satisfying $\max \{|G|, |H|\} \leq 1/100$, and then using \eqref{eqn:subspace-2.1}, we can compute:
    \begin{align*}
        \|T\|&(\cap ^\ell_{i=1}C_R(0,P_i))\\
        & \geq \sum^\ell_{i=1}\int_{Q_iP_i\cap (B_R\setminus B_\rho)} Jv_i \, dx\\
        & \geq\sum^\ell_{i=1} \int_{Q_iP_i\cap (B_R\setminus B_\rho)}J v_{i,0} + \frac{|A_{i}^\perp|^2}{2r^2} + O(r^{-1/Q-2+})\, dx\\
        & \geq \sum^\ell_{i=1} \left( \int_{Q_iP_i\cap (B_R\setminus B_\rho)} J v_{i,0}\, dx + \pi Q_i\log(R/\rho)|A_{i}^\perp|^2 - c(v_i) \right)  \\
        & \geq\sum^\ell_{i=1} \left( \int_{Q_iP_i\cap (B_R\setminus B_\rho)} Jw_i \, dx + \pi Q_i|A_{i}^\perp|^2\log(R/\rho) - c(v_i)\rho^2 \right)\\
        & \geq \sum^{\ell}_{i=1} \|\graph_{Q_iP_i}(w_i)\|(C_R(0,P_i)) + \sum_{i=1}^\ell \Big( \pi Q_i| A^\perp_{i}|^2\log(R/\rho) - c(v_i) \rho^2 \Big) \\
        & > \sum^\ell_{i=1}\|\graph_{Q_iP_i}(w_i)\|(C_R(0,P_i))
    \end{align*}
    where the last inequality if strict some $A^\perp_i \neq 0$ and $R$ is chosen sufficiently large (depending on the $v_i$ and $\rho$).  This contradicts \eqref{eqn:subspace-3}, and hence proves our claim.
    
    Now, since each $A_{i}^\perp=0$, from the expansion \eqref{eqn:subspace-0.5}, we see that we must have $\sup_{x\in \spt\|T\|}|\<x,e\>|<\infty$ for all $e\in W^\perp$. Hence, from Lemma \ref{lem:vector-rigidity}, we get that $\<x,e\>$ is locally constant on $\spt\|T\|$, and hence is constant by indecomposability of $T$. It therefore follows that $\spt\|T\|\subset p+W$ for some $p\in \R^{2+m}$.
\end{proof}

\section{Proof of Main Theorem}

We can now combine all we have done to prove Theorem \ref{thm:main}.

\begin{proof}[Proof of Theorem \ref{thm:main}]
If $\Theta_T(p) \geq 2$ at any $p$ then since $\Theta_T(\infty) = 2$ the monotonicity formula implies $T = \llbracket p + P_1 \rrbracket + \llbracket p + P_2 \rrbracket$ for either $P_1 = P_2$ or (by the area-minimizing property) $P_1 \cap P_2 = \{0\}$.  In the latter case, $P_1, P_2$ must be jointly holomorphic for some orthogonal complex structure by \cite{Morgan_1982}.

By Allard, let us therefore assume $T$ is entirely regular and multiplicity-one.  If $T$ were decomposable, that is $T = T_1 + T_2$ and $\|T\| = \|T_1\| + \|T_2\|$ for $T_i$ non-zero and $\del T_i = 0$, then each $T_i$ would be area-minimizing and hence we must have $\Theta_{T_i}(\infty) = 1$.  So each $T_i$ is a multiplicity-one plane, and we deduce $T = \llbracket p_1 + P_1 \rrbracket + \llbracket p_2 + P_2 \rrbracket$, where as before either $P_1 = P_2$ or $P_1 \cap P_2 = \{0\}$ are jointly holomorphic up to a real rigid motion.

We may assume $T$ is indecomposable.  By Theorem \ref{thm:structure-at-infty} we know $T$ is regular at infinity, and so we can apply Theorem \ref{thm:subspace-rigidity} to deduce $T$ is supported in some affine copy of $\R^4 \equiv \C^2$.  It then follows from \cite{Edelen_Franco_Minter} that $\spt\|T\|$ is algebraic and, up to a further rigid motion, the zero set of a degree $2$ holomorphic polynomial, which completes the proof.

For the sake of completeness we outline the argument, which simplifies in this special case.  From Micallef's result \cite{micallef} we know $T$ is holomorphic up to rigid motion, and hence by the \cite{Riviere_2004}'s uniqueness of tangent cone at infinity we can extent $\spt \|T\|$ to a closed, complex subvariety of $\CP^2$.  Chow's theorem implies $\spt \|T\|$ must be algebraic, and having density $2$ at infinity forces the degree of the subsequence (irreducible) polynomial to be $2$.
\end{proof}

We also justify the claim in the introduction that if $T^2\subset\R^n$ is a complete area-minimizer asymptotic to $Q\llbracket P\rrbracket$ (so in particular $\Theta_T(\infty)=Q$) and $N_{T,P}(\infty) = 1/Q$, then $T$ must be contained in some affine copy of $\R^4$, and thus is algebraic by \cite{Edelen_Franco_Minter}. Indeed, in this situation one can prove that $T$ is regular at infinity by Remark \ref{rem:freq-sheeting}, at which point the condition $N_T(\infty)=1/Q$ forces the asymptotic expansion from Lemma \ref{lem:refined-asymptotics} to become
\[
v(r,\theta) = ar^{1/Q}\cos(\theta/Q)+br^{1/Q}\sin(\theta/Q) + c\log(r)+d + O(r^{-1/Q}).
\]
The proof from Theorem \ref{thm:subspace-rigidity} then implies that $T$ is contained in an affine copy of $W:= \R^2\oplus \R a\oplus \R b \cong \R^4$.
\qed

\appendix
\section{An $L^2$ estimate}

We prove the following $L^2$ estimate which was used in the proof of Theorem \ref{thm:unique}.

\begin{lemma}\label{lem:L2-by-mono}
    Fix $0<\sigma<\tau$. Let $V$ be a stationary $n$-varifold in $A_{\tau,\sigma/2}\subset\R^{n+m}$, and let $f:\R^{n+m}\setminus\{0\}\to \R$ be a $1$-homogeneous $C^1$ function admitting the bound $\|f\|_{C^1(S^{n+m-1})}\leq M$. Set $\Lambda:= \sup_{\sigma\leq\rho\leq\tau}\rho^{-n}\|V\|(A_{\rho,\rho/2})$. Then we have the inequality
    \begin{align*}
        &\left|\tau^{-n-2}\int_{A_{\tau,\tau/2}}f(x)^2\, d\|V\| - \sigma^{-n-2}\int_{A_{\sigma,\sigma/2}}f(x)^2\, d\|V\|\right|\\
        & \hspace{7em} \leq \int_{A_{\tau,\tau/2}\cup A_{\sigma,\sigma/2}}\frac{|\pi^\perp_V(x)|^2}{|x|^{n+2}}\frac{f(x)^2}{|x|^2}\, d\|V\|\\
        & \hspace{12em}+ 4M^2\Lambda^{1/2}\log(\tau/\sigma)\left(\int_{A_{\tau,\sigma/2}}\frac{|\pi_V^\perp(x)|^2}{|x|^{n+2}}\, d\|V\|\right)^{1/2}.
    \end{align*}
\end{lemma}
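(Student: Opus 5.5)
We plan to prove the estimate by differentiating the scale-invariant weighted quantity in the radius and using the first variation formula. Write $\psi(s)$ for a cutoff adapted to the annulus, so that $A_{\rho,\rho/2}$-type integrals are $\int \psi(|x|/\rho)\,\cdot\,d\|V\|$. (With indicator cutoffs we mollify first and pass to the limit; the boundary terms of that limit are what produce the first term on the right, which is supported on the two end annuli.) Set
\[
I(\rho):=\rho^{-n-2}\int \psi(|x|/\rho) f(x)^2\, d\|V\|.
\]
The key observation is homogeneity. The function $g(x):=|x|^{-n-2}f(x)^2$ is $(-n)$-homogeneous, so $I(\rho)=\int \psi(|x|/\rho)\,\tilde g\, d\|V\|$ with $\tilde g(x)=|x|^{-n-2}f(x)^2$ up to the factor $(|x|/\rho)^{n+2}$, which we absorb into $\psi$ by writing $\tilde\psi(s)=s^{n+2}\psi(s)$. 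Differentiating in $\rho$ then gives
\[
I'(\rho)=-\rho^{-1}\int \frac{|x|}{\rho}\tilde\psi'(|x|/\rho)\, g\, d\|V\|.
\]

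Next we test the first variation formula with the radial field
\[
X(x)=\tilde\psi(|x|/\rho)\, g(x)\, x .
\]
On the varifold, $\mathrm{div}_V x=n$. The term $x\cdot\nabla^V g$ equals $x\cdot\nabla g$ minus its normal part, and Euler's identity gives $x\cdot\nabla g=-ng$. The $n$'s therefore cancel, leaving
\[
\int \tilde\psi\,\langle \pi_V^\perp x,\nabla g\rangle \, d\|V\| .
\]
Similarly, the $\tilde\psi'$ term equals $\frac{|x|}{\rho}\tilde\psi'\,g$ minus $\tilde\psi'\,\frac{|\pi_V^\perp x|^2}{\rho|x|}\,g$. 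Stationarity thus expresses $\rho I'(\rho)$ as the sum of two terms:
\begin{itemize}
\item a term $\int \tilde\psi'\,\frac{|\pi_V^\perp x|^2}{|x|^2}\,\frac{|x|}{\rho}\,g$, which is of the form $|\pi_V^\perp x|^2|x|^{-n-2}f^2|x|^{-2}$ localized near $|x|\sim\rho$;
\item a term $\int \tilde\psi\,\langle\pi_V^\perp x,\nabla g\rangle$.
\end{itemize}

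Since $|\nabla g|\le c M^2|x|^{-n-1}$ by the $C^1$ bound and the homogeneity of $f$, we estimate the second term by Cauchy--Schwarz:
\[
\Big|\int_{A_{\rho,\rho/2}}\tilde\psi\,\langle\pi_V^\perp x,\nabla g\rangle\Big| \le M^2\Big(\int_{A_{\rho,\rho/2}} |x|^{-n}\Big)^{1/2}\Big(\int_{A_{\rho,\rho/2}} |\pi_V^\perp x|^2|x|^{-n-2}\Big)^{1/2}.
\]
The first factor is bounded by $\Lambda^{1/2}$ (up to constants). We then integrate $I'$ from $\sigma$ to $\tau$ against $d\rho/\rho$. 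The $\tilde\psi$-localized normal integrals, summed over scales, each see every point $x$ for a $\rho$-range of $\log$-length $O(1)$. Applying Cauchy--Schwarz in $\rho$ leaves a factor $\log(\tau/\sigma)$ in front, giving the last term. (We would use the cruder bound $\log$ rather than $\log^{1/2}$, matching the stated estimate.)

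\textbf{Main obstacle.} The main difficulty is bookkeeping the first term: the $\tilde\psi'$ normal-squared term must telescope, and only its contribution at the two endpoints should survive as $\int_{A_{\tau,\tau/2}\cup A_{\sigma,\sigma/2}}$. I would first try choosing $\psi$ so that the interior contributions cancel exactly after integrating in $\rho$. Failing that, I would keep them as one-signed terms and bound them by the same normal integral, absorbing them into the $\log$ term, which is acceptable since the constant there is generous ($4M^2$).
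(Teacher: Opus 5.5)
Your argument is the paper's first-variation argument in a different normalization. Your field $X=\tilde\psi(|x|/\rho)\,g(x)\,x$ is $\rho^{-n-2}$ times the paper's $\zeta(|x|/\rho)f^2x$. The difference is that you put the weight into the $(-n)$-homogeneous $g$ and into $\tilde\psi(s)=s^{n+2}\psi(s)$, and integrate $\rho I'(\rho)$ against $d\rho/\rho$.

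In that normalization, what you call the main obstacle is automatic. Since $\frac{d}{d\rho}\big[\tilde\psi(|x|/\rho)\big]=-\rho^{-1}\frac{|x|}{\rho}\tilde\psi'(|x|/\rho)$, the $\tilde\psi'$-term integrates, pointwise in $x$, to
\[
\int \frac{|\pi_V^\perp(x)|^2}{|x|^2}\,g(x)\,\big(\tilde\psi(|x|/\tau)-\tilde\psi(|x|/\sigma)\big)\,d\|V\|.
\]
Because $0\le\tilde\psi\le 1$ and $\tilde\psi$ is supported in $[1/2,1]$, this is bounded by the first term on the right with constant $1$. No special choice of $\psi$ and no absorption are needed, so drop both fallbacks.

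This is where your bookkeeping differs materially from the paper's. The paper keeps $\rho^{-n-2}$ outside and integrates in $d\rho$. Then the $\zeta'$-term carries weight $|x|^{-n-2}$ at $\rho=|x|$ but $(2|x|)^{-n-2}$ at $\rho=2|x|$. It therefore integrates to
\[
\int\frac{|\pi_V^\perp(x)|^2f^2}{|x|^{n+4}}\big(\mathbf{1}_{A_{\tau,\sigma}}-2^{-n-2}\mathbf{1}_{A_{\tau/2,\sigma/2}}\big)\,d\|V\|,
\]
not to the displayed difference, and a bulk term over $A_{\tau/2,\sigma}$ survives. A check: for the line $\{x_2=1\}\subset\R^2$ with $f=|x|$, the left side is $\tfrac12(\tau^{-2}-\sigma^{-2})$ to leading order. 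This matches the corrected identity, while the displayed one gives $4(\tau^{-2}-\sigma^{-2})$. In your normalization, exactly this bulk piece sits inside $\langle\pi_V^\perp(x),Dg\rangle$, through the term $-(n+2)|x|^{-n-4}f^2x$ of $Dg$, where Cauchy--Schwarz controls it. So your route is the one that genuinely justifies the endpoint-only first term.

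The cost is in the second constant, and here your sketch is too loose. The bounds $|Dg|\le (n+4)M^2|x|^{-n-1}$, $\tilde\psi\le 1$ and $\int_{A_{\rho,\rho/2}}|x|^{-n}\,d\|V\|\le 2^n\Lambda$ give only $c(n)M^2$, not $4M^2$. To get the stated constant:

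1. Keep the cutoff. On $A_{\rho,\rho/2}$ use $\tilde\psi(|x|/\rho)|x|^{-n-1}=\rho^{-n-2}|x|$ together with $\int_{A_{\rho,\rho/2}}|x|^2\,d\|V\|\le\Lambda\rho^{n+2}$. This removes the $2^{n/2}$ loss.
2. Use Euler's relation $\langle Df,x\rangle=f$ to write $|x|^{n+2}Dg=f\,(2D_Tf-nf\,x/|x|^2)$, where $D_Tf:=Df-f\,x/|x|^2\perp x$.

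For $n=2$, step 2 gives $|Dg|=2|x|^{-4}|f||Df|=|x|^{-4}|D(f^2)|$. That is exactly the quantity the paper estimates, so you recover its constant. For the use in Theorem \ref{thm:unique}, any dimensional constant would suffice anyway.
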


\begin{proof}
    Take $\zeta:\R\to\R$ a non-negative, smooth function compactly supported in $[1/2,1]$. Fix also $\rho\in [\sigma,\tau]$. Consider the vector field
    \[
    X(x) := \zeta(|x|/\rho)f(x)^2 x.
    \]
    Note that since $f(x)$ is $1$-homogeneous we have $\<x,D(f^2)\>=2f^2$. Let us abbreviate $x^\top \equiv \pi^{\top}_V(x)$ and $x^\perp\equiv \pi_V^{\perp}(x)$. We then compute
    \begin{align*}
        \textnormal{div}_V(X) & = \zeta^\prime(|x|/\rho)\frac{|x^\top|^2}{|x|\rho}f(x)^2 + \zeta(|x|/\rho)\<x,\nabla_V(f^2)\> + n\zeta(|x|/\rho)f(x)^2\\
        & = \zeta^\prime f^2\frac{|x|}{\rho} - \zeta^\prime f^2\frac{|x^\perp|^2}{|x|\rho} + (n+2)\zeta f^2 - \zeta\<x,\nabla_V^\perp(f^2)\>,
    \end{align*}
    and therefore, by stationarity of $V$,
    \begin{align*}
        \frac{d}{d\rho}\left(\rho^{-n-2}\int\zeta(|x|/\rho)f^2\, d\|V\|\right) & = -\rho^{-n-3}\int (n+2)\zeta f^2 + \zeta^\prime f^2\frac{|x|}{\rho}\, d\|V\|\\
        & = -\rho^{-n-3}\int \zeta^\prime f^2\frac{|x^\perp|^2}{|x|\rho} + \zeta\<x,\nabla^\perp_V (f^2)\>\, d\|V\|.
    \end{align*}
    Integrating this over $\rho\in [\sigma,\tau]$, and then taking $\zeta\to \mathbf{1}_{[1/2,1]}$, we get
    \begin{align*}
        &\tau^{-n-2}\int_{A_{\tau,\tau/2}}f^2\, d\|V\| - \sigma^{-n-2}\int_{A_{\sigma,\sigma/2}}f^2\, d\|V\|\\
        & \hspace{7em} = \int_{A_{\tau,\sigma}}\frac{|x^\perp|^2f^2}{|x|^{n+4}}\, d\|V\| - \int_{A_{\tau/2,\sigma/2}}\frac{|x^\perp|^2f^2}{|x|^{n+4}}\, d\|V\|\\
        & \hspace{15em} - \int^\tau_\sigma\rho^{-n-3}\int_{A_{\rho,\rho/2}}\<x,\nabla^\perp_V(f^2)\>\, d\|V\|d\rho.
    \end{align*}
    
    From the $1$-homogeneity of $f$, we have
        \[
        |\<x,\nabla^\perp_V(f^2)\>| = |\<\nabla^\perp_V(x),D(f^2)\>| \leq 4|x^\perp||x|M^2,
        \]
        and so we can bound the last term like
        \begin{align*}
            &\left|\int^\tau_\sigma\rho^{-n-3}\int_{A_{\rho,\rho/2}}\<x,\nabla^\perp_V(f^2)\>\, d\|V\| d\rho\right|\\
            & \leq 4M^2\int^\tau_{\sigma}\rho^{-n-3}\left(\int_{A_{\rho,\rho/2}}|x|^2\, d\|V\|\right)^{1/2}\left(\int_{A_{\rho,\rho/2}}|x^\perp|^2\, d\|V\|\right)^{1/2}\, d\rho\\
            & \leq 4M^2\Lambda^{1/2}\int^\tau_\sigma\rho^{-1}\left(\rho^{-n-2}\int_{A_{\rho,\rho/2}}|x^\perp|^2\, d\|V\|\right)^{1/2}\, d\rho\\
            & \leq 4M^2\Lambda^{1/2}\int^\tau_\sigma\rho^{-1}\left(\int_{A_{\rho,\rho/2}}\frac{|x^\perp|^2}{|x|^{n+2}}\, d\|V\|\right)^{1/2}\, d\rho\\
            & \leq 4M^2\Lambda^{1/2}\log(\tau/\sigma)\left(\int_{A_{\tau,\sigma/2}}\frac{|x^\perp|^2}{|x|^{n+2}}\, d\|V\|\right)^{1/2}.
        \end{align*}
        This completes the proof.
\end{proof}

\section{Monotonicity of frequency}\label{sec:freq-mono-appendix}

In this section we prove almost-monotonicity of Krummel--Wickramasekera's planar frequency function, in the form stated in Theorem \ref{thm:freq-mono}. Our argument is a refinement of that in \cite{KW23a}. The main technical differences are how the graphical scales $\sigma(\xi)$ are chosen, and how the error of $D^\prime$ is handled, which together allow us to show the ``scale invariant'' differential inequality \eqref{eqn:freq-mono-concl1} for general small excess $E$. Throughout this section we fix $P := \R^n\times\{0^m\}$. Also, for a $q$-valued function $w = \sum_{i=1}^q\llbracket w_i\rrbracket:B_1^n\to \cA_q(\R^m)$ and $p\in \R^m$, we define $w\ominus p := \sum^q_{i=1}\llbracket w_i-p\rrbracket$.

\begin{lemma}\label{lem:dir-minz-recenter}
    Given $a>0$, there is $b = b(n,m,q,a)>0$ such that the following holds. Let $w\in W^{1,2}(B^n_1;\cA_q(\R^m))$ be a Dirichlet-minimizing function. Then:
    \begin{align}
       & \|w\|_{L^2(B_{1/2})} \geq a\|w\|_{L^2(B_1)}\nonumber\\
       &\hspace{5em}\Longrightarrow\quad \|w\|_{L^2(B_{1/20}(\xi))}\geq b\|w\|_{L^2(B_1)} \quad \text{for all }\xi\in B_{1/5};\label{eqn:recenter-concl1}
    \end{align}
    and
    \begin{align}
        & \|Dw\|_{L^2(B_{1/2})} \geq a\|Dw\|_{L^2(B_1)}\nonumber\\
        &\hspace{5em}\Longrightarrow\quad \|Dw\|_{L^2(B_{1/20}(\xi))}\geq b\|Dw\|_{L^2(B_1)}\quad \text{for all }\xi\in B_{1/5}.\label{eqn:recenter-concl2}
    \end{align}
\end{lemma}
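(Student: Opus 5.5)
We argue by contradiction and compactness, treating \eqref{eqn:recenter-concl1} and \eqref{eqn:recenter-concl2} in parallel. Both hypotheses and conclusions are invariant under $w\mapsto \lambda w$, so we normalize $\|w\|_{L^2(B_1)}=1$ (respectively $\|Dw\|_{L^2(B_1)}=1$). Suppose the first implication fails for some $a>0$. Then there are Dirichlet-minimizers $w_k$ with $\|w_k\|_{L^2(B_1)}=1$, $\|w_k\|_{L^2(B_{1/2})}\geq a$, and points $\xi_k\in B_{1/5}$ with $\|w_k\|_{L^2(B_{1/20}(\xi_k))}\to 0$. The plan is to extract a limit Dirichlet-minimizer which vanishes on a ball but not on $B_{1/2}$, and to contradict unique continuation.

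\textbf{Compactness.} For a Dirichlet-minimizer, the Caccioppoli inequality $\int_{B_r}|Dw|^2\leq c(r)\int_{B_1}|w|^2$ holds for $r<1$: compare with the competitor obtained by radially shrinking $w$ toward the zero $Q$-point, using an extension as in \cite{DLS11}. This gives uniform $W^{1,2}(B_{3/4})$ bounds. By the compactness theorem for Dirichlet-minimizers (\cite[Proposition 3.20]{DLS11}), after passing to a subsequence $w_k\to w$ strongly in $L^2(B_{3/4})$, with $w$ Dirichlet-minimizing in $B_{3/4}$, and $\xi_k\to\xi\in\overline{B_{1/5}}$. Then $\|w\|_{L^2(B_{1/2})}\geq a>0$, while $w\equiv Q\llbracket 0\rrbracket$ on $B_{1/20}(\xi)$.

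\textbf{Unique continuation.} This is the main obstacle. We need that a Dirichlet-minimizer vanishing identically on an open set vanishes on the connected domain. We would derive it from frequency monotonicity. Take $z\in\partial\{x:w\neq 0 \text{ on any neighborhood of } x\}$ inside $B_{3/4}$. Near such a $z$ there are balls $B_s(z')$, with $z'$ close to $z$, on which $H_{w,z'}$ vanishes on a small radius yet is positive at a larger radius. Almgren's monotonicity gives that $H_{w,z'}(\rho)>0$ for every $\rho>0$ once $w\not\equiv 0$ near $z'$ (recall $N$ is finite and $\log H$ has derivative $2N/\rho$), which is a contradiction. Equivalently, if $w\not\equiv 0$ on $B_{3/4}$, the doubling estimate $H_{w,z'}(\rho_2)\leq(\rho_2/\rho_1)^{2N}H_{w,z'}(\rho_1)$ forbids $H_{w,z'}$ from vanishing at a small scale. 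Since $B_{1/20}(\xi)\subset B_{3/4}$ and $B_{1/2}\subset B_{3/4}$, we conclude $w\equiv 0$ on $B_{3/4}$, contradicting $\|w\|_{L^2(B_{1/2})}\geq a$.

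\textbf{Gradient version.} For \eqref{eqn:recenter-concl2}, first replace $w_k$ by $w_k\ominus p_k$, where $p_k$ is the mean of the average $(w_k)_a$ over $B_1$. This leaves $Dw_k$ unchanged, and by the Poincaré inequality for $Q$-valued functions \cite{DLS11} it gives $\|w_k\ominus p_k\|_{L^2(B_1)}\leq c$. Interior estimates then yield convergence of $Dw_k$ in $L^2(B_{3/4})$, using the energy convergence in the compactness theorem. The limit $w$ is Dirichlet-minimizing with $\|Dw\|_{L^2(B_{1/2})}\geq a$ and $Dw\equiv 0$ on $B_{1/20}(\xi)$. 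So $w$ is constant there, equal to some $\sum\llbracket p_i\rrbracket$.

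The constant need not be $Q\llbracket p\rrbracket$, so we argue locally on the regular set. On $B_{1/20}(\xi)$ the sheets decompose into groups of coinciding values $p_i$. By continuity of $w$ and the analytic structure of Dirichlet-minimizers near points where the values separate, the resulting sub-collections are separately Dirichlet-minimizing on a larger connected open set. Applying the previous step to each group recentred at its common value, $w$ is constant on the whole connected region avoiding a discrete singular set, by Theorem \ref{thm:singular-set-bound} (for $n\ge 3$ the singular set has codimension $2$ and does not disconnect). Therefore $Dw\equiv0$ on $B_{3/4}$, which is a contradiction.
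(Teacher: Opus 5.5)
Your argument for the first implication is fine, and it is essentially the contradiction/compactness argument the paper cites from \cite{KW23a}. Caccioppoli gives $W^{1,2}$ bounds, the limit vanishes on a ball around $\xi$ but not on $B_{1/2}$, and the frequency doubling bound rules out $H_{w,z'}$ vanishing at the first radius where it becomes positive.

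The gap is in the compactness step of the gradient version. You claim that subtracting the single vector $p_k$ (the mean of $(w_k)_a$) and invoking the $Q$-valued Poincar\'e inequality gives $\|w_k\ominus p_k\|_{L^2(B_1)}\leq c$. This is false. The Poincar\'e inequality of \cite{DLS11} only bounds $\int\cG(w,\bar f)^2$ for some $Q$-point $\bar f$, and the average-free part of $w_k$ is not controlled by $\|Dw_k\|_{L^2}$ at all. For example, take $h$ a fixed single-valued harmonic function and $e$ a unit vector, and set $w_k=\llbracket h+ke\rrbracket+\llbracket h-ke\rrbracket$. This is Dirichlet-minimizing, since its average $h$ is harmonic and its average-free part is constant. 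Moreover $\|Dw_k\|_{L^2(B_1)}$ is independent of $k$, yet $\|w_k\ominus p_k\|_{L^2(B_1)}\to\infty$. Your contradiction hypotheses constrain only $Dw_k$, so nothing rules out a sequence whose sheets drift apart like this. You therefore obtain neither the $L^2$ bound nor a limit in $\cA_q(\R^m)$.

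The missing ingredient is the splitting compactness theorem for Dirichlet-minimizers of bounded energy, \cite[Theorem 2.15(1)]{Almgren_2000}, which is exactly what the paper uses. After passing to a subsequence, one writes $w_k=\sum_{i=1}^J w_k^{(i)}\ominus p_k^{(i)}$, where the translations $p_k^{(i)}$ may diverge from one another. Each recentred $w_k^{(i)}$ converges locally uniformly to a Dirichlet-minimizer $g^{(i)}$, with $|Dw_k^{(i)}|\to|Dg^{(i)}|$ in $L^2_{\mathrm{loc}}$. This gives $\sum_i\|Dg^{(i)}\|^2_{L^2(B_{1/2})}\geq a^2$ while $Dg^{(i)}\equiv 0$ on a ball around $\xi$, and unique continuation for each $g^{(i)}$ finishes the proof.

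Your own unique-continuation sketch for $Dw\equiv0$ can serve for that last step. The appeal to the singular set is unnecessary, though. An open--closed argument on the set where $g^{(i)}$ is locally constant suffices: near a boundary point, split by continuity into groups sharing a common value, then apply your first-step result to each recentred group.
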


\begin{proof}
    Implication \eqref{eqn:recenter-concl1} is a straightforward contradiction argument, as in \cite[Lemma 3.7]{KW23a}. For implication \eqref{eqn:recenter-concl2}, we can again argue by contradiction as follows. If the claim were false, then for some fixed $a > 0$ we could find sequences of numbers $b_j \to 0$, points $\xi_j \to \xi \in \overline{B_{1/5}^n}$ and Dirichlet-minimizers $w_j : B_1^n \to \cA_Q(\R^m)$ so that 
    \begin{align}
    &\|Dw_j\|_{L^2(B_1)} = 1 \quad \text{and}\quad \|D w_j\|_{L^2(B_{1/2})} \geq a \label{eqn:dir-minz-1} \\
    &\quad \text{yet} \quad \|Dw_j\|_{L^2(B_{1/40}(\xi))} \leq \|D w_j\|_{L^2(B_{1/20}(\xi_i))} \leq b_j \to 0. \label{eqn:dir-minz-2}
    \end{align}
    
    We may then apply the compactness theorem for Dirichlet-minimizers with unit Dirichlet-energy (see \cite[Theorem 2.15(1)]{Almgren_2000}) to deduce that, up to passing to another subsequence, we can find $J\in\{1,\dotsc,Q\}$, positive integers $k_1,\dotsc,k_J\in\{1,\dotsc,Q\}$ with $k_1+\cdots+k_J = Q$, points $p_{j}^{(1)},\dotsc,p_j^{(J)}\in \R^m$, and Dirichlet-minimizers $g^{(1)},\dotsc,g^{(J)} :B^n_1\to \cA_{k_i}(\R^m)$ such that we have
    \[
    w_j = \sum^J_{i=1}w_j^{(i)} \ominus p_j^{(i)}
    \]
    where $w_j^{(i)}$ are Dirichlet-minimizers such that for every $i = 1, \ldots, J$, we have $w_j^{(i)} \to g^{(i)}$ locally uniformly in $B_1^n$ and $|Dw_j^{(i)}| \to |Dg^{(i)}|$ in $L^2_{\text{loc}}(B_1)$.  In particular,
    \[
    \sum^J_{i=1}\int_{B^n_\sigma(y)}|Dg^{(i)}|^2 = \lim_{j\to\infty}\int_{B^n_{\sigma}(y)}|Dw_j|^2
    \]
    for each ball $B^n_\sigma(y)\subset\subset B^n_1$. Our assumptions \eqref{eqn:dir-minz-1}, \eqref{eqn:dir-minz-2} then give
    \[
    \sum^J_{i=1}\int_{B_{1/2}}|Dg^{(i)}|^2 \geq a\quad \text{yet}\quad \sum^J_{i=1}\int_{B_{1/40}(\xi)}|Dg^{(i)}|^2=0.
    \]
    However, unique continuation (cf.~\cite[Theorem 2.15(2)]{Almgren_2000}) then implies that we must have $\|Dg^{(i)}\|_{L^2(B_1)}=0$ for all $i$, giving the desired contradiction.
\end{proof}

\begin{lemma}\label{lem:H-helper}
    Let $q\in \N$, $a>0$, and let $T$ be an area-minimizing integral $n$-current in $C_8(0)$ satisfying
    \[
    (\del T)\res C_\infty(0) = 0, \quad \pi_{P\#}T = q\llbracket B^n_\infty(0)\rrbracket,\quad E_{T,P}(0,8)\leq\eps^2,
    \]
    \[
    \int_{C_{1/2}(0)}d_P^2\, d\|T\| \geq a^2\int_{C_8(0)}d_P^2\, d\|T\|.
    \]
    Then, provided $\eps = \eps(n,m,q,a)\in (0,1)$ is sufficiently small, there is $b = b(n,m,q,a)>0$ for which
    \[
    \int_{C_{1/20}(\xi)}d_P^2\, d\|T\| \geq b^2\int_{C_8(0)}d_P^2\, d\|T\| \quad \text{for all }\xi\in B^n_{1/5}(0).
    \]
\end{lemma}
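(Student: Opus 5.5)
We argue by contradiction and compactness, transferring the lower bound from the Dirichlet-minimizer statement \eqref{eqn:recenter-concl1} of Lemma \ref{lem:dir-minz-recenter}. Suppose that for some fixed $a>0$ there are sequences $\eps_j\to 0$, $b_j\to 0$, points $\xi_j\in B^n_{1/5}$ (after passing to a subsequence $\xi_j\to\xi\in\overline{B^n_{1/5}}$), and currents $T_j$ satisfying the hypotheses with $\eps_j$. Assume also that
\[
\int_{C_{1/20}(\xi_j)}d_P^2\,d\|T_j\| < b_j^2\int_{C_8}d_P^2\,d\|T_j\|.
\]
Set $E_j:=\int_{C_8}d_P^2\,d\|T_j\|$. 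Then $E_j>0$, since otherwise the hypothesis is vacuous; in fact we may as well take $E_j>0$ from the start.

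\textbf{Blow-up.} Since $\pi_{P\#}T_j=q\llbracket B^n\rrbracket$ and $E_{T_j,P}(0,8)\to 0$, the mass bounds from monotonicity and Lemma \ref{lem:mass-by-L2} give $\cE_{T_j,P}(0,4)\leq cE_j$. So Theorem \ref{thm:Lipschitz-approx}, applied after rescaling (cylinder of radius $4$), yields Lipschitz approximations $u_j:B^n_2\to\cA_q(\R^m)$ agreeing with $T_j$ off a set of measure and mass $\leq cE_j^{1+\gamma}$, with $\sup|u_j|\leq cE_j^{1/2}$ by Lemma \ref{lem:Linfty-by-L2}. Exactly as in the proof of Lemma \ref{lem:freq-sheeting}, via \cite[Theorem 4.2]{DeLellis_Spadaro_2014}, the normalized functions $E_j^{-1/2}u_j$ converge strongly in $L^2_{\rm loc}(B^n_2)$ to a Dirichlet-minimizer $w$.

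\textbf{Transfer of $L^2$ quantities.} For any cylinder $C_s(y)\subset C_{3/2}$, the bad set contributes at most $cE_j^{1+\gamma}\cdot\sup d_P^2/E_j\to 0$ after normalization, because $d_P\leq cE_j^{1/2}$ there. Hence
\[
E_j^{-1}\int_{C_s(y)}d_P^2\,d\|T_j\| \to \int_{B_s(y)}|w|^2 .
\]
Here $|w|^2$ denotes $\sum_i|w_i|^2$, which is exactly how the multigraph measures $d_P^2$, since the Jacobian is $1+o(1)$ on the good set because $\mathrm{Lip}(u_j)\to 0$. Applied to $C_{1/2}$, the hypothesis gives $\int_{B_{1/2}}|w|^2\geq a^2$. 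In the other direction, $\int_{B_1}|w|^2\leq \limsup_j E_j^{-1}\int_{C_1}d_P^2\,d\|T_j\|\leq 1$. Also $C_{1/20}(\xi_j)\supset C_{1/40}(\xi)$ for large $j$, so $\int_{B_{1/40}(\xi)}|w|^2=0$.

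\textbf{Conclusion and the main obstacle.} Lemma \ref{lem:dir-minz-recenter}, in the form of \eqref{eqn:recenter-concl1}, is stated with radius $1/20$; to get a contradiction I either use it with radius-$1/40$ constants or, more simply, redo its contradiction argument with $1/40$. In either case $w\equiv0$ on a ball, unique continuation gives $w\equiv0$, and this contradicts $\int_{B_{1/2}}|w|^2\geq a^2$.

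The main obstacle is the convergence of normalized height integrals over cylinders to $\int|w|^2$. The concern is that mass lying outside the Lipschitz graph could carry a non-negligible share of the height. The control I will use is the pointwise bound $d_P\leq cE_j^{1/2}$ on $C_2$ together with the $E_j^{1+\gamma}$ smallness of the bad mass, and this requires the $L^\infty$ estimate on slightly larger cylinders, which the radius $8$ provides. A secondary point is to check the Dirichlet-minimality of the limit via the cited compactness theorem, and that the Lipschitz approximation applies given only $L^2$ height smallness; the latter is handled through Lemma \ref{lem:mass-by-L2}.
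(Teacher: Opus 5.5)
Your proposal is correct and is essentially the paper's argument. The paper first gets the $L^\infty$ bound on $C_7$ and the mass-excess bound on $C_6$ from Lemmas \ref{lem:Linfty-by-L2} and \ref{lem:mass-by-L2}, then applies Almgren's Lipschitz approximation and runs the same blow-up/contradiction argument as \cite[Lemma 3.8]{KW23a}, using Lemma \ref{lem:dir-minz-recenter} (equivalently, unique continuation) on the limiting Dirichlet-minimizer $w$. Your concern about radius $1/40$ versus $1/20$ is harmless: unique continuation already forces $w\equiv 0$ once $w$ vanishes on any ball.
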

\begin{proof}
    By Lemma \ref{lem:mass-by-L2} and Lemma \ref{lem:Linfty-by-L2} we have
    \[
    \sup_{x\in \spt\|T\|\cap C_7(0)} d_P(x)\leq c\eps \quad \text{and}\quad \cE_{T,P}(0,6)\leq c\eps^2.
    \]
    From these, ensuring $\eps = \eps(n,m,q)$ is sufficiently small, we can apply Almgren's Lipschitz approximation (Theorem \ref{thm:Lipschitz-approx}) to approximate $T\res C_1$ by a $q$-valued Lipschitz graph. The proof then proceeds verbatim as in \cite[Lemma 3.8]{KW23a}, with the help of Lemma \ref{lem:dir-minz-recenter}.
\end{proof}

\begin{lemma}\label{lem:H-error}
    Let $T$ be an area-minimizing integral $n$-current in $C_{2\rho}(0)$. Suppose that
    \begin{equation}\label{eqn:H-error-hyp1}
        (\del T)\res C_{2\rho}(0) = 0, \quad \Theta_T(0,\rho/2)\geq 1/2, \quad \Theta_T(0,2\rho) \leq \theta,
    \end{equation}
    \begin{equation*}
        E_{T,P}(0,2\rho) \leq E \leq \delta^2.
    \end{equation*}
    Then, provided $\delta = \delta(n,m,\theta)\in (0,1)$ is sufficiently small we have
    \begin{equation}\label{eqn:H-error-concl1}
        -\rho^{1-n}\int d_P^2\|\pi_T-\pi_P\|^2\frac{1}{r}\phi^\prime(r,\rho)\, d\|T\| \leq cE^\gamma H_{T,P}(\rho),
    \end{equation}
    and therefore
    \begin{equation}\label{eqn:H-error-concl2}
        \left|H^\prime_{T,P}(\rho)+2\rho^{-n}\int\<x,\pi_{P^\perp}(\nabla^Tr)\>\phi^\prime(r/\rho)\, d\|T\|\right| \leq cE^\gamma \rho^{-1}H_{T,P}(\rho),
    \end{equation}
    and
    \begin{equation}\label{eqn:H-error-concl3}
        H_{T,P}(\rho) \leq 2\rho^{1-n}\int_{C_\rho\setminus C_{\rho/2}}d_P^2\frac{1}{r}\, d\|T\| \leq (1+cE^{\gamma})H_{T,P}(\rho).
    \end{equation}
    Here, $c = c(n,m,\theta)>0$ and $\gamma = \gamma(n,m,\theta)>0$.
\end{lemma}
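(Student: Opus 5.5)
We normalize $\rho=1$ by scaling; all three quantities in the statement are scale-invariant. On $C_1\setminus C_{1/2}$ we have $-\phi^\prime(r)=2$, so
\[
H_{T,P}(1)=2\int_{C_1\setminus C_{1/2}}d_P^2|\nabla r|^2\tfrac1r\,d\|T\|.
\]

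\textbf{Reduction to \eqref{eqn:H-error-concl1}.} The first inequality in \eqref{eqn:H-error-concl3} is immediate from $|\nabla r|\le 1$. For the second, we use $|\nabla r|^2\ge 1-c\|\pi_T-\pi_P\|^2$. Combined with \eqref{eqn:H-error-concl1} this gives $(1-cE^\gamma)\cdot 2\int d_P^2 r^{-1}\le H$, and \eqref{eqn:H-error-concl3} follows. For \eqref{eqn:H-error-concl2}, we write $H$ via the coarea structure, differentiate in $\rho$, and use the first variation formula with a vector field of the form $X=d_P^2\,\psi(r/\rho)\,\pi_P(x)/r$, with $\psi$ a primitive of $\phi^\prime$. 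This converts the $\phi^{\prime\prime}$-type term into the stated term $-2\rho^{-n}\int\langle x,\pi_{P^\perp}(\nabla^T r)\rangle\phi^\prime$, plus error terms. Every error term carries either a factor $d_P^2(1-|\nabla r|^2)$ or a factor $d_P^2\|\pi_T-\pi_P\|^2$ supported on the annulus, so \eqref{eqn:H-error-concl1} bounds all of them by $cE^\gamma H$. Terms with only one tilt factor are first handled by Cauchy--Schwarz, pairing $d_P^2\|\pi_T-\pi_P\|^2$ against $d_P^2$. So everything reduces to \eqref{eqn:H-error-concl1}.

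\textbf{Proof of \eqref{eqn:H-error-concl1}: localization.} A global estimate cannot work, since $H$ may be much smaller than $E$. We therefore localize at adapted scales. Lemma~\ref{lem:Linfty-by-L2} gives $\sup_{C_{3/2}\cap\spt\|T\|}d_P\le cE^{1/2}$. For each $\xi$ in the projected annulus $A^n_{1,1/2}$ we choose a scale $\sigma(\xi)\in(0,1/16]$. It is the largest dyadic scale at which the doubling-type condition
\[
\int_{C_{\sigma/2}(\xi)}d_P^2\ge a^2\int_{C_{8\sigma}(\xi)}d_P^2
\]
holds, for a fixed $a=a(n,m,\theta)$. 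If no such scale exists above a threshold, the height $L^2$ mass decays geometrically into $\xi$ and the contribution near $\xi$ is negligible.

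At such a scale we rescale $C_{8\sigma}(\xi)$ to $C_8$. We apply Lemma~\ref{lem:zero-mass}, using the density bounds $\Theta_T(0,1/2)\ge1/2$ and $\Theta_T(0,2)\le\theta$, to get $\pi_{P\#}T=q\llbracket B\rrbracket$. Theorem~\ref{thm:Lipschitz-approx} then gives a Lipschitz approximation $u$ with $\Lip u\le cE_\sigma^{\gamma}$ and bad set of measure $\le cE_\sigma^{1+\gamma}$, where $E_\sigma$ is the height excess at that scale.
\begin{itemize}
\item On the good set, $d_P^2\|\pi_T-\pi_P\|^2\le cE_\sigma^{2\gamma}d_P^2$.
\item On the bad set, $d_P^2\le\sup d_P^2\le c\sigma^{-n}\int_{C_{8\sigma}(\xi)}d_P^2$, and the measure bound gives a factor $E_\sigma^{\gamma}$.
\end{itemize}
Lemma~\ref{lem:H-helper} converts $\int_{C_{8\sigma}(\xi)}d_P^2$ back into $\int_{C_{\sigma/20}(\xi^\prime)}d_P^2$ for nearby centres. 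This yields
\[
\int_{C_{\sigma}(\xi)}d_P^2\|\pi_T-\pi_P\|^2\le cE^{\gamma}\int_{C_{\sigma/20}(\xi)}d_P^2.
\]
A Besicovitch/Vitali covering of the annulus by the cylinders $C_{\sigma(\xi)/20}(\xi)$ then sums these local estimates. The slight enlargement of the annulus produced by the covering is absorbed into $H$ via the cutoff $\phi$ and \eqref{eqn:H-error-concl3}-type comparisons.

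\textbf{Main obstacle.} The hard step is ensuring that the rescaled excess $E_\sigma\lesssim\sigma^{-2}E$ is small enough, uniformly in $\xi$, to apply Lemmas~\ref{lem:zero-mass} and~\ref{lem:H-helper} and Theorem~\ref{thm:Lipschitz-approx}, while still getting a gain of $E^{\gamma}$. I would first restrict to scales $\sigma\ge E^{(1-\gamma^\prime)/2}$, so that $E_\sigma\le E^{\gamma^\prime}$. At the threshold scale $\sigma\sim E^{(1-\gamma^\prime)/2}$ I would instead use the cruder tilt bound of Lemma~\ref{lem:tilt-by-L2}. This gives $\int_{C_\sigma}\|\pi_T-\pi_P\|^2\le c\sigma^{-2}\int_{C_{2\sigma}}d_P^2$, and with $d_P^2\le cE$ the contribution is at most $cE\sigma^{-2}\int d_P^2\le cE^{\gamma^\prime}\int d_P^2$. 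The delicate point is choosing the stopping rule for $\sigma(\xi)$ so that the doubling hypothesis of Lemma~\ref{lem:H-helper} holds exactly at the chosen scale while the threshold case is still covered; this is where the refinement over \cite{KW23a} is needed.
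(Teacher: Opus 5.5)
\textbf{There is a genuine gap in the proof of the first estimate.} Your overall architecture matches the paper's: localize at stopping scales over the annulus, use Almgren's Lipschitz approximation to gain a factor $E^\gamma$, use Lemma~\ref{lem:H-helper} to transfer the height at the stopping scale to a small sub-cylinder, and sum over a Vitali family. But the step you flag as ``the delicate point'' is the actual content of the lemma, and the threshold workaround you propose fails.

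At the threshold scale $\sigma_*\sim E^{(1-\gamma')/2}$ there is, by construction, no doubling. So the bound $cE^{\gamma'}\int_{C_{2\sigma_*}(\xi)}d_P^2$ cannot be charged to $H_{T,P}$: nothing bounds $\int_{C_{2\sigma_*}(\xi)}d_P^2$ by the height on a sub-cylinder of the annulus. As you note yourself, $H_{T,P}$ can be much smaller than $E$. So smallness measured against $E$ is useless, including the ``negligible'' contribution you assign when no doubling scale exists above the threshold.

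For the same reason, the claim that the enlargement of the annulus produced by the covering is ``absorbed into $H$'' is unjustified. The integral of $d_P^2$ over a neighbourhood of the annulus is not controlled by $H$ without a doubling inequality. The paper avoids this by placing the sub-cylinder from Lemma~\ref{lem:H-helper} inside the annulus, $B^n_{\sigma_i/10}(y_i)\subset B^n_{2\sigma_i/5}(\xi_i)\cap A^n_{\rho,\rho/2}$, where $-\phi'(r/\rho)=2$. It then sums using disjointness of the balls $B^n_{2\sigma_i/5}(\xi_i)$.

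\textbf{The missing idea is a stopping rule that gives doubling and small excess at the same time, with no threshold.} The paper takes
\[
\bar\sigma(\xi)=\sup\Big(\{0\}\cup\big\{\sigma:\ E_{T,P}(\xi,\sigma)\ge\chi^2(\sigma/\rho)^{2\eps}E\big\}\Big),\qquad \chi^2=64^{n+8},
\]
which forces $\bar\sigma\le\rho/32$. This rule has three consequences.
\begin{itemize}
\item For $\sigma\ge\bar\sigma(\xi)$ the excess is below $\chi^2(\sigma/\rho)^{2\eps}E$. So the Lipschitz approximation applies with $\Lip(u_i)\le c(\sigma_i/\rho)^{\gamma\eps}E^\gamma$.
\item At $\sigma_i=\bar\sigma(\xi_i)$ one has $E_{T,P}(\xi_i,\sigma_i)\ge\chi^2(\sigma_i/\rho)^{2\eps}E\ge\frac1{16}E_{T,P}(\xi_i,16\sigma_i)$. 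This is exactly the doubling hypothesis of Lemma~\ref{lem:H-helper}.
\item If $\bar\sigma(\xi)=0$, then $E_{T,P}(\xi,\sigma)\to0$ as $\sigma\to0$. By Lemma~\ref{lem:Linfty-by-L2}, $\spt\|T\|$ over $\xi$ then lies in $P$ and contributes nothing.
\end{itemize}

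Your own dyadic doubling rule would also work, but only with $a^2<16^{-(n+2)}$ and no threshold. Failure of doubling at every dyadic scale above $\sigma(\xi)$ then reads $E_{T,P}(\xi,\sigma/2)<16^{n+2}a^2\,E_{T,P}(\xi,8\sigma)$. So the excess decreases geometrically as the scale goes down, giving $E_{T,P}(\xi,s)\le cE$ for all $s\ge\sigma(\xi)$; the feared $E_\sigma\lesssim\sigma^{-2}E$ never arises. If no doubling scale exists, the same decay forces $T\subset P$ over $\xi$.
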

\begin{proof}
    We follow \cite[Lemma 3.9]{KW23a}. From Lemma \ref{lem:mass-by-L2} and Lemma \ref{lem:Linfty-by-L2} we have
    \begin{equation}\label{eqn:H-error-0.6}
        \cE_{T,P}(0,\rho)\leq c(n,m)E \quad \text{and} \quad \sup_{x\in \spt\|T\|\cap C_\rho(0)}d_P(x) \leq c(n,m)E^{1/2}\rho.
    \end{equation}
    From Lemma \ref{lem:zero-mass}, provided $\delta = \delta(n,m,\theta)$ is sufficiently small, we have
    \begin{equation}\label{eqn:H-error-0.5}
        \pi_{P\#}T = q\llbracket B^n_{2\rho}(0)\rrbracket
    \end{equation}
    for some integer $|q|\leq\theta$. If $q=0$ then by \eqref{eqn:H-error-0.6} and Lemma \ref{lem:zero-mass} we must have $T\res C_\rho(0)=0$, but this contradicts our lower density assumption from \eqref{eqn:H-error-hyp1}. So, after possibly flipping the orientation of $T$ as necessary, we may assume $1\leq q\leq\theta$. Note that \eqref{eqn:H-error-0.5} implies that
    \begin{equation}\label{eqn:H-error-1}
        \pi_{P\#}(T\res C_\sigma(\xi)) = q\llbracket B^n_\sigma(\pi_P(\xi))\rrbracket
    \end{equation}
    for any $C_\sigma(\xi)\subset C_{2\rho}(0)$.

    Now fix any $\eps\in (0,1/2)$. For $\xi\in A^n_{\rho,\rho/2}$, define
    \[
    \bar{\sigma}(\xi):= \sup\left(\{0\}\cup\left\{\sigma^{-n-2}\int_{C_{\sigma}(\xi)}d_P^2\, d\|T\| \geq \chi^2(\sigma/\rho)^{2\eps} E\right\}\right),
    \]
    for $\chi^2:= 64^{n+8}$. If $\sigma \in [\rho/32,\rho/2]$, then we can estimate
    \begin{align*}
        \int_{C_\sigma(\xi)}d_P^2\, d\|T\| \leq \int_{C_{2\rho}(0)}d_P^2\, d\|T\| \leq E(2\rho)^{n+2} & \leq 64^{n+4}(\sigma/\rho)^{2\eps}E \sigma^{n+2}\\
        & < \chi^2(\sigma/\rho)^{2\eps}E\sigma^{n+2}
    \end{align*}
    where the last inequality is strict by our choice of $\chi$. This shows us that we must have $\bar{\sigma}(\xi)\leq\rho/32$ for all $\xi\in A^n_{\rho,\rho/2}$.

    For $\xi \in A^n_{\rho,\rho/2}$ and definition of $\bar{\sigma}(\xi)$ we have, by Lemma \ref{lem:Linfty-by-L2},
    \[
    \sup_{x\in \spt\|T\|\cap C_{16\sigma}(\xi)}d_P(x) \leq c(n)\chi(\sigma/\rho)^\eps E^{1/2}\cdot\sigma \quad \text{for all }\sigma\in [\bar{\sigma}(\xi),\rho/32],
    \]
    which in particular implies, if setting $\Xi:= \{\xi\in A^n_{\rho,\rho/2}:\bar{\sigma}(\xi)>0\}$, that $\spt\|T\|\cap ((A^n_{\rho,\rho/2}\setminus\Xi)\times\R^m)\subset P$.

    Now choose a Vitali collection $\{\xi_i\}_i\subset\Xi$ so that, writing $\sigma_i:= \bar{\sigma}(\xi_i)$, the balls $\{B_{2\sigma_i/5}(\xi_i)\}_i$ are disjoint and the balls $\{B_{2\sigma_i}(\xi_i)\}_i$ cover $\Xi$. We then have from Lemma \ref{lem:mass-by-L2} that
    \[
    \cE_{T,P}(\xi_i,8\sigma_i) \leq c(n,m)E_{T,P}(\xi_i,16\sigma_i) \leq 16 \chi^2(\sigma_i/\rho)^{2\eps}E,
    \]
    and therefore in view of \eqref{eqn:H-error-1}, provided $\delta = \delta(n,m,\theta)$ is sufficiently small, we can apply Almgren's Lipschitz approximation (Theorem \ref{thm:Lipschitz-approx}) to find Lipschitz functions $u_i:B^n_{2\sigma_i}(\xi_i) \to \cA_q(\R^m)$ and subsets $K_i\subset B^n_{2\sigma_i}(\xi_i)$ satisfying
    \begin{equation}\label{eqn:H-error-lip1}
        \Lip(u_i) \leq c(\sigma_i/\rho)^{\gamma\eps} E^\gamma;
    \end{equation}
    \begin{equation}\label{eqn:H-error-lip2}
        T\res (K_i\times \R^m) = \mathbf{v}(u_i) \res (K_i\times\R^m);
    \end{equation}
    \begin{equation}\label{eqn:H-error-lip3}
        |B^n_{2\sigma_i}(\xi_i)\setminus K_i| + \|T\|((B^n_{2\sigma_i}(\xi_i)\setminus K_i)\times\R^m) \leq c(\sigma_i/\rho)^{2\eps(1+\gamma)}E^{(1+\gamma)}\sigma_i^n.
    \end{equation}
    Here, $c = c(n,m,\theta)$ and $\gamma = \gamma(n,m,\theta)>0$. We can then estimate
    \begin{align*}
        \int_{C_{2\sigma_i}(\xi_i)}d_P^2&\|\pi_T-\pi_P\|^2\, d\|T\|\\
        &\leq c\int_{K_i\cap B^n_{2\sigma_i}(\xi_i)}|u_i|^2|Du_i|^2\, dx + c\sigma_i^{n+2}(\sigma_i/\rho)^{2\eps(2+\gamma)}E^{2+\gamma}\\
        & \leq c(\sigma_i/\rho)^{2\gamma\eps}E^{\gamma}\int_{K_i\cap B^n_{2\sigma_i}(\xi_i)}|u_i|^2\, dx + c\sigma_i^{n+2}(\sigma_i/\rho)^{2\eps(2+\gamma)}E^{2+\gamma}\\
        & \leq c(\sigma_i/\rho)^{2\gamma\eps}E^{\gamma}\int_{C_{2\sigma_i}(\xi_i)}d_P^2\, d\|T\| + c\sigma_i^{n+2}(\sigma_i/\rho)^{2\eps(2+\gamma)}E^{2+\gamma}\\
        & \leq c(n,m,\theta)\sigma_i^{n+2}(\sigma_i/\rho)^{2\eps(1+\gamma)}E^{1+\gamma}.
    \end{align*}
    Noting that by definition of $\bar{\sigma}$ we have
    \[
    E_{T,P}(\xi_i,\sigma_i) \geq \chi^2(\sigma_i/\rho)^{2\eps}E \geq \frac{1}{16}E_{T,P}(\xi_i,16\sigma_i),
    \]
    and noting that since $\xi_i\in C_\rho\setminus C_{\rho/2}$ we can find $B^n_{\sigma_i/10}(y_i)\subset B^n_{2\sigma_i/5}(\xi_i)\cap A^n_{\rho,\rho/2}$ and then apply Lemma \ref{lem:H-helper} at scale $C_{2\sigma_i}(\xi_i)$ to deduce
    \begin{align*}
        -\rho\int_{C_{2\sigma_i/5}(\xi_i)}d_P^2\frac{1}{r}\phi^\prime(r/\rho)\, d\|T\| & \geq \int_{C_{\sigma_i/10}(y_i)}d_P^2\, d\|T\|\\
        & \geq \frac{1}{c}\int_{C_{16\sigma_i}(\xi_i)}d_P^2\, d\|T\|\\
        & \geq\frac{1}{c}\int_{C_{\sigma_i}(\xi_i)}d_P^2\, d\|T\|\\
        & \geq \frac{1}{c}\sigma_i^{n+2}(\sigma_i/\rho)^{2\eps}E
    \end{align*}
    for $c = c(n,m,\theta)$. We then compute
    \begin{align*}
        -\rho\int d_P^2\|\pi_T-\pi_P\|^2&\frac{1}{r}\phi^\prime(r/\rho)\, d\|T\|\\
        & \leq 4\sum_i\int_{C_{2\sigma_i}(\xi_i)}d_P^2\|\pi_T-\pi_P\|^2\, d\|T\|\\
        & \leq c\sum_i\sigma_i^{n+2}(\sigma_i/\rho)^{2\eps(1+\gamma)}E^{1+\gamma}\\
        & \leq c\sum_i\left(-\rho\int_{C_{2\sigma_i/5}(\xi_i)}d_P^2\frac{1}{r}\phi^\prime(r/\rho)\, d\|T\|\right)\cdot (\sigma_i/\rho)^{2\eps\gamma}E^{\gamma}\\
        & \leq c(n,m,\theta)E^{\gamma}\left(-\rho\int d_P^2\frac{1}{r}\phi^\prime(r/\rho)\, d\|T\|\right).
    \end{align*}
    Combining the above estimate with $|\nabla r|^2\geq 1-\|\pi_T-\pi_P\|^2$, and choosing $\delta = \delta(n,m,\theta)$ sufficiently small, we get
    \begin{equation}\label{eqn:H-error-2}
    -\rho^{1-n}\int d_P^2\frac{1}{r}\phi^\prime(r/\rho)\, d\|T\| \leq (1+cE^{\gamma})H_{T,P}(\rho) \leq 2H_{T,P}(\rho),
    \end{equation}
    which again when combined with the previous estimate implies \eqref{eqn:H-error-concl1}. The proof of \eqref{eqn:H-error-concl2} then follows as in \cite[Lemma 3.9]{KW23a}. Finally, the first inequality in \eqref{eqn:H-error-concl3} is immediate from the definition of $H_{T,P}(\rho)$, whilst the second follows from \eqref{eqn:H-error-2}.
\end{proof}

\begin{lemma}\label{lem:D-helper}
    Let $q\in \N$, $a>0$, and $T$ be an area-minimizing integral $n$-current in $C_8(0)$ satisfying
    \begin{equation}\label{eqn:D-helper-hyp1}
        (\del T)\res C_8(0) = 0, \quad \pi_{P\#}T = q\llbracket B^n_\infty(0)\rrbracket,\quad \cE_{T,P}(0,8)\leq \eps^2,
    \end{equation}
    \[
    \int_{C_{1/2}(0)}|\vec{T}-\vec{P}|^2\, d\|T\| \geq a^2\int_{C_8(0)}|\vec{T}-\vec{P}|^2\, d\|T\|.
    \]
    Then, provided $\eps = \eps(n,m,q,a)\in (0,1)$ is sufficiently small, there is $b = b(n,m,q,a)>0$ so that
    \[
    \int_{C_{1/20}(\xi)}\|\pi_T-\pi_P\|^2\, d\|T\| \geq b^2\int_{C_8(0)}|\vec{T}-\vec{P}|^2\, d\|T\| \quad \text{for all }\xi\in B^n_{1/5}(0).
    \]
\end{lemma}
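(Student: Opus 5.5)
The plan is to argue by contradiction, exactly parallel to Lemma \ref{lem:H-helper}, but now using the Dirichlet-energy version \eqref{eqn:recenter-concl2} of Lemma \ref{lem:dir-minz-recenter}. Suppose the statement fails for some fixed $q, a$. Then there are $\eps_j \to 0$, currents $T_j$ satisfying \eqref{eqn:D-helper-hyp1} with $\eps_j$, points $\xi_j \to \xi \in \overline{B^n_{1/5}}$, and $b_j \to 0$ such that
\[
\int_{C_{1/20}(\xi_j)}\|\pi_{T_j}-\pi_P\|^2\, d\|T_j\| \leq b_j^2\, \cE_j, \qquad \cE_j := \int_{C_8}|\vec{T}_j-\vec P|^2\, d\|T_j\| .
\]
If $\cE_j = 0$, then $T_j = q\llbracket P\rrbracket$ and there is nothing to prove, so assume $\cE_j>0$. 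Since $\cE_{T_j,P}(0,8)\le \eps_j^2$ and $\pi_{P\#}T_j = q\llbracket B^n\rrbracket$, Theorem \ref{thm:Lipschitz-approx}, applied on cylinders of radius slightly less than $8$ after rescaling, gives Lipschitz approximations $u_j : B^n_{s} \to \cA_q(\R^m)$ for every fixed $s<8$ (in practice $s=4$ suffices). These satisfy $\Lip(u_j) \le c\cE_j^\gamma$, and they agree with $T_j$ outside a bad set whose measure and $\|T_j\|$-mass are $\le c\cE_j^{1+\gamma}$. After subtracting the average value $\bar u_j$ of $u_j$ on $B_4$, the normalisations $\cE_j^{-1/2}(u_j \ominus \bar u_j)$ are bounded in $W^{1,2}(B_4)$.

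The key input is the standard "harmonic approximation / compactness" step: by \cite[Theorem 4.2]{DeLellis_Spadaro_2014} (as in the proof of Lemma \ref{lem:freq-sheeting}), a subsequence converges strongly in $L^2_{loc}$ to a Dirichlet-minimiser $w$, and moreover the energies converge locally,
\[
\cE_j^{-1}\int_{B_\sigma(y)}|Du_j|^2 \to \int_{B_\sigma(y)}|Dw|^2 .
\]
Here the precise constant from $|\vec T - \vec P|^2 \approx |Du|^2$ and $\|\pi_T - \pi_P\|^2 \approx 2|Du|^2$ must be tracked. Using the pointwise comparisons between $|\vec T-\vec P|^2$, $\|\pi_T-\pi_P\|^2$ and $|Du|^2$ on the good set (both are $|Du|^2$ up to factors $1+O(\Lip^2)$), and the bad-set bound $c\cE_j^{1+\gamma}$, the hypotheses pass to the limit. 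This gives
\[
\int_{B_{1/2}}|Dw|^2 \ge c\,a^2 \limsup_j \cE_j^{-1}\int_{C_8}|\vec T_j-\vec P|^2 \ge c\,a^2\int_{B_1}|Dw|^2 ,
\]
together with $\int_{B_{1/40}(\xi)}|Dw|^2 = 0$.

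One issue to handle is that the full mass $\cE_j$ lives on $C_8$ while convergence is only local. To deal with this, normalise by $\cE_j$ and note that the hypothesis forces $\int_{B_{1/2}}|Dw|^2 \ge c a^2 > 0$ in the limit, so $w$ is non-constant. Meanwhile the energy on $B_1$ (or on $B_4$) is bounded above by $1$ in these units. Thus the ratio hypothesis of \eqref{eqn:recenter-concl2} holds for $w$ on $B_1$, with constant $ca$. Then \eqref{eqn:recenter-concl2}, or directly the unique continuation used in its proof, contradicts $\int_{B_{1/40}(\xi)}|Dw|^2=0$.

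The main obstacle I expect is justifying strong energy convergence, i.e.\ that no Dirichlet energy of $\cE_j^{-1/2}u_j$ is lost in the limit, so that the lower bound $\ge a^2$ on $C_{1/2}$ actually transfers to $w$. This is precisely where area-minimality enters, through the strong approximation in \cite[Theorem 4.2]{DeLellis_Spadaro_2014} and \cite[Theorem 2.19]{Almgren_2000}. I would invoke it as a black box, exactly as in the proof of Lemma \ref{lem:freq-sheeting}, and otherwise follow \cite[Lemma 3.8]{KW23a} verbatim.
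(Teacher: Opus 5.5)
Your outline (contradiction, Lipschitz approximation, harmonic approximation, then \eqref{eqn:recenter-concl2}) is the route the paper intends, but the compactness step fails as you wrote it. You claim that $\cE_j^{-1/2}(u_j\ominus\bar u_j)$ is bounded in $W^{1,2}(B_4)$ once you subtract a single average vector $\bar u_j$. That does not follow. For $q$-valued maps, Poincar\'e only gives $\int\cG(u_j,\bar u_j)^2\le c\int|Du_j|^2$ for some $\bar u_j\in\cA_q(\R^m)$. The points of $\bar u_j$ may be arbitrarily far apart relative to $\cE_j^{1/2}$, and subtracting one vector cannot remove that spread. The hypotheses bound only the tilt excess and give no control of the $L^2$ height excess in terms of it. This is exactly the paper's remark that closeness to $q\llbracket B^n_1\rrbracket$ in $L^2$ height is not needed.

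For example, take $q=2$ and $t\to 0$, and let $T$ be the sum of the complex lines $\{w=tz+1\}$ and $\{w=-tz-1\}$ in $\C^2$. This $T$ is area-minimizing and satisfies all the hypotheses (for $a\le 1/16$), with tilt excess of order $t^2$. Its sheets, however, stay at distance about $2$. So $\cE^{-1/2}(u\ominus p)$ has $L^2$ norm of order $t^{-1}$ for every vector $p$. In general, then, there is no single $L^2_{loc}$-limit $w$. The analogy with Lemma \ref{lem:freq-sheeting} breaks down: there one normalized by the height excess and had $\sup|u_i|\le cE_i^{1/2}$. Relatedly, $\cE_j=0$ only gives a sum of parallel planes, not $q\llbracket P\rrbracket$, though the conclusion is trivial in that case anyway.

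The missing ingredient is a way to handle sheets that separate at scales $\gg\cE_j^{1/2}$. There are two repairs.

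\begin{enumerate}
\item Use Almgren's concentration-compactness, normalizing by energy rather than by $L^2$ norm. This splits the maps into clusters translated by vectors $p_j^{(i)}$ with diverging mutual distances, as the paper does inside the proof of \eqref{eqn:recenter-concl2}. You must then also check that each cluster limit is Dirichlet-minimizing.
\item More simply, avoid a limit altogether. \cite[Theorem 4.2]{DeLellis_Spadaro_2014} needs only small tilt excess and gives, for each $j$, a Dirichlet-minimizer $w_j$ on $B_2$ with $\int_{B_2}(|Du_j|-|Dw_j|)^2\le\eta_j\cE_j$ and $\eta_j\to0$. Both energy inequalities then transfer to $w_j$ up to $o(\cE_j)$ errors. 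Now apply \eqref{eqn:recenter-concl2} to $w_j$ itself. That statement is invariant under rescaling $w$ and under a common translation of its values, and needs no $L^2$ normalization. This gives the contradiction, and in fact a direct proof.
\end{enumerate}

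The second repair is how the paper combines Lemma \ref{lem:dir-minz-recenter} with the Lipschitz approximation.
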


\begin{proof}
    The proof is the same as in \cite[Lemma 3.10]{KW23a}, using Lemma \ref{lem:dir-minz-recenter} and Almgren's Lipschitz approximation (Theorem \ref{thm:Lipschitz-approx}). Note that one does not require $T$ to be close to $q\llbracket B^n_1(0)\rrbracket$ in $L^2$ height to apply Theorem \ref{thm:Lipschitz-approx}, it suffices to assume \eqref{eqn:D-helper-hyp1}.
\end{proof}

\begin{lemma}\label{lem:D-error}
Let $T$ be an area-minimizing integral $n$-current in $C_{2\rho}(0)$ with:
\[
(\del T)\res C_{2\rho}=0, \quad \Theta_T(0,\rho/2)\geq 1/2,\quad \Theta_T(0,2\rho)\leq\theta,
\]
\[
E_{T,P}(0,2\rho)\leq E \leq \delta^2.
\]
Then, provided $\delta = \delta(n,m,\theta)$ is sufficiently small, we can find positive constants $c^\prime<c^{\prime\prime}<c$ and $\gamma_1<\gamma$ depending only on $n,m,\theta$ such that
\begin{align}
    &\rho^{1-n}\int_{C_\rho(0)}|\vec{T}-\vec{P}|^4\, d\|T\| \label{eqn:D-error-concl1}\\
    & \leq c^\prime E^{\gamma-\gamma_1} D_{T,P}(\rho)\frac{d}{d\rho}\left[(\rho^{-2}D_{T,P}(\rho))^{\gamma_1}\right] + cE^{\gamma}\rho^{-1}D_{T,P}(\rho),\nonumber
\end{align}
and, subsequently,
\begin{align}
    &\left|D^\prime_{T,P}(\rho) + 2\rho^{-n}\int\frac{|\pi_{P^\perp}(\nabla^T r)|^2}{|\nabla^T r|^2} r\phi^\prime(r/\rho)\, d\|T\|\right| \label{eqn:D-error-concl2} \\
    & \hspace{1em} \leq c^{\prime\prime}E^{\gamma-\gamma_1}D_{T,P}(\rho)\frac{d}{d\rho}\left[(\rho^{-2}D_{T,P}(\rho))^{\gamma_1}\right] + cE^{\gamma}\rho^{-1}D_{T,P}(\rho).\nonumber
\end{align}
\end{lemma}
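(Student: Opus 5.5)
The plan is to follow the scheme of Lemma \ref{lem:H-error}, using a stopping-time covering of $A^n_{\rho,\rho/2}$, but now with the tilt playing the role of the height. This mirrors \cite[Lemma 3.11]{KW23a}; the difference is that the stopping scale is adapted to $D_{T,P}$ instead of a fixed decay rate. The derivative term in \eqref{eqn:D-error-concl1} should come from the portion of $C_\rho$ where the tilt is concentrated at scales comparable to $\rho$.

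\textbf{Setup.} As in Lemma \ref{lem:H-error}, Lemmas \ref{lem:zero-mass}, \ref{lem:mass-by-L2} and \ref{lem:Linfty-by-L2} give $\pi_{P\#}T = q\llbracket B^n_{2\rho}\rrbracket$ with $1\le q\le\theta$, together with $\cE_{T,P}(0,\rho)\le cE$.

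\textbf{Stopping-time covering.} For $\xi$ in the relevant region, define a stopping scale
\[
\bar\sigma(\xi):=\sup\Big\{\sigma:\ \sigma^{-n}\int_{C_\sigma(\xi)}|\vec T-\vec P|^2\,d\|T\|\ \ge\ \chi^2(\sigma/\rho)^{2\eps}\,\max\{E^{\kappa},\ \rho^{-2}D_{T,P}(\rho)\}\Big\},
\]
with $\kappa<1$ to be chosen. Take a Vitali subcover $\{C_{2\sigma_i}(\xi_i)\}$. On each cylinder, Almgren's Lipschitz approximation (Theorem \ref{thm:Lipschitz-approx}) and the usual higher-integrability and improvement estimates give
\[
\int_{C_{2\sigma_i}(\xi_i)}|\vec T-\vec P|^4 \le c\,\big((\sigma_i/\rho)^{\eps}E^{\kappa}\big)^{\gamma}\int_{C_{2\sigma_i}(\xi_i)}|\vec T-\vec P|^2 .
\]
Lemma \ref{lem:D-helper} then transfers the right-hand side into a weighted tilt integral located near $\xi_i$.

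\textbf{Case split on the stopping scales.} If every stopping scale is small, or the excess threshold $E^\kappa$ dominates, the contribution is bounded by $cE^{\gamma}\rho^{-1}D_{T,P}(\rho)$, after shrinking $\gamma$. This is the last term in \eqref{eqn:D-error-concl1}; summation over the Vitali cover works as in Lemma \ref{lem:H-error}. The remaining case is where $\rho^{-2}D_{T,P}(\rho)$ sets the threshold and tilt concentrates in the annulus $\{\rho/2<r<\rho\}$. There the weight $\phi'$ sees the tilt. The identity
\[
\frac{d}{d\rho}\big(\rho^{-2}D\big)=\rho^{-2}\Big(D'-\tfrac{(2-n+2)}{\rho}D\Big),\qquad D'\approx -\tfrac12\rho^{1-n}\int\|\pi_T-\pi_P\|^2\tfrac{r}{\rho^2}\phi'\ \ge 0\text{ piece},
\]
lets us bound $\int|\vec T-\vec P|^4$ over the annulus by $c(\rho^{-2}D)^{\gamma}\cdot\rho^{n-1}\cdot(\text{annular tilt})$. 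Writing $(\rho^{-2}D)^{\gamma}= (\rho^{-2}D)^{\gamma-\gamma_1}(\rho^{-2}D)^{\gamma_1}$ and using $\rho^{-2}D\le cE$ (Lemma \ref{lem:tilt-by-L2}) gives the factor $E^{\gamma-\gamma_1}$. Since $\frac{d}{d\rho}(X^{\gamma_1})=\gamma_1X^{\gamma_1-1}X'$, the annular tilt times $(\rho^{-2}D)^{\gamma_1}$ is of order $D\cdot\frac{d}{d\rho}[(\rho^{-2}D)^{\gamma_1}]$ plus an error of size $cE^\gamma\rho^{-1}D$. This yields \eqref{eqn:D-error-concl1}.

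\textbf{Deriving \eqref{eqn:D-error-concl2}.} Differentiate $D_{T,P}$ and use the first variation with $X=\phi(r/\rho)\,\pi_{P^\perp}(x)$-type fields, as in \cite[Lemma 3.11]{KW23a}. This gives \eqref{eqn:D-error-concl2} up to errors bounded by $\rho^{1-n}\int|\vec T-\vec P|^4|\phi'|$ and by $\rho^{-n}\int|\vec T-\vec P|^4$ in $C_\rho$. These errors are then controlled by \eqref{eqn:D-error-concl1}.

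\textbf{Main obstacle.} The hard step is the case split. The quartic tilt near the outer boundary cannot be absorbed by $E^\gamma\rho^{-1}D$ when $D$ is tiny compared with $E$. Splitting off the derivative of $(\rho^{-2}D)^{\gamma_1}$ is what resolves this. I would first try to choose $\kappa$ and $\gamma_1$ so that the two thresholds match, and then verify that the exponents close.
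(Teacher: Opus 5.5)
You have the setup, the Vitali/Lipschitz machinery, Lemma \ref{lem:D-helper}, and the closing algebra right; the last of these pairs $(\rho^{-2}D)^{\gamma_1}$ with $D'$ to get $D\frac{d}{d\rho}[(\rho^{-2}D)^{\gamma_1}]$ plus $cE^\gamma\rho^{-1}D$, as in the paper. You also correctly see that $D'$ is needed because quartic tilt near $r=\rho$ is invisible to the weight $\phi$.

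The gap is the step that should place the factor $(\rho^{-2}D)^{\gamma_1}$ in front of the \emph{unweighted} tilt. Your stopping rule cannot produce it. Since $\rho^{-2}D_{T,P}(\rho)\le cE$ and $\kappa<1$, for $\delta$ small you have $\max\{E^\kappa,\rho^{-2}D_{T,P}(\rho)\}=E^\kappa$ identically. So the branch ``$\rho^{-2}D$ sets the threshold'' never occurs, and the Lipschitz approximation only ever gives smallness in powers of $E$ and $\sigma_i/\rho$.

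Your other branch is false as stated. Summing over the cover bounds $\int_{C_\rho}|\vec T-\vec P|^4$ by a small factor times $\int_{C_\rho}\|\pi_T-\pi_P\|^2$ with no weight, and this is not $\le c\rho^{n-2}D$. Already for $u=r^N\cos(N\theta)$ the ratio of unweighted to $\phi$-weighted Dirichlet energy on $B_\rho$ is of order $N$. Near $r=\rho$ you only know $\phi(r/\rho)\gtrsim\sigma_i/\rho$ on the $i$-th cylinder. That costs a factor $\rho/\sigma_i$, which the Lipschitz gain $(\sigma_i/\rho)^{2\eps\gamma}$ cannot pay, so small stopping scales make matters worse, not better.

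Conversely, if you let $D$ genuinely enter the threshold (e.g.\ $\kappa>1$), you lose $\bar\sigma\le\rho/32$. The reason is that $\rho^{-2}D$ does not control the tilt in $C_{2\rho}$, so the doubling needed to invoke Lemma \ref{lem:D-helper} at the stopping scale fails. Note also that you must cover all of $B^n_\rho$, not just $A^n_{\rho,\rho/2}$, since the left side of \eqref{eqn:D-error-concl1} is an integral over $C_\rho(0)$.

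The missing idea is that $D$ enters \emph{a posteriori}, not in the stopping rule. Keep the threshold $\chi^2(\sigma/\rho)^{2\eps}E$ for $\xi\in B^n_\rho$; this gives $\bar\sigma\le\rho/32$ and the doubling. Each stopping cylinder then forces a lower bound on $D$. Choose $B^n_{\sigma_i/5}(y_i)\subset B^n_{2\sigma_i/5}(\xi_i)\cap B^n_\rho$. By Lemma \ref{lem:D-helper}, $C_{\sigma_i/10}(y_i)$ carries tilt $\ge c^{-1}(\sigma_i/\rho)^{2\eps}E\sigma_i^n$, and there $\phi(r/\rho)\ge\sigma_i/(20\rho)$. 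Hence $\rho^{-2}D_{T,P}(\rho)\ge c^{-1}(\sigma_i/\rho)^{n+1+2\eps}E$.

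This converts the Lipschitz gain $(\sigma_i/\rho)^{2\eps\gamma}$ into $c(\rho^{-2}D/E)^{\gamma_1}$, with $\gamma_1=2\eps\gamma/(n+1+2\eps)$, uniformly in $i$. So, with no case split,
\[
\int_{C_\rho}|\vec T-\vec P|^4\,d\|T\|\le c\,(\rho^{-2}D_{T,P}(\rho))^{\gamma_1}E^{\gamma-\gamma_1}\int_{C_\rho}\|\pi_T-\pi_P\|^2\,d\|T\|.
\]
Then use $\frac12\int_{C_\rho}\|\pi_T-\pi_P\|^2\le(n-1)\rho^{n-2}D+\rho^{n-1}D'$, which holds because $\phi(s)-s\phi'(s)\ge1$ on $[0,1]$. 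This feeds directly into your algebra.

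Two minor corrections. First, $\frac{d}{d\rho}(\rho^{-2}D)=\rho^{-2}(D'-2\rho^{-1}D)$. Second, the first variation behind \eqref{eqn:D-error-concl2} is the domain variation $X=\phi(r/\rho)\pi_P(x)$, not $\phi(r/\rho)\pi_{P^\perp}(x)$; the latter is the one relating $D$ to $H'$.
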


\begin{proof}
    As in the proof of Lemma \ref{lem:H-error}, we can assume
    \begin{equation}\label{eqn:D-error-1}
        \pi_{P\#}T = q\llbracket B^n_{2\rho}(0)\rrbracket
    \end{equation}
    for some integer $1\leq q\leq\theta$. From Lemma \ref{lem:mass-by-L2} we have
    \[
    \cE_{T,P}(0,\rho) \equiv \frac{1}{2}\rho^{-n}\int_{C_\rho(0)}|\vec{T}-\vec{P}|^2\, d\|T\| \leq c(n,m)E.
    \]
    Fix any $\eps\in (0,1/2)$ and set $\chi^2:= 64^{n+8}$. For each $\xi\in B^n_\rho$, define
    \[
    \bar{\sigma}(\xi):= \sup\left(\{0\}\cup \left\{\sigma\in (0,\rho/2]:\sigma^{-n}\int_{C_{\sigma}(\xi)}|\vec{T}-\vec{P}|^2\, d\|T\| \geq \chi^2(\sigma/\rho)^{2\eps}E\right\}\right).
    \]
    Just as in Lemma \ref{lem:H-error}, our choice of $\chi$ gives $\bar{\sigma}(\xi)\leq\rho/32$ for all $\xi\in B_\rho^n$.

    Set $\Xi:= \{\xi\in B^n_\rho:\bar{\sigma}(\xi)>0\}$. Noting that for any $\xi\in B^n_\rho$ we have, by definition of $\bar{\sigma}(\xi)$,
    \begin{equation}\label{eqn:D-error-2}
        \cE_{T,P}(\xi,\sigma)\leq c(n)(\sigma/\rho)^{2\eps}E \quad \text{for all }\sigma\in [\bar{\sigma}(\xi),\rho/2],
    \end{equation}
    we see that in particular $\vec{T}(x) = \vec{P}$ for $\|T\|$-a.e.~$x\in (B^n_\rho\setminus \Xi)\times\R^m$.

    Choose a Vitali collection $\{\xi_i\}_i\subset\Xi$ so that, writing $\sigma_i := \bar{\sigma}_i(\xi_i)$, the balls $\{B_{2\sigma_i}(\xi_i)\}_i$ form a cover of $\Xi$ and the balls $\{B_{2\sigma_i/5}(\xi_i)\}_i$ are disjoint. From \eqref{eqn:D-error-2} and \eqref{eqn:D-error-1}, we can apply Almgren's Lipschitz approximation (Theorem \ref{thm:Lipschitz-approx}) in each cylinder $C_{2\sigma_i}(\xi_i)$ to obtain Lipschitz functions $u_i:B^n_{2\sigma_i}(\xi_i)\to \cA_q(\R^m)$ and subsets $K_i\subset B^n_{2\sigma_i}(\xi_i)$ satisfying the same estimates as in \eqref{eqn:H-error-lip1}, \eqref{eqn:H-error-lip2}, \eqref{eqn:H-error-lip3}.

    We now estimate
    \begin{align*}
        \int_{C_{2\sigma_i}(\xi_i)}&|\vec{T}-\vec{P}|^4\, d\|T\|\\
        &\leq c\int_{K_i\cap B^n_{2\sigma_i}(\xi_i)}|Du_i|^4\, dx + c\sigma_i^n(\sigma_i/\rho)^{2\eps(1+\gamma)}E^{1+\gamma}\\
        & \leq c(\sigma_i/\rho)^{2\eps\gamma}E^{\gamma}\int_{K_i\cap B^n_{2\sigma_i}(\xi_i)}|Du_i|^2\, dx + c\sigma_i^n(\sigma_i/\rho)^{2\eps(1+\gamma)}E^{1+\gamma}\\
        & \leq c\sigma_i^n(\sigma_i/\rho)^{2\eps(1+\gamma)}E^{1+\gamma}.
    \end{align*}
    Since $\xi_i\in B^n_\rho$, we can choose $B^n_{\sigma_i/5}(y_i)\subset B^n_{2\sigma_i/5}(\xi_i)\cap B_\rho^n$, and then on $B^n_{\sigma_i/10}(y_i)$ we have $\phi(r/\rho)\geq \sigma_i/(20\rho)$. Using Lemma \ref{lem:D-helper} in a similar manner to the proof of Lemma \ref{lem:H-error}, we deduce
    \begin{align*}
        (\sigma_i/(20\rho))^{-1}\int_{C_\rho(0)}\|\pi_T-\pi_P\|^2\phi(r/\rho)\, d\|T\| & \geq \int_{C_{\sigma_i/10}(y_i)}\|\pi_T-\pi_P\|^2\, d\|T\|\\
        & \geq \frac{1}{c}\int_{C_{16\sigma_i}(\xi_i)}|\vec{T}-\vec{P}|^2\, d\|T\|\\
        & \geq\frac{1}{c}(\sigma_i/\rho)^{2\eps}E\sigma_i^n
    \end{align*}
    for suitable $c = c(n,m,\theta)$. In particular, for each $i$ we have
    \[
    \rho^{-2}D_{T,P}(\rho)\geq \frac{1}{c}(\sigma_i/\rho)^{n+1+2\eps}E.
    \]
    We then estimate
    \begin{align}
        \int_{C_\rho(0)}&|\vec{T}-\vec{P}|^4\, d\|T\|\nonumber\\
        & \leq \sum_i \int_{C_{2\sigma_i}(\xi_i)}|\vec{T}-\vec{P}|^4\, d\|T\|\nonumber\\
        & \leq c\sum_i\sigma_i^n(\sigma_i/\rho)^{2\eps(1+\gamma)}E^{1+\gamma}\nonumber\\
        & \leq c\sum_i (\sigma_i/\rho)^{2\eps\gamma}E^{\gamma}\int_{C_{2\sigma_i/5}(\xi_i)\cap C_\rho(0)}\|\pi_T-\pi_P\|^2\, d\|T\|\nonumber\\
        & =  c\sum_i\big((\sigma_i/\rho)^{n+1+2\eps}E\big)^{\gamma_1}E^{2\gamma-2\gamma_1}\int_{C_{2\sigma_i/5}(\xi_i)\cap C_\rho(0)}\|\pi_T-\pi_P\|^2\, d\|T\|\nonumber\\
        & \leq c(\rho^{-2}D_{T,P}(\rho))^{\gamma_1}E^{\gamma-\gamma_1}\int_{C_\rho(0)}\|\pi_T-\pi_P\|^2\, d\|T\|,\label{eqn:D-error-estimate-1}
    \end{align}
    where $\gamma_1:= \frac{2\eps\gamma}{n+1+2\eps} < \gamma$ and $c = c(n,m,\theta)>0$.

    Now we have (by definition of $\phi$)
    \begin{align*}
        \frac{1}{2}\int_{C_\rho(0)}\|\pi_T-\pi_P\|^2\, d\|T\| & \leq \frac{1}{2}\int_{C_\rho(0)}\|\pi_T-\pi_P\|^2\big(\phi(r/\rho)-(r/\rho)\phi^\prime(r/\rho)\big)\, d\|T\|\\
        & = (n-1)\rho^{n-2}D_{T,P}(\rho) + \rho^{n-1}D_{T,P}^\prime(\rho),
    \end{align*}
    and also note that by Lemma \ref{lem:tilt-by-L2} (and Lemma \ref{lem:Linfty-by-L2} to pass from balls to cylinders) we have
    \[
    \rho^{-2}D_{T,P}(\rho) \leq cE,
    \]
    and so combining everything (writing $D := D_{T,P}(\rho)$ as a shorthand), we get
    \begin{align*}
        \rho^{1-n}\int_{C_\rho(0)}|\vec{T}-\vec{P}|^4\, d\|T\| & \leq c_1 E^{\gamma-\gamma_1}(\rho^{-2}D)^{\gamma_1}\big((n-1)\rho^{-1}D+D^\prime\big)\\
        & = c_1E^{\gamma-\gamma_1}\rho^{-2\gamma_1}D^{\gamma_1}\big(-2\rho^{-1}D+D^\prime\big)\\
        & \hspace{3em} + c_1 E^{\gamma-\gamma_1}(\rho^{-2}D)^{\gamma_1}(n+1)\rho^{-1}D\\
        & \leq \frac{c_1}{\gamma_1}E^{\gamma-\gamma_1}\rho^{-2\gamma_1}D^{\gamma_1}\big(-2\gamma_1 \rho^{-1}D + \gamma_1 D^\prime\big)\\
        & \hspace{3em} + cE^{\gamma}\rho^{-1}D\\
        & = c^\prime E^{\gamma-\gamma_1} D\frac{d}{d\rho}\big[(\rho^{-2}D)^{\gamma_1}\big] + cE^{\gamma}\rho^{-1}D
    \end{align*}
    This proves \eqref{eqn:D-error-concl1}. The proof of \eqref{eqn:D-error-concl2} follows from \eqref{eqn:D-error-concl1} in the same fashion as in \cite[Lemma 3.11]{KW23a}.
\end{proof}

\begin{proof}[Proof of Theorem \ref{thm:freq-mono}]
    Combine Lemma \ref{lem:H-error} and Lemma \ref{lem:D-error} as in \cite[Theorem 3.4]{KW23a} to get:
    \begin{align*}
        \frac{d}{d\rho}\log N_{T,P}(\rho) & = \frac{H_{T,P}(\rho)D^\prime_{T,P}(\rho) - H^\prime_{T,P}(\rho)D_{T,P}(\rho)}{H_{T,P}(\rho)D_{T,P}(\rho)}\\
        & \geq -c\rho^{-1}E^{\gamma}-c^\prime E^{\gamma-\gamma_1}\frac{d}{d\rho}\big[(\rho^{-2}D_{T,P}(\rho))^{\gamma_1}\big]. \qedhere
    \end{align*}
\end{proof}

Finally, we prove Corollary \ref{cor:H-control}.

\begin{proof}[Proof of Corollary \ref{cor:H-control}]
    Conclusion \eqref{eqn:H-control-concl1} follows from \eqref{eqn:freq-mono-concl2}. Indeed, we get
    \[
    (2N_1-c\eps^{2\gamma})\rho^{-1}\leq \frac{d}{d\rho}\log H_{T,P}(\rho) \leq (c\eps^{2\gamma} + 2N_2)\rho^{-1}
    \]
    and so integrating gives \eqref{eqn:H-control-concl1}. As for \eqref{eqn:H-control-concl2}, we follow \cite[Corollary 4.3]{KW23a}. Let $k$ be the largest integer $\leq \log_2(\rho_2/\rho_1)$. We then compute (using \eqref{eqn:H-error-concl3})
    \begin{align*}
        \int_{C_{\rho_2}}d_P^2\, d\|T\| & \leq \int_{C_{\rho_1}}d_P^2\, d\|T\| + \sum_{i=0}^k\int_{C_{2^{-i}\rho_2}\setminus\overline{C_{2^{-i-1}\rho_2}}}d_P^2\, d\|T\|\\
        & \leq \int_{C_{\rho_1}}d_P^2\, d\|T\| + \sum^k_{i=0}(2^{-i}\rho_2)^nH_{T,P}(2^{-i}\rho_2)\\
        & \leq \int_{C_{\rho_1}}d_P^2\, d\|T\| + \sum_{i=0}^k (2^{-i}\rho_2/\rho_1)^{2N_2+c\eps^{2\gamma}}(2^{-i}\rho_2)^nH_{T,P}(\rho_1)\\
        & \leq \left(1+4\sum^\infty_{i=0}(2^{-i})^{2N_2+c\eps^{2\gamma}+n}\right)(\rho_2/\rho_1)^{2N_2+c\eps^{2\gamma}+n}\int_{C_{\rho_1}}d_P^2\, d\|T\|\\
        & \leq \frac{8}{1-2^{-n}}(\rho_2/\rho_1)^{2N_2+c\eps^{2\gamma}+n}\int_{C_{\rho_1}}d_P^2\, d\|T\|,
    \end{align*}
    which completes the proof.
\end{proof}

\section{A refined $L^2-L^\infty$ estimate for area-minimizers}\label{app:general-estimate}

Krummel--Wickramasekera have communicated to us that in their forthcoming work \cite{KW26c, KW26d} they establish refined $L^2-L^\infty$ estimates that generalize Theorem \ref{thm:improved-Linfty-L2} to area-minimizing currents close to disjoint unions of $C^2$ graphs defined over a plane or, more generally, over a center manifold.  While these generalizations are not required in the present work, for the reader's convenience we record here the planar version of their theorem (we refer the reader to \cite[Theorem 4.19]{KW26d} for the center manifold version).

Fix integers $1\leq p \leq s$, constants $\eps\in (0,1)$, $\kappa\in [0,\infty)$, and write $P = \R^n\times\{0^m\}\subset\R^{n+m}$. Consider functions $\phi:B_1\to \cA_s(\R^k)$ given by $\phi = \sum^p_{i=1}s_i\llbracket \phi_i\rrbracket$, where $s_1,\dotsc,s_p\in \N$ satisfy $\sum_{i=1}^ps_i=s$ and $\phi_i:B_1\to \R^m$ are $C^2$ such that $\|D\phi_i\|_\infty+\|\Delta\phi_i\|_\infty<\eps$ and $\sup|D\phi_i-D\phi_j|\leq \kappa\inf|\phi_i-\phi_j|$ for all $i,j$.

\begin{theorem}[Improved $L^2-L^\infty$ estimate; {\cite[Theorem 4.19]{KW26c}}]
    Fix $Q\in \N$ and integers $1\leq p\leq s$. Then, for all $\gamma\in (0,1)$ and $\kappa\in [0,\infty)$, there exists $\eps = \eps(n,m,Q,s,\gamma,\kappa)\in (0,1)$ such that if $T$ is an $n$-dimensional area-minimizing integral current in $C_1(0)\equiv B^n_1(0)\times\R^k$ such that
    \[
    (\del T)\res C_1(0) = 0, \quad \sup_{x\in \spt\|T\|}\dist(x,P)<\infty,\quad \pi_{P\#}T = Q\llbracket B_1\rrbracket,
    \]
    \[
    \cE\equiv \cE_{T,P}(0,1) < \eps^2,
    \]
    and if $\phi$ is as above for these choices of $s,p,\kappa,\eps$, we then have
    \[
    \sup_{x\in C_\gamma(0)\cap \spt\|T\|}\dist(x, \graph(\phi)) \leq cE^{1/2}_*
    \]
    where
    \[
    E_* = \int_{C_1}\dist^2(x,\graph(\phi))\, d\|T\|(x) + \sum^s_{\ell=1}\|D\phi_\ell\|_{\infty}\cE + \sum^s_{\ell=1}\|\Delta\phi_\ell\|^2_\infty + \cE^{1+\gamma}.
    \]
    Here, $c = c(n,m,Q,s,\gamma)\in (0,\infty)$ and $\gamma = \gamma(n,m,Q)\in (0,1)$.
\end{theorem}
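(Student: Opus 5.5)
The plan is to reduce, scale by scale, to the parallel-planes estimate of Theorem~\ref{thm:improved-Linfty-L2}. The errors that come from the graphs of $\phi$ not being exactly planes are absorbed into the extra terms of $E_*$. Fix $x_0\in \spt\|T\|\cap C_\gamma(0)$. We want $\dist(x_0,\graph(\phi))\leq cE_*^{1/2}$.

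\emph{Step 1 (choice of scale and clustering).} At $\xi_0=\pi_P(x_0)$ we group the sheets $\phi_1,\dots,\phi_p$ into clusters. Two sheets lie in the same cluster at scale $\sigma$ if $|\phi_i(\xi_0)-\phi_j(\xi_0)|\leq \tau\sigma$, for a small $\tau$ to be fixed. The separation hypothesis $\sup|D\phi_i-D\phi_j|\leq\kappa\inf|\phi_i-\phi_j|$ gives the following. On $B_\sigma(\xi_0)$ with $\sigma\leq (2\kappa)^{-1}$, distinct clusters stay separated by a fixed fraction of their separation at $\xi_0$. Moreover, each $\phi_i$ differs from its affine approximation $\ell_i(\xi)=\phi_i(\xi_0)+D\phi_i(\xi_0)(\xi-\xi_0)$ by at most $c\sigma^2\|\Delta\phi_i\|_\infty$ up to harmonic corrections, which we control by interior estimates using $\|D\phi_i\|_\infty+\|\Delta\phi_i\|_\infty<\eps$. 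We then pick a scale $\sigma\in[c(\kappa,\gamma)^{-1},1-\gamma]$ at which the cluster structure is stable across a dyadic range. This is possible because there are at most $p$ sheets and hence at most $p$ jumps.

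\emph{Step 2 (tilting and applying the planar estimate).} In $C_\sigma(\xi_0)$ we apply the linear map $(\xi,y)\mapsto(\xi,y-D\phi_{i_0}(\xi_0)\xi)$, where $i_0$ is any sheet in the cluster nearest $x_0$. Since the $D\phi_i$ within a cluster differ by at most $\kappa\tau\sigma$, after this map all sheets of every cluster are within $c(\tau\sigma^2+\sigma^2\|\Delta\phi\|_\infty)$ of parallel translates of $P$. The pushed-forward current is area-minimizing for a metric that is $C\|D\phi\|$-close to Euclidean. Its $L^2$ height to this union $\bm\pi$ of parallel planes is bounded by $c\int\dist^2(\cdot,\graph\phi)\,d\|T\|+c\|\Delta\phi\|_\infty^2$ plus the tilt cross term. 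Expanding $|\vec T-\vec P_{D\phi}|^2$ shows that this cross term is bounded by $c\|D\phi\|_\infty\cE$; this is exactly where the term $\sum\|D\phi_\ell\|_\infty\cE$ in $E_*$ comes from. The mass and projection hypotheses of Theorem~\ref{thm:improved-Linfty-L2} follow from $\cE<\eps^2$ and Lemma~\ref{lem:zero-mass}. Rather than appealing to a varying-metric version, we would run its proof directly: Almgren's Lipschitz approximation (Theorem~\ref{thm:Lipschitz-approx}), whose non-graphical error contributes $\cE^{1+\gamma}$, together with a subharmonicity/mean-value argument for $\dist^2(\cdot,\bm\pi)$ near each plane. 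This yields $\dist(x_0,\bm\pi)\leq cE_*^{1/2}$. Since $\dist(\bm\pi,\graph\phi)\lesssim \sigma^2\|\Delta\phi\|_\infty$ near $x_0$, we obtain the conclusion.

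\emph{Main obstacle.} The delicate step is Step~1 combined with the constant in Step~2. The constant must not degenerate as sheets approach one another. Clusters that are close relative to $\sigma$ must be treated as a single multiplicity plane, while well-separated clusters must be split apart, as in the decomposition \eqref{eqn:sheeting-a1}. I would first try an induction on $p$. In the base case $p=1$, the tilted estimate holds directly. For the inductive step, either all sheets lie within $\tau\sigma$ of each other, and we treat them as one plane at cost $\tau^2\sigma^2$, which is absorbed because it is comparable to the $L^2$ distance (mass is forced near each sheet as in the nonvanishing argument for $T_{i,j}$ in Lemma~\ref{lem:freq-sheeting}). Or some gap exceeds $\tau\sigma$, in which case Theorem~\ref{thm:improved-Linfty-L2} separates $T$ into pieces, each near fewer sheets, and we apply the induction hypothesis to each piece with $Q$ replaced by the corresponding projected multiplicity.
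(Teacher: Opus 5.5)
\emph{Note:} the paper gives no proof of this statement. It quotes it from Krummel--Wickramasekera's forthcoming work \cite{KW26c} and says it is not used, so there is nothing in the paper to compare with. Judged on its own, your reduction to Theorem~\ref{thm:improved-Linfty-L2} has a genuine gap, already in the base case $p=1$.

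\textbf{Main gap: curvature of the sheets is not controlled by $E_*$.} To apply the parallel-planes estimate in $C_\sigma(\xi_0)$, you need the $L^2$ distance from $T$ to the affine approximations $\ell_i$. This is bounded only by $2\int\dist^2(\cdot,\graph\phi)\,d\|T\|$ plus a term of size $\sigma^n\sup_{B_\sigma(\xi_0)}|\phi_i-\ell_i|^2\sim\sigma^{n+4}|D^2\phi_i|^2$. That involves the full Hessian, not $\Delta\phi_i$. Interior estimates for the harmonic part of $\phi_i$ only bound its Hessian by $\|D\phi_i\|_\infty$. But $E_*$ contains neither $\|D^2\phi\|_\infty^2$ nor $\|D\phi\|_\infty^2$; it contains only $\|D\phi\|_\infty\cE$, which can be much smaller.

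Concretely, take $n=2$, $m=1$, $Q=p=s=1$. Let $T=\llbracket\graph u\rrbracket$, where $u$ is the minimal graph over $B_1$ with boundary values $t(x_1^2-x_2^2)$, and let $\phi=u+t^2$. Then $\cE\approx t^2$, $\|D\phi\|_\infty\cE\approx t^3$, $\|\Delta\phi\|_\infty\lesssim|Du|^2|D^2u|\lesssim t^3$, and $\int\dist^2(\cdot,\graph\phi)\,d\|T\|\approx t^4$. Hence $E_*^{1/2}\lesssim t^{1+\min\{\gamma,1/2\}}$. However, the $L^2$ distance in $C_\sigma(\xi_0)$ from $T$ to the tangent plane of $\graph\phi$ over $\xi_0$ is of order $\sigma^n(t^4+\sigma^4t^2)$. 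Your chain therefore yields only $\dist(x_0,\graph\phi)\lesssim t^2+\sigma^2t$. With $\sigma\geq c(\kappa,\gamma)^{-1}$ this is not $O(E_*^{1/2})$. Shrinking $\sigma$ does not help, because the global $L^2$ term then carries a factor $\sigma^{-n/2}$. So the affine reduction can at best prove the estimate with an extra $\|D^2\phi\|_\infty^2$ in $E_*$.

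The missing idea is to compare $T$ with the \emph{curved} sheets directly. One route is a first-variation/Caccioppoli argument with test fields built from $y-\phi_i(\xi)$ and localized near each sheet. Another is to analyse the difference between Almgren's Lipschitz approximation and the $\phi_i$. In either case, after integrating by parts the only second-order quantity left is the equation defect $\Delta\phi_i$. The $D\phi_i$ terms pair with $\pi_T-\pi_P$ to give $\|D\phi\|_\infty\cE$, and the non-graphical set gives $\cE^{1+\gamma}$. For $Q=1$ this is just $\Delta(u-\phi)=Du\star Du\star D^2u-\Delta\phi$ together with the elliptic $L^2$--$L^\infty$ bound, which is exactly the information your reduction discards.

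\textbf{The clustering step is also not justified as written.}
\begin{itemize}
\item Merging a cluster of width up to $\tau\sigma$ into one plane costs an $L^\infty$ error of order $\tau\sigma$, a fixed constant rather than $O(E_*^{1/2})$.
\item The appeal to the nonvanishing argument of Lemma~\ref{lem:freq-sheeting} runs in the wrong direction, because $E_*$ only sees the one-sided distance from $T$ to $\graph\phi$. For a unit vector $e\perp P$, the pair $T=\llbracket P\rrbracket+\llbracket de+P\rrbracket$, $\phi=\llbracket 0\rrbracket+\llbracket de\rrbracket$ with $0<d\le\tau\sigma$ has $E_*=0$, while merging costs $d$. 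In general $T$ need not carry mass near every sheet at all, since nothing ties $s$ to $Q$.
\item Merging is not what is needed anyway: Theorem~\ref{thm:improved-Linfty-L2} already tolerates arbitrarily close parallel planes. The real issue inside a cluster is non-parallelism. Over $C_\sigma$ it produces deviations of order $\kappa\sigma|\phi_i-\phi_j|$, which are small relative to the gap but not relative to $E_*^{1/2}$.
\item Splitting $T$ via Theorem~\ref{thm:improved-Linfty-L2}, as in Lemma~\ref{lem:freq-sheeting}, requires the $L^2$ distance to be small compared with the gap. So any clustering threshold must be measured against $E_*^{1/2}$, not against $\tau\sigma$.
\end{itemize}

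Finally, the origin of the $\|D\phi\|_\infty\cE$ term is asserted rather than derived. A shear of slope $O(\eps)$ changes the $L^2$ height only by a factor $1+O(\eps)$ and creates no tilt cross term there.
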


\bibliographystyle{alpha}
\bibliography{bibliography}

\end{document}